\numberwithin{equation}{section}
\theoremstyle{plain}
\newcommand{\noi}{\noindent}
\newcommand{\beq}{\begin{eqnarray*}}
\newcommand{\eeq}{\end{eqnarray*}}
\newcommand{\beqn}{\begin{eqnarray}}
\newcommand{\eeqn}{\end{eqnarray}}
\newcommand{\var}{{\rm var}}
\newcommand{\bi}{\begin{itemize}}
\newcommand{\ei}{\end{itemize}}
\newcommand{\be}{\begin{equation}}
\newcommand{\ee}{\end{equation}}
\newcommand{\nn}{\nonumber}
\newcommand{\con}{\mbox{const.}}
\newcommand{\PI}{\rm{PI}}
\newcommand{\ISE}{\mbox{ISE}}
\newcommand{\ignore}[1]{}{}
\newcommand{\sn}{\sum_{i=1}^n}
\newcommand{\s}{\sqrt}
\newcommand{\br}{\mathbb{R}}
\newcommand{\LL}{{\rm LL}}
\newcommand{\NW}{{\rm NW}}
\newcommand{\argmin}{\mathop{\rm arg\min}}
\numberwithin{equation}{section}
\theoremstyle{plain}
\newtheorem{theorem}{Theorem}[section]
\newtheorem{remark}{Remark}[section]
\def\T{^{{\rm T}}}
\begin{document}

\begin{frontmatter}
\title{Nonparametric covariate-adjusted regression}
\runtitle{Covariate-adjusted regression}

\begin{aug}
  \author{\fnms{Aurore}  \snm{Delaigle}\corref{}\thanksref{t2}\ead[label=e1]{A.Delaigle@ms.unimelb.edu.au}},
  \author{\fnms{Peter} \snm{Hall}\thanksref{t2}\ead[label=e2]{halpstat@ms.unimelb.edu.au}}
\and
  \author{\fnms{Wen-Xin}  \snm{Zhou}\corref{}\thanksref{t2}\ead[label=e3]{wenxinz@princeton.edu}}

  \thankstext{t2}{Research supported by grants and fellowships from the Australian Research Council.}

  \runauthor{A. Delaigle, P. Hall and W.-X. Zhou}

  \affiliation{University of Melbourne and Princeton University}

 \address{School of Mathematics and Statistics \\ 
 	University of Melbourne \\ 
 	Parkville, Victoria 3010 \\
    Australia \\ 
\printead{e1}\\
\phantom{E-mail:\ }\printead*{e2}\\
\phantom{E-mail:\ }\printead*{e3}}

 \address{Department of Operations Research \\
			  and Financial Engineering \\
			   Princeton University \\
			   Princeton, New Jersey 08544 \\
			   USA \\  \printead{e3}}

\end{aug}

\begin{abstract}
We consider nonparametric estimation of a regression curve when the data are observed with multiplicative distortion which depends on an observed confounding variable. We suggest several estimators, ranging from a relatively simple one that relies on restrictive assumptions usually made in the literature, to a sophisticated piecewise approach that involves reconstructing a smooth curve from an estimator of a constant multiple of its absolute value, and which can be applied in much more general scenarios. We show that, although our nonparametric estimators are constructed from predictors of the unobserved undistorted data, they have the same first order asymptotic properties as the standard estimators that could be computed if the undistorted data were available. We illustrate the good numerical performance of our methods on both simulated and real datasets.
\end{abstract}


\begin{keyword}
\kwd{discontinuities}
\kwd{local linear estimator}
\kwd{multiplicative distortion}
\kwd{Nadaraya-Watson estimator}
\kwd{nonparametric smoothing}
\kwd{predictors}
\end{keyword}

\end{frontmatter}

\pagenumbering{arabic}

\section{Introduction}
We consider nonparametric estimation of a regression curve $m(x)=E(Y|X=x)$ when $X$ and $Y$ are observed with multiplicative distortion induced by an observed confounder~$U$. 
Specifically, we observe $\tilde{X}$, $\tilde{Y}$ and $U$, where  $\tilde{Y} = \psi(U)\, Y$, $\tilde{X} = \varphi(U)\, X$, $\psi$ and $\varphi$ are unknown functions and $U$ is independent of $X$ and $Y$.
This model  is known as a covariate-adjusted regression model. It was introduced by \c{S}ent\"urk and M\"uller~(2005a) to generalize an approach commonly employed in medical studies, where
the effect of a confounder $U$, for example body mass index, is often removed by dividing by $U$. Motivated by the fibrinogen data on haemodialysis patients, where $\tilde Y$ was fibrogen level, $\tilde X$ was serum transferrin level, and $U$ was body mass index, \c{S}ent\"urk and M\"uller~(2005a) pointed that although it is often reasonable to assume that the effect of $U$ is multiplicative, it does not need to be proportional to $U$, and a more flexible model is obtained by allowing for distortions represented by the functions $\varphi$ and $\psi$. More generally, this model is useful to describe the relationship between variables that are influenced by a confounding variable, and see if this relationship still exists once the effect of the confounder has been removed.

A number of authors have suggested estimators of the curve $m$ in various parametric settings. 
Linear regression models were considered by \c{S}ent\"urk and M\"uller~(2005a, 2006) and \c{S}ent\"urk and Nguyen~(2006), who generalized them to varying coefficient models (\c{S}ent\"urk, ~2006) and generalized linear models (\c{S}ent\"urk and M\"uller,~2009).
A more general nonlinear regression model was suggested by Cui et al.~(2009) and Zhang et al.~(2012), and in Zhang et al.~(2013), the authors considered a partially linear model, where the linear part is observed with multiplicative distortions.

In this work, we propose more flexible nonparametric estimators of the regression function $m$, which not only relax the parametric assumptions imposed in the existing literature, but also significantly weaken some of the strong assumptions on the curves $\varphi$ and $\psi$ and on the distribution of the data made by previous authors. In particular, we propose estimators which, unlike in the previous studies, can be applied if $EX$ and $EY$ vanish, and even if the functions $\psi$ and $\varphi$ are not strictly positive. Our procedures involve estimating the functions $\varphi$ and $\psi$, deduce from there  predictors of $X$ and $Y$, and construct nonparametric estimators of $m$ using those predictors. 
We show that, under the restrictive assumptions made in the existing literature, this is relatively straightforward to do, whereas under the much weaker assumptions we also consider, we need to use a sophisticated approach.

This paper is organized as follows. We describe the covariate-adjusted model and discuss the model restrictions in the existing literature in Section~\ref{sec:model}. We propose several nonparametric estimators in Section~\ref{np.estimation.sec}, ranging from the most basic estimators which can be applied under similar restrictions as those imposed in the existing literature, to the most sophisticated ones which rely on much milder assumptions. We derive theoretical properties of our estimators in Section~\ref{theo.sec}, where we show that they have the same first order asymptotic properties as the nonparametric estimators that could be computed if $X$ and $Y$ were observed directly. More surprisingly, in some particular cases, our new estimators can even achieve faster convergence rates than the standard estimators based on direct observations from $(X,Y)$. 
We discuss practical implementation of our methods in Section~\ref{sec:implem}, where we also investigate their performance on simulated data, and apply them to analyze two real datasets studied in \c{S}ent\"urk and M\"uller~(2005b) and \c{S}ent\"urk and Nguyen~(2006). We discuss multivariate extensions in Section~\ref{sec:multivariate}. Our proofs are provided in Section~\ref{sec:proofs} and in a supplementary file.

\section{Model and data}\label{sec:model}

We observe independent and identically distributed (i.i.d.) triplets $\{ (\tilde{X}_i, \tilde{Y}_i, U_i)\}_{i=1}^n$ generated by the covariate-adjusted model of \c{S}ent\"urk and M\"uller~(2005a), where
\begin{equation} 
	 Y  = m(X) + \sigma(X) \, \varepsilon,  \  \    
    \tilde{Y} = \psi(U) \, Y,   \ \ 
      \tilde{X} = \varphi(U) X , 
      \label{basic.model}
\end{equation}
with  $m(x) = E( Y| X=x)$ an unknown regression curve that we wish to estimate nonparametrically, $\sigma^2(x)=\var(Y|X=x)$ an unknown variance function, and $\varphi$ and $\psi$ unknown smooth functions. The random variables $U, X$ and $\varepsilon$ are mutually independent, $Y$ and $U$ are independent, $E(\varepsilon)=0$ and $\var(\varepsilon)=1$. We use $f_X$ and $f_U$ to denote the densities of $X$ and $U$, respectively. As in \c{S}ent\"urk and M\"uller~(2005), to make the problem identifiable, we assume that
\be 
	E \{ \varphi(U) \} = E\{ \psi(U) \} = 1.  \label{SM.cond1}
\ee
In other words, on average there is no distorting effect, which is similar to the standard condition imposed in the related classical measurement error problems (Carroll and Hall,~1988; Fan and Truong,~1993), where one observes $W=X+U$ with $X$ and $U$ independent, and the measurement error $U$ is assumed to have zero mean.

As mentioned in the introduction, several parametric estimators of  $m$ have been suggested in the literature. There, it is commonly assumed that
\be \label{positivity.ass}
(a)\  \varphi(u), \, \psi(u) >0    \ \mbox{ for all } u\in I_U\,,
(b)\  \mbox{$E (X)\neq 0$~~and~~$E(Y) \neq 0$\,,}
\ee
where $I_U\equiv  [u_L, u_R]$ denotes the compact support of $U$. Without loss of generality, we assume that $I_U=[0,1]$ throughout the paper. 

An approach used by some authors is based on constructing predictors of the $(X_i,Y_i)$'s, which can be obtained from the data $(\tilde X_i,\tilde Y_i,U_i)$, $i=1,\ldots,n$, on noting that
\be
	\varphi_0(U_i)\equiv  E(  \tilde{X}_i  | U_i) = \varphi(U_i)\,E (X) ,  \psi_0(U_i)\equiv E(  \tilde Y_i | U_i ) = \psi(U_i)\,E (Y). \label{equiv.mod.ass}  
\ee
Now, $\varphi$ and $\psi$ can easily be estimated nonparametrically, say by $\hat\varphi$ and $\hat\psi$, which motivates Cui et al.'s~(2009) predictors
$\hat Y_i =  \{ \hat \psi(U_i) \}^{-1} \tilde Y_i$ and $\hat X_i = \{ \hat \varphi(U_i)\}^{-1}  \tilde X_i$, and shows that \eqref{positivity.ass} is  needed by those authors to avoid dividing by zero. 
In the next section, we shall see that it is possible to construct consistent nonparametric estimators of $m$, and that this can be done under much less restrictive conditions than \eqref{positivity.ass}.

\section{Methodology}
\label{np.estimation.sec}
\subsection{Different methods under different conditions}\label{sec:methods}

The parametric methods developed in the  literature crucially rely  on   assumption \eqref{positivity.ass}, and the examples considered there
are always such that $\varphi$, $\psi$, $EX$ and $EY$ are far from zero. We wish to construct nonparametric estimators of $m$ that are  consistent even if those assumptions do not hold. 
Let $\mathbf{e}_1=(1,0)\T$, and, for any pairs of random variables $(Q,R)$ and $(Q_i,R_i)$, $i=1,\ldots,n$, let  
${\mathbf{S}}_{Q,n}(x;K,h)= n^{-1}\sn K_h(Q_i-x) \mathbf{w}\{h^{-1}(Q_i-x)\} \mathbf{w}\{h^{-1}(Q_i - x)\}\T\in \br^{2\times 2}$ and ${\mathbf{T}}_{Q,R,n}(x;K,h)=n^{-1}\sn R_i K_h(Q_i-x)\mathbf{w}\{h^{-1}(Q_i-x)\}$, with $\mathbf{w}(s)=(1,s)\T$ and where $K$ is a kernel function, $h=h_n>0$ is a bandwidth and, for every $t\in \br$, $K_h(t)=h^{-1}K(t/h)$. 

If the $(X_i,Y_i)$'s were available, we could estimate $m(x)$ nonparametrically by a standard local polynomial estimator constructed from the $(X_i, Y_i)$'s, the two most popular versions of which are the Nadaraya-Watson and the local linear estimators, defined~by
\be 
	\tilde  m_{\NW}(x) =\frac{ \sn  Y_i K_h(x-X_i)    }{\sn K_h(x-X_i)} \, , \,  \tilde  m_{\LL}(x)   =     \mathbf{e}_1\T {\mathbf{S}}_{X,n}^{-1}(x;K,h) {\mathbf{T}}_{X,Y,n}(x;K,h), \label{standardLP}
\ee
respectively. In our case, the $(X_i,Y_i)$'s are not observed and these standard estimators cannot be computed. We develop new nonparametric estimators that can be computed from the $(\tilde X_i,\tilde Y_i,U_i)$'s, and whose complexity depends on whether \eqref{positivity.ass}(a) and (b) are satisfied or not.
The simplest situation is the one where \eqref{positivity.ass}(a) holds. There, we can estimate $m$ by standard nonparametric estimators based on predictors of the $(X_i,Y_i)$'s that are similar to, but less restrictive than, those used by Cui et al.~(2009); see Section~\ref{one-step}.
The case where we do not assume \eqref{positivity.ass}(a) requires more elaborate techniques:  
in Section~\ref{two-stage.estiamtion.sec}, we suggest a method that can be used when \eqref{positivity.ass}(b) is satisfied;  
we handle the most general case in Section~\ref{sec:fourth}, where we develop a sophisticated method which is valid regardless of whether \eqref{positivity.ass}(a) and (b) hold or not. It involves computing estimators of unknown constant multiples of $|\varphi|$ and $|\psi|$, estimate the zeros of those functions, construct piecewise estimators of unknown constant multiples of $\varphi$ and $\psi$, estimate these constants and finally deduce estimators of $\varphi$ and $\psi$.

\subsection{Basic method}\label{one-step}
We start by deriving simple nonparametric estimators of $m$ that can be computed when \eqref{positivity.ass}(a) holds, and which form the basis of the more sophisticated methods we introduce in the subsequent sections. The idea is similar to the one used in the parametric context by Cui at al.~(2009):  replace the unobserved $(X_i,Y_i)$'s by predictors $(\hat X_i,\hat Y_i)$.
Under~\eqref{positivity.ass}, motivated by \eqref{equiv.mod.ass} and  since $EX=E\tilde X$ and $E Y=E\tilde Y$, Cui et al.~(2009) 
take $\hat Y_i =  \{ \hat \psi(U_i) \}^{-1} \tilde Y_i$   and $\hat X_i = \{ \hat \varphi(U_i)\}^{-1}  \tilde X_i$, where $\hat\varphi$ and $\hat\psi$ denote  Nadaraya-Watson estimators of $ \varphi_0$ and $\psi_0$, divided by, respectively,  $\widehat {EX} =n^{-1} \sum_{i=1}^n \tilde{X}_i$ and $\widehat {EY} =  n^{-1} \sn \tilde Y_i$. 

It is because of this division that Cui at al.~(2009)  assume \eqref{positivity.ass}(b), but the latter can be avoided and replaced by $E|X|, E|Y|\neq0$ (which holds for all non-degenerate random variables), by better exploiting \eqref{positivity.ass}(a). 
Specifically, under \eqref{positivity.ass}(a), $|\psi|=\psi$, $|\varphi|=\varphi$,  and
\be
	\varphi_0^+(U_i)\equiv  E\big(  |\tilde{X}_i|   \big|   U_i\big) = \varphi(U_i)\,E |X|\, ,   \psi_0^+(U_i)\equiv E\big(  |\tilde Y_i | \big| U_i \big) = \psi(U_i)\,E |Y|\,. 	\label{equiv.mod.assB}  
\ee
Motivated by this, we propose to estimate $\psi$ and $\varphi$ by
\be 
  \hat \varphi_{\LL}(u) =  \hat{\varphi}_{0, \LL}^+(u)/\widehat{ E|X|} \  \mbox{ and }  \   \hat \psi_{\LL}(u) = \hat{\psi}_{0, \LL}^+(u)/\widehat{ E|Y|}\,,   \label{hatphi1}
\ee
where $\widehat{ E|X|} =n^{-1} \sum_{i=1}^n |\tilde{X}_i|$, $\widehat{ E|Y|} =  n^{-1} \sn |\tilde Y_i|$, and where 
$
	 \hat  \varphi_{0,\LL}^+(u)   =    \mathbf{e}_1\T \mathbf{S}_{U,n}^{-1}(u;L,g_1) \allowbreak\mathbf{T}_{U,|\tilde{X}|,n}(u;L,g_1)$ and $\hat \psi_{0,\LL}^+(u)    =    \mathbf{e}_1\T \mathbf{S}_{U,n}^{-1}(u;L,g_2) \mathbf{T}_{U,|\tilde{Y}|,n}(u;\allowbreak L,g_2)  \label{phi.LL.est}
$
are local linear estimators of $\varphi_0^+$ and $\psi_0^+$ computed with a kernel function $L$ and bandwidths $g_1$ and $g_2$.

Then, we predict $Y_i$ and $X_i$ by taking
 \be 
 	\hat Y_i =  \{ \hat \psi_{\LL}(U_i) \}^{-1} \tilde Y_i    \quad  \mbox{ and } \quad \hat X_i = \{ \hat \varphi_{\LL}(U_i)\}^{-1}  \tilde X_i \,. \label{generated.response.predictor}
 \ee 
Finally, replacing $(X_i,Y_i)$ by $(\hat X_i,\hat Y_i)$ in \eqref{standardLP}, 
we obtain the following estimators of $m(x)$:
\be
\hat  m_{\NW}(x) =\frac{ \sn \hat Y_i  K_h(x-\hat X_i)   }{\sn K_h(x-\hat{X}_i)} , 
 \hat  m_{\LL}(x)   =     \mathbf{e}_1\T {\mathbf{S}}_{\hat X,n}^{-1}(x;K,h) {\mathbf{T}}_{\hat X,\hat Y,n}(x;K,h)\,.  \label{LL.m} 
\ee

\begin{remark}\label{rem:simpler}
{\rm
Using  $E(\tilde Y_i |X_i)=E(Y_i |X_i)=m(X_i)$, simpler estimators of $m$ can also be defined  by  
$	\hat m_{\NW,0}(x) = \sn  \tilde Y_i  K_h(x-\hat X_i)   /K_h(x-\hat X_i)$  and $\hat  m_{\LL,0}(x)   =     \mathbf{e}_1\T {{\mathbf{S}}}_{\hat X,n}^{-1}(x;K,h) \allowbreak {\mathbf{T}}_{\hat X,\tilde Y,n}(x;K,h). 
$ 
Since they require predicting only the $X_i$'s, these estimators seem more attractive than those in \eqref{LL.m}. However, it can be proved that their asymptotic ``variance'' is larger than that of the estimators in \eqref{LL.m}.  Moreover, they cannot be adapted simply to the case where $\varphi$ does not satisfy \eqref{positivity.ass}(a); see Remark~\ref{rem:simpler_fail} in Section~\ref{two-stage.estiamtion.sec}.
}
\end{remark}

\subsection{Refined procedure}\label{two-stage.estiamtion.sec}
As their parametric counterparts developed in the covariate-adjusted literature, the methods introduced in Section~\ref{one-step} can only be computed if \eqref{positivity.ass}(a) holds.  However, in practice, there is no  reason why $\varphi$ and $\psi$ would always be positive, and even if they are, their estimators may vanish or get close to zero, which can cause numerical problems.
In this section, we suggest a refined approach which can overcome these difficulties when \eqref{positivity.ass}(b) holds. The more complex case where \eqref{positivity.ass}(b) is violated will be dealt with in Section~\ref{sec:fourth}.

As in Section~\ref{one-step}, to estimate $m$, the first step is to construct predictors $\hat X_i$ and $\hat Y_i$, and thus estimators of $\varphi$ and $\psi$. Recall the notation in \eqref{equiv.mod.ass}. Since we assume \eqref{positivity.ass}(b) but not \eqref{positivity.ass}(a), instead of \eqref{hatphi1} we  take
$\hat Y_i =  \{ \hat \psi_{\LL}(U_i) \}^{-1} \tilde Y_i$   and $\hat X_i = \{ \hat \varphi_{\LL}(U_i)\}^{-1}  \tilde X_i$, where
$\hat \varphi_{\LL}(u) =  \hat{\varphi}_{0, \LL}(u)/\widehat {EX}$ and $\hat \psi_{\LL}(u) = \hat{\psi}_{0, \LL}(u)/\widehat {EY}$,
and the local linear estimators
\begin{align}
	 &\hat  \varphi_{0,\LL}(u)   =    \mathbf{e}_1\T \mathbf{S}_{U,n}^{-1}(u;L,g_1) \mathbf{T}_{U,\tilde{X},n}(u;L,g_1)\, ,\notag\\
	 &\hat \psi_{0,\LL}(u)    =    \mathbf{e}_1\T \mathbf{S}_{U,n}^{-1}(u;L,g_2) \mathbf{T}_{U,\tilde{Y},n}(u;L,g_2)  \label{phi.LL.est2}
\end{align}
of $\varphi_0$ and $\psi_0$ computed with a kernel function $L$ and bandwidths $g_1$ and~$g_2$.

To derive consistent estimators of $m$ without imposing \eqref{positivity.ass}(a), recall that, for each $i$, $X_i$ and $Y_i$ are independent of $U_i$. As a consequence, for any subset $\cal S\subseteq \br$, we have $E(Y_i|X_i=x,U_i\in {\cal S})=E(Y_i|X_i=x)$. In particular, if $X_i$, $Y_i$, $\varphi$ and $\psi$ were known, then letting
$ \mathcal{C}_n(\rho_1,\rho_2) = \{ 1\leq i\leq n:  |\varphi_0 (U_i)|  \geq \rho_1, |\psi_0(U_i)| \geq \rho_2 \}$, with $\rho_1 , \rho_2>0$ denoting two  small numbers, the following modification of $\tilde m_\NW(x)$ at \eqref{standardLP} would be  consistent: 
$$
	 \tilde{m}_\NW(x;\rho_1,\rho_2) = {\sum_{i \in   {\mathcal{C}}_n(\rho_1,\rho_2 )}  Y_i K_h(x-X_i) }\big   /  {\sum_{i \in   {\mathcal{C}}_n(\rho_1,\rho_2 )} K_h(x-X_i) }\,,
$$
and a similar consistent version $\tilde{m}_\LL(x;\rho_1,\rho_2)$ of $\tilde{m}_\LL(x)$  at \eqref{standardLP} could be constructed by replacing, in the definition of $\tilde{m}_\LL(x)$, sums over all $i$ by sums over $i \in   {\mathcal{C}}_n(\rho_1,\rho_2 )$ as above. The advantage of this approach is that it enables us to exclude  the data for which $\psi(U_i)$ or $\varphi(U_i)$ are small, and thus it can be applied even if \eqref{positivity.ass}(a) does not hold.

Motivated by this discussion, in the case that interests us, where $X_i$, $Y_i$, $\varphi$ and $\psi$ are unknown, we suggest estimating $m$ as follows. First, let
$\hat{\mathcal{C}}_n(\rho_1,\rho_2) = \big\{ i =1,\ldots, n:  |\hat{ \varphi}_{0, \LL}(U_i)|  \geq \rho_1, \ |\hat{ \psi}_{0, \LL}(U_i)| \geq \rho_2    \big\}\,.$ (The choice of $\rho_1$ and $\rho_2$ will be discussed in Section~\ref{sec:implem}.) We define a Nadaraya-Watson estimator of $m(x)$, valid even if  \eqref{positivity.ass}(a) does not hold, by
\be 
	\hat m_\NW(x;  \rho_1,\rho_2 ) = {\sum_{i \in  \hat{\mathcal{C}}_n(\rho_1,\rho_2 )} \hat Y_i K_h(x-\hat X_i) }\big /{\sum_{ i\in   \hat{\mathcal{C}}_n (\rho_1,\rho_2)}  K_h(x-\hat X_i) } \,.  	\label{two-step.est.m}
\ee 
Similarly, we define a local linear estimator  $\hat m_\LL(x;  \rho_1,\rho_2 )$ in the same way as $\hat m_\LL$ in \eqref{LL.m}, replacing there, and in the definitions of  ${\mathbf{S}}_{\hat X,n}(x;K,h)$ and  ${\mathbf{T}}_{\hat X,\hat Y,n}(x;K,h)$, the indices $i=1,\ldots,n$ by  the indices $i \in  \hat{\mathcal{C}}_n (\rho_1,\rho_2 )$.

\begin{remark}\label{rem:removepoints}
{\rm
While we shall prove in Section~\ref{theo.sec} that these estimators are consistent and have the same first order asymptotic properties as their counterparts at \eqref{standardLP} based on undistorted data,
in practice performance can be further improved by excluding a small fraction (say  5\%) of the observations corresponding to the $U_i$'s such that a kernel density estimator $\hat f_U(U_i)$ of $f_U(U_i)$ is the smallest. (Indeed, we know from standard
properties of kernel regression estimators that, at points $u$ where $f_U(u)$ is small, $\hat\varphi(u)$ and $\hat\psi(u)$ are more variable.) Doing this corresponds to enlarging the set ${\cal S}$ slightly, which does not affect consistency and convergence rates, again due to the fact that the $U_i$'s are independent of the $(X_i,Y_i)$'s, 
}
\end{remark}

\begin{remark}\label{rem:simpler_fail}
{\rm
It is not possible to directly use this approach to modify the estimator discussed in Remark~\ref{rem:simpler} for the case where $\varphi$ has zeros, because $\tilde Y_i$ and $U_i$ are dependent. Particularly, we note that in general $E(\tilde Y_i|X_i=x,U_i\in {\cal S})$ and $E(\tilde Y_i|X_i=x)$ are not equal. 
}
\end{remark}

\subsection{Elaborate procedure for the most general case}\label{sec:fourth}
Finally we construct estimators of $m$ that rely on neither part of \eqref{positivity.ass}. As before, we start by deriving  predictors of the $(X_i,Y_i)$'s. 
Constructing predictors $\hat X_i$ (resp., $\hat Y_i$) without assuming  \eqref{positivity.ass} requires  to derive an estimator of $\varphi$ (resp., $\psi$) without this assumption, which, unlike the methods used in the previous sections, turns out to be a challenging task.
Our procedure is based on the fact that, from \eqref{basic.model},
$\varphi^*(u)\equiv E( |\tilde X|\, | U=u ) =|\varphi(u)|\, E|X|$ (resp., $\psi^*(u)\equiv E( |\tilde Y|  \,|U \allowbreak =u ) =|\psi(u)|\, E|Y|$)\,,
which implies that we can estimate $\varphi^*$ (resp., $\psi^*$) by a standard local linear estimator $\hat\varphi^*_\LL$  (resp., $\hat\psi^*_\LL$) with kernel $L$ and bandwidth $g_1$ (resp., $g_2$) constructed from the $(U_i,|\tilde X_i|)$'s (resp.,  the $(U_i,|\tilde Y_i|)$'s). In what follows, we explain how to deduce an estimator of $\varphi$ from $\hat\varphi^*_\LL$. The same procedure can be applied to derive an estimator of $\psi$ from $\hat\psi^*_\LL$.

Since $\varphi^*$ is proportional to $|\varphi|$, to extract an estimator of $\varphi$ from $\hat\varphi^*_\LL$, we need to estimate the zeros of $\varphi$, say $\tau_1,\ldots , \tau_M$ for some finite $M$,  at which $\varphi$ changes sign.
To do this we assume that, for each $j$, $\varphi''(\tau_j)\neq 0$. Then, it is straightforward to see that the  first derivative of $\varphi^*$ has jump discontinuities at the $\tau_j$'s. Moreover, the zeros of $\varphi$ coincide with those of $\varphi^*$, so that, at the $\tau_j$'s,  $\varphi^*$ reaches its minimum value, 0. Therefore, the $\tau_j$'s can be estimated using procedures for detecting discontinuities in derivatives of a regression curve, such as those in Gijbels et al.~(1999) and Gijbels and Goderniaux~(2005), combined with the fact that the $\hat\tau_j$'s need to correspond to local minima of $\hat\varphi^*_\LL$; see Section~\ref{sec:param} for details of implementation.  For $j=1,\ldots,M$, let $\hat\tau_j$ denote the resulting estimator of $\tau_j$, and let  $I_0=(-\infty,\hat\tau_1)$, $I_M=[\hat\tau_M,\infty)$, and, for $j=1,\ldots,M-1$, $I_j=[\hat\tau_j, \hat\tau_{j+1})$.

Our next target is to construct an estimator of $\varphi$. Recall the notation $\varphi_0^+= \varphi\cdot E |X|$ in \eqref{equiv.mod.assB}. Recalling that $\varphi$ changes sign at each $\tau_j$, we can obtain a consistent estimator of either $\varphi_0^+$ or $-\varphi_0^+$ (we'll see below how to distinguish these two cases) by taking
$\hat\varphi_{\pm,0}^+(x)=\sum_{j=0}^M (-1)^j \,\hat\varphi^*_{j,\LL}(x)\cdot I(x\in I_j)\,,$
where, for each $j$,  $\hat\varphi^*_{j,\LL}$ denotes the local linear estimator of $\varphi^*$ constructed using only the $(U_i,|\tilde X_i|)$'s for which $U_i\in I_j$. Here we use a different local estimator in each $I_j$ because, under our assumptions, the first derivative of $\varphi^*=|\varphi|\cdot E|X|$ is discontinuous at the $\tau_j$'s. It can be shown using standard kernel smoothing arguments that in this case the bias near the $\tau_j$'s is reduced by using this piecewise approach.

Our next step is to extract from  $\hat\varphi_{\pm,0}^+$ an estimator of $\varphi_0^+$ (recall that $\hat\varphi_{\pm,0}^+$ is an estimator of $\varphi_0^+$ or  $-\varphi_0^+$, but we can't know of which one). To do this, recall that $E\{\varphi(U)\}=1$, which implies that $E\{\varphi_0^+(U)\}>0$. This fact motivates us to estimate $\varphi_0^+(x)$ by
$
\hat\varphi_0^+(x)=\hat\varphi_{\pm,0}^+(x) \, \big/ \,\textrm{sign} \big\{\sum_{i=1}^n\hat\varphi_{\pm,0}^+(U_i)\big\}.
$
Since $\varphi_0^+(x)=\varphi(x)\, E|X|$, once we have done this, to estimate $\varphi$ it remains to construct an estimator of $E|X|$. 

Noting that $E\{\varphi_0^+(U)\}=E\{\varphi(U)\}\, E|X|=E|X|$,  we can estimate $E|X|$ by
$\widehat{E|X|}=n^{-1}\sum_{i=1}^n \hat\varphi_0^+(U_i)=\big| n^{-1}\sum_{i=1}^n \hat\varphi_{\pm,0}^+(U_i) \big|$. Finally we estimate $\varphi(x)$ by
$\hat\varphi(x)=\hat\varphi_0^+(x) \, / \widehat{E|X|}\,.$
Then, we can predict the $X_i$'s  by taking  $\hat X_i = \{ \hat \varphi(U_i)\}^{-1}  \tilde X_i$. We can proceed similarly to construct predictors $\hat Y_i$ of the $Y_i$'s.
As in Section~\ref{two-stage.estiamtion.sec}, since, to obtain these predictors, we divide by $\hat\varphi(U_i)$ and $\hat\psi(U_i)$, when constructing our estimator of $m$ we cannot use the $(\hat X_i,\hat Y_i)$'s for which $|\hat\varphi(U_i)|$ or $|\hat\psi(U_i)|$ is too small. Therefore, to estimate $m$  we use the  estimators $\hat m_\NW(x;  \rho_1,\rho_2 )$   and  $\hat m_\LL(x;  \rho_1,\rho_2 )$ defined in Section~\ref{two-stage.estiamtion.sec}, but with the predictors $\hat X_i$ and $\hat Y_i$ constructed above.

\section{Theoretical properties} \label{theo.sec}
\setcounter{equation}{0}

We start by establishing theoretical properties of the estimators $\hat m_{\NW}$ and $\hat m_{\LL}$ from Section~\ref{one-step}.
While these estimators seem intuitively natural, because they are computed using variables  obtained through nonparametric prediction, checking whether they are consistent, and deriving detailed asymptotic properties, are quite difficult. Recently, Mammen et al.~(2012) gave a deep account of nonparametric estimators computed from nonparametrically generated covariates, but our estimators do not fall into the class of settings they consider, not least because in our case, not only the covariate $X$, but also the dependent variable $Y$, are nonparametrically generated, which makes the problem even more complex than theirs. In addition to the basic model assumptions introduced in the first paragraph of Section~\ref{sec:model}, we make the following regularity assumptions:

\begin{itemize}
\baselineskip=14pt
\item[(B1)] $ E|X| \neq 0$, $E|Y| \neq 0$ and 
$
\inf_{u\in I_U }  \varphi(u)>0 , \, \inf_{u\in I_U } \psi(u) >0. $
\\[-.5cm]
 
\item[(B2)] $0<   \inf_{u\in I_U}f_U(u) \leq \sup_{u\in I_U}f_U(u)  <\infty$;  $f_U$, $\varphi$ and $\psi$ are twice differentiable, and their second derivatives are uniformly continuous and bounded.\\[-.5cm]

\item[(B3)] (a) $f_X$ is continuous, $\sup_{x \in \br} f_X(x)<\infty$, and $E \{ \exp(c_1 |X|) \}<\infty$ for some constant $c_1>0$; (b) $m$ and $f_X$ are twice differentiable and their second derivatives are uniformly continuous and bounded; (c) $\sigma$ is continuous and bounded.\\[-.5cm]

\item[(B4)] $E(\varepsilon)=0$, $E(\varepsilon^2)=1$ and $E\{ \exp(c_2 |\varepsilon|) \} < \infty$ for some $c_2>0$.\\[-.5cm]

\item[(B5)] $K$ and $L$ are twice continuously differentiable, symmetric density functions, and are compactly supported on $[-1,1]$. Moreover,  $ \int_0^1 t^2 L(t)\, dt  >  2 \{ \int_0^1  t L(t)\, dt   \}^2$. \\[-.5cm]

\item[(B6)] The bandwidths $(h, g_1 , g_2  )=(h_n , g_{1n}, g_{2n} )$ are such that $h \asymp n^{-\alpha_0}$ and $g_1 \asymp n^{-\beta_1}$ and $g_2\asymp n^{-\beta_2}$ for some $0<\alpha_0, \beta_1, \beta_2 < 1/3$.

\end{itemize}

Condition (B1) is a relaxed version of assumption \eqref{positivity.ass} often assumed in the covariate-adjusted regression literature. See, for example, \c{S}ent\"urk and M\"uller~(2005a, 2006) and Cui et al.~(2009). Condition (B2) includes standard regularity and smoothness assumptions for the asymptotic results of kernel-type nonparametric regression estimation. In (B3), we relax the conventional boundedness condition on the covariates used by \c{S}ent\"urk and M\"uller~(2005a, 2006) and Mammen et al.~(2012), and assume instead that $X$ has a finite exponential moment (for example this is satisfied if the distribution of $X$ comes from the exponential family or is compactly supported). Condition (B4), which requires exponentially light tails of $\varepsilon$, is similar in spirit to Assumption~1. (iv) in Mammen et al.~(2012). Like them, we need this technical assumption to employ an argument based on empirical processes.
Condition (B5) is standard in the context of kernel regression, and is easy to satisfy since we can choose the kernels. Condition (B6) states the required range of magnitude of the bandwidths, and is easy to satisfy in practice.

The next two theorems  establish uniform consistency and asymptotic normality of our estimators $\hat m_{\NW}$ and $\hat m_{\LL}$ defined in Section~\ref{one-step}. Their proof can be found in Section~\ref{sec:proofs} and in Section~D in the supplementary file.

\begin{theorem}\label{thm.1}
Assume that \eqref{SM.cond1} and Conditions~\rm{(B1)--(B6)} hold and let $[a,b] \subseteq I_X \equiv \{x: f_X(x)>~0\}$.
\begin{itemize} 
\item[(i)] 
If $h \asymp g_1 \asymp g_2 \asymp(\log n)^{1/5} n^{-1/5}$, then $\hat{m}_{\NW}$ at \eqref{LL.m} satisfies
$
  \max_{x\in [a,b]}\allowbreak |\hat m_{\NW}(x) - m(x)  | = O_P \{ (\log n)^{2/5} n^{-2/5}  \} . 
$
\item[(ii)] 

If $ \beta_1 \geq 1/5$ and $0<\alpha_0 <  1/2 - \beta_1$, then for every $x\in [a,b]$,
\begin{align}
	 \hat{m}_{\NW}(x) - m(x) =\sqrt{ V(x)  }  \,N(x) + B_{0}(x) + \tilde{B}(x) + R_0(x),  \label{AN.mNW} 
\end{align}
where $N(x)   \xrightarrow {\mathscr{D}} N(0,1)$ as $n\rightarrow \infty$, 
$ V(x)  =  \{ nh f_X(x) \}^{-1} \sigma^2(x)\int K^2$, 
$B_{0}(x)  =    \{m''(x) + 2m'(x) f'_X(x)/f_X(x)  \} \mu_{K,2} \, h^2/2,$ 
$\tilde B(x)=\tilde B_\varphi(x)+\tilde B_\psi(x)$ with
$\tilde{B}_\varphi(x) =   x m'(x) \allowbreak E\{\varphi''(U)/\varphi(U)\}\mu_{L,2} \, g_1^2 /2$, 
$\tilde B_\psi(x)= -m(x) \allowbreak E\{\psi''(U)/\psi(U) \} \mu_{L,2} \, g_2^2 /2,$ 
and the remainder $R_0$ is such that $|R_0(x)| =  o_P\{ g_1^2+g_2^2+ h^2+ (nh)^{-1/2} \}$.

\end{itemize}
\end{theorem}

\begin{theorem} \label{thm.2}
Assume that \eqref{SM.cond1} and Conditions~\rm{(B1)--(B6)} hold and let $[a,b] \subseteq I_X$. 
\begin{itemize}
\item[(i)] 
If $h \asymp g_1 \asymp g_2 \asymp(\log n)^{1/5} n^{-1/5}$, then  $\hat{m}_{\LL}$ at \eqref{LL.m} satisfies
$
  \max_{x\in [a,b]} \allowbreak | \hat m_{\LL}(x) - m(x)  | =O_P \{ (\log n)^{2/5} n^{-2/5}  \}. 
$
\item[(ii)] If $\beta_1 \geq 1/5$ and $0<\alpha_0<1/2-\beta_1$, then for every $x\in [a,b]$,
\begin{align}
 \hat{m}_{\LL}(x) - m(x) =\sqrt{ V(x)  }  \,N(x) + B_1(x) + \tilde{B}(x) + R_1(x),  \label{AN.mLL}
\end{align}
where $N(x)   \xrightarrow {\mathscr{D}} N(0,1)$ as $n\rightarrow \infty$, $   B_1(x)  =  m''(x) \mu_{K,2} \, h^2/2 , 
$ $V$ and $\tilde{B}$ are as in part (ii) of Theorem~\ref{thm.1}, 
and $R_1$ is such that $|R_1(x)| =  o_P\{ g_1^2+g_2^2+ h^2+ (nh)^{-1/2} \}$.
\end{itemize}
\end{theorem}

We deduce from the theorems that, although they are constructed from distorted data, when computed with appropriate bandwidths, our estimators  $\hat m_{\NW}$ and $\hat m_{\LL}$ defined in Section~\ref{one-step} have the same uniform convergence rates as the standard estimators  in \eqref{standardLP} used when the $(X_i,Y_i)$'s are available. This  contrasts with the errors-in-variables models studied by Fan and Truong~(1993) and Delaigle et al.~(2009), where convergence rates  are significantly degraded by the measurement errors.
The conclusions arising from the asymptotic distribution of our estimators are also  interesting. Abusing terminology, we refer to  $V$ and $B_0+\tilde B$ (resp., $B_1+\tilde B$) as the asymptotic variance and bias and  of our estimator  $\hat m_{\NW}$ (resp., $\hat m_{\LL}$), and we call asymptotic mean squared error (AMSE) the sum of the asymptotic variance and  squared bias.  We use similar terminology for the  standard estimators of~$m$.

We learn from part (ii) of both theorems that, if we choose $g_1$ and $g_2$ of order $o(h)$, the asymptotic bias and variance of our estimators are identical to those of standard estimators, and there, as in the standard case,  it is optimal to take $h\asymp n^{-1/5}$, so that ${\rm AMSE}\asymp n^{-4/5}$. 
Perhaps more surprisingly, in cases where  $B_0$ (resp., $B_1$  for $\hat m_{\LL}$), $B_\varphi$ and $B_\psi$ do not all have the same sign, it is possible to choose $h$ and $g_1$ or $g_2$ an order of magnitude slightly larger than $n^{-1/5}$ such that the asymptotic bias $B_0+\tilde B$ (resp., $B_1+\tilde B$) vanishes and the  AMSE our estimator is of order $o(n^{-4/5})$, thus smaller than the AMSE of the standard estimator (similar results can be established for the integrated AMSE). However, while it is theoretically interesting, we were not able to  exploit this result in practice to make our estimator outperform the standard one, despite several attempts. In part this is because to benefit from this result we need  to choose the bandwidths in a very specialized way that requires estimating too many unknowns, and we found that the simpler bandwidths choice suggested in Section~\ref{sec:param} almost always worked better.

Next, we develop theoretical properties of our estimator defined in Section~\ref{two-stage.estiamtion.sec}.
We start by rewriting $ \hat{\mathcal{C}}_n(\rho_1,\rho_2)$ as $\hat{\mathcal{C}}_n(\rho_1,\rho_2)   = \{ 1\leq i\leq n: U_i \in \hat{\mathcal{L}}_n(\rho_1,\rho_2)  \}$, where $\hat{\mathcal{L}}_n(\rho_1,\rho_2) = \{ u\in I_U :  |\hat{\varphi}_{0, \LL}(u)|  \geq \rho_1, |\hat{\psi}_{0, \LL}(u)| \geq \rho_2  \}$. We can rewrite the estimator at \eqref{two-step.est.m} as
\beq 
	\hat m_\NW(x;  \rho_1,\rho_2 ) = \frac{\sn \hat Y_i  K_h(x-\hat X_i) I\{U_i \in \hat{\mathcal{L}}_n(\rho_1,\rho_2)\} }{\sn K_h(x-\hat X_i) I\{U_i \in \hat{\mathcal{L}}_n(\rho_1,\rho_2)\}} \,.
\eeq

To emphasize the main idea while avoiding repetitive arguments, here we present the theoretical result only  for this estimator, assuming that only $\varphi$ may have zeros, and therefore we take $\rho_2=0$ throughout this section. A straightforward adaptation of the arguments used to prove Theorem~\ref{two-stage.thm} below leads to similar results in the more general case where $\varphi$ has zeros and $\rho_2>0$, and for the local linear estimator $\hat m_\LL(x;  \rho_1,\rho_2 )$.

When $\rho_2=0$,   $\hat{\mathcal{L}}_n(\rho_1,\rho_2)$ depends only on $\rho_1$; to simplify notation we rewrite it as  $\hat{\mathcal{L}}_n(\rho_1)= \{ u\in I_U :  |\hat{\varphi}_{0, \LL}(u)|  \geq \rho_1  \}$. Likewise,  we rewrite $\hat m_\NW(x;\rho_1,0) $ as $\hat m_\NW(x;\rho_1)$.
Under certain regularity conditions on $\varphi$, the random set $\hat{\mathcal{L}}_n(\rho_1)$ is a consistent estimator of $	\mathcal{L}( \rho_1)  = \{  u \in I_U  : |\varphi_0(u)  |  \geq \rho_1\}$. Recalling that $\varphi_0(u)=E(X)  \, \varphi(u)$, this  suggests taking $\rho_1$ to be some value between $0$ and $M_0 \equiv  |E(X)| \max_{u\in I_U} | \varphi(u)|$. For $0\leq t\leq M_0$, let $\partial \mathcal{L}(t) = \{ u\in I_U : |\varphi_0(u)| = t \}$. We will need the following assumptions:
\begin{itemize}
\baselineskip=14pt
\item[(C1)] $E(X) , E(Y)  \neq 0$ and  
$
	\inf_{u\in I_U }    \psi(u) >0 .
$\\[-.5cm]

\item[(C2)]  $\varphi $ is such that the set $ \Theta =  \big\{ t\in (0,M_0) :   \partial \mathcal{L}(t)$ consists of finitely many points located in the interior of $I_U$ and $\min_{u \in  \partial \mathcal{L}(t) } |\varphi'(u)| >0  \big\} $ is non-empty.

\end{itemize}

The next theorem establishes uniform consistency and asymptotic normality  of $\hat m_\NW(x;\rho )$.
See Section~E in the supplementary file for its proof.

\begin{theorem} \label{two-stage.thm}
Assume that \eqref{SM.cond1}, Conditions~\rm{(B2)--(B5), (C1) and (C2)} hold and that $\rho \in  ( 0, M_0 )$ in \eqref{two-step.est.m} is such that $\rho  \in \Theta$. Let $[a,b]\subseteq I_X$.
\begin{itemize}
\item[(i)] 
If $ g_1\asymp g_2 \asymp h \asymp  (\log n)^{1/5}n^{-1/5}$, then $\hat m_\NW(x;\rho ) \equiv \hat m_\NW(x;\rho , 0 )$ at \eqref{two-step.est.m} satisfies $\max_{x\in [a,b]}  | \hat m_\NW(x;\rho )  -m(x)  | = O_P\{ (\log n)^{2/5}  n^{-2/5} \} .$

\item[(ii)] If $\beta_1 \geq 1/5$ and $0<\alpha_0<1/2-\beta_1$, then for every $x\in [a,b]$,
\begin{equation}
\hat m_\NW(x;\rho )  - m(x) =\sqrt{ V(x; \rho)  }  \,N(x) + B_0(x ) + \tilde{B}(x;\rho) + R_2(x;\rho),  \label{2nd.AN}
\end{equation}
where $N(x)   \xrightarrow {\mathscr{D}} N(0,1)$ as $n\rightarrow \infty$, $V(x;\rho) = V(x)/P\{ U \in \mathcal{L}(\rho) \}$, $V, B_0$ are as in part (ii) of Theorem~\ref{thm.1},
$\tilde{B}(x;\rho)   =   x m'(x)  E[\varphi''(U )\allowbreak I\{  U \in \mathcal{L}(\rho)\}/\varphi(U)   ]\mu_{L,2} \, g_1^2 /2 
  -  m(x) E[ \psi''(U)  I\{ U \in \mathcal{L}(\rho) \} /\psi(U) ] \mu_{L,2} \,\allowbreak g_2^2 /2 , 
$ and $R_2$ is such that $|R_2(x; \rho ) | =  o_P\{ g_1^2+g_2^2+ h^2+ (nh)^{-1/2} \}$.
\end{itemize}
\end{theorem}

We deduce from the theorem that our estimator defined in Section~\ref{two-stage.estiamtion.sec} has the same uniform convergence rate as the standard Nadaraya-Watson estimator  in \eqref{standardLP}, used when the data $(X_i,Y_i)$ are available. Moreover, as long as we choose $g_1$ and $g_2$ of order $o(h)$, the asymptotic ``bias'' and ``variance'' of our estimator from Section~\ref{two-stage.estiamtion.sec} are equal to those of the standard Nadaraya-Watson estimator, where $i\in  \{ 1\leq j\leq n: U_j \in{\mathcal{L}}_n(\rho)\}$.  As we already indicated below Theorems~\ref{thm.1} and \ref{thm.2}, in theory in some cases it is possible to choose the bandwidths in such a way that the AMSE of our estimator tends to zero faster than that of the standard estimator, but it seems very hard to find a way to exploit this in practice. Similar results can be established for the local linear estimator $\hat m_\LL(x;  \rho_1,\rho_2 )$.

Establishing theoretical results for the more general procedure described in Section~\ref{sec:fourth} is particularly challenging. Recall that this method combines a change point detection algorithm and the ridge-parameter based method introduced in Section~\ref{two-stage.estiamtion.sec}. The complex nature of this approach implies that deriving its theoretical properties rigorously requires long and tedious arguments. Since our paper is already very long, and even the proofs for our simpler methods are fairly tedious, we leave such rigorous derivations for future work. However, our preliminary calculations already indicate that the procedure from Section~\ref{sec:fourth} should have asymptotic properties similar to those described  in Theorem~\ref{two-stage.thm}. In particular, these calculations indicate that estimating the $\tau_j$'s and the sign of $\varphi$ and/or $\psi$ has no first order asymptotic effect on the properties of our estimators of $m$.

\section{Numerical results}\label{sec:implem}
\subsection{Which method to use}\label{sec:method}
The approach in Section~\ref{sec:fourth} can be applied in essentially all cases, but since the methods from Sections~\ref{one-step} and \ref{two-stage.estiamtion.sec} are simpler, the user might prefer to use these if all parts of \eqref{positivity.ass} hold. While \eqref{positivity.ass} can be verified by standard tests of hypothesis applied to the observed data (see Remark~\ref{Htest} below), when these conditions are needed, it is because the techniques employed involve dividing by estimators of $\psi$, $\varphi$, $EX$ or $EY$. Therefore, in practice, to avoid numerical issues, we suggest using the method from Section~\ref{two-stage.estiamtion.sec}, and to use instead the method from Section~\ref{sec:fourth} if the absolute values of estimators of $EX$ or $EY$ are small, the extent of which depends on the magnitude of other quantities involved and the precision of the software employed. This is generally rather easy to determine by examining the data, but if unsure the user can just apply the method of Section~\ref{sec:fourth}, which  is valid in the most general case.

We note too that one does not necessarily need to predict the $X_i$'s and the $Y_i$'s with the same method. For example, if one is confident that $EX$ is far from zero, but is not sure about $EY$, then  the $X_i$'s could be predicted using the approach from Section~\ref{two-stage.estiamtion.sec}, and the predictors of the $Y_i$'s could be obtained from the approach suggested in Section~\ref{sec:fourth}.

\begin{remark}\label{Htest}
{\rm The assumption at \eqref{positivity.ass} can be tested in several ways. For example, since $E\tilde X=EX$, we can first test the sign of $EX$ by a standard test of hypothesis for the mean applied to the data $\tilde X_1,\ldots,\tilde X_n$, and then test the sign of the function $\varphi_0=\varphi\cdot EX$ at \eqref{equiv.mod.ass}, using for example tests such as those in D\"umbgen and Spokoiny~(2001), Chetverikov~(2012) and Lee et al.~(2013), applied to the observed data. 
}
\end{remark}

\subsection{Details of implementation}\label{sec:param}
As in the case where the $(X_i,Y_i)$'s are available, in practice we recommend using the local linear versions of our estimators, and in this section we suggest ways of choosing the parameters required to compute them.  Similar ideas can be used for the Nadaraya-Watson estimators. 
We know from Section~\ref{theo.sec} that, while we have to choose $h$ with care, we have more flexibility for the bandwidths $g_1$ and $g_2$, which can take a large range of values. If we take $h$ to be of the standard size for nonparametric regression, and $g_1=o(h)$ and $g_2=o(h)$, then our estimators have the same first order asymptotic properties as the  estimators at~\eqref{standardLP}.

Motivated by this, for the estimators in Section~\ref{one-step}, we take $g_1=n^{-0.1} g_{1,\PI}$, $g_2=n^{-0.1} g_{2,\PI}$ and $h=h_{\PI}$, where the subscript $\PI$ means that we use a standard plug-in bandwidth for local linear estimators (Ruppert et al.,~1995) constructed based on, respectively, the data $(U_i,|\tilde X_i|)$, $(U_i,|\tilde Y_i|)$ and $(\hat X_i,\hat Y_i)$.
For the estimators in Sections~\ref{two-stage.estiamtion.sec} and \ref{sec:fourth},  we take $g_1=n^{-1/10} g_{1,\PI}$ and $g_2=n^{-1/10} g_{2,\PI}$, where  $g_{1,\PI}$ and $g_{2,\PI}$ denote standard plug-in bandwidths  for local linear estimators constructed based on, respectively, the data $(U_i,\tilde X_i)$ and  $(U_i,\tilde Y_i)$. Then, in Section~\ref{two-stage.estiamtion.sec}, we choose $\rho_1=\max(0.1,\rho_1^*)$ and $\rho_2=\max(0.1,\rho_2^*)$, where $\rho_1^*$ (resp., $\rho_2^*$) denotes the square root of an estimator of the asymptotic ``mean squared error'' of $\hat\varphi_\LL$ (resp., $\hat\psi_\LL$), integrated over the set of $x$-values where $|\hat\varphi_\LL(x)|$ (resp., $|\hat\psi_\LL(x)|$) take its smallest values; see Appendix~A in the supplementary file for details. 
We do the same for the method from Section~\ref{sec:fourth}, except that we use the estimators $\hat\varphi$ and $\hat\psi$ of $\varphi$ and $\psi$ derived there.
Finally, we take $h=h_{\PI}$, a standard plug-in bandwidth for local linear estimators computed from the data $(\hat X_i,\hat Y_i)$, $i\in \hat{\mathcal{C}}_n(\rho_1,\rho_2)$.

The estimators from Section~\ref{sec:fourth} also require to estimate the zeros $\tau_1,\ldots,\tau_M$ at which $\varphi$ changes sign, and the same is required for $\psi$ if the method in that section is used to compute predictors of the $Y_i$'s. We proceed as follows. First, since the $\tau_j$'s all correspond to a local minimum of $\varphi^*$, we find all the points at which $\hat\varphi^*_\LL$ has local minima. Then, among those points we keep only those which are close to the discontinuity points of the derivative $\varphi^*$ detected by the method of Gijbels and Goderniaux~(2005). Here we define ``close'' by less than $2h$ away, where $h$ is the bandwidth in Section~2.2.1 of Gijbels and Goderniaux~(2005).
Finally, to slightly improve numerical performance, we implement Remark~\ref{rem:removepoints} and remove the data corresponding to the 5\% smallest $\hat f_U(U_i)$'s.

\subsection{Simulations} \label{sim.sec}
We applied our methods to a variety of simulated examples, ranging from the simplest ones in which  $\psi>0 $ and $\varphi> 0$, where we can use the method from Section~\ref{one-step}, to more complex ones in which $EX=0$ and both $\psi$ and $\varphi$  oscillate between positive and  negative values, where  we need to use the  sophisticated approach suggested in Section~\ref{sec:fourth}.  

We generated data $(\tilde X_i,\tilde Y_i,U_i)$, $i=1,\ldots,n$, from model \eqref{basic.model} for $n=100$, $200$, $500$ and $1000$, and considered various combinations of $m$, $\varphi$, $\psi$ and $\sigma$, and  various distributions of $X_i$ and $U_i$. We took $\varepsilon_i\sim N(0,1)$, and considered shifted versions of three regression curves $m$, denoted by $m_1$, $m_2$ and $m_3$ and defined as
$m_1(x)=\sin\{\pi(x-1)/2\} / \big[{\{1+2(x-1)^2\} \{{\rm sign}(x-1)+1\}}\big]$,
$m_2(x)=x^2 \phi_{0,1}(x)$,
and $m_3(x)=2x+\phi_{0.5,0.1}(x)$,
where $\phi_{\mu,\theta}$ denotes the density of a $N(\mu,\theta^2)$.  In all cases below, the generic constant $\con$ was chosen so that $E\{\varphi(U)\}=E\{\psi(U)\}=1$.

First, we considered models where the local linear estimators from Sections~\ref{one-step} to \ref{sec:fourth} could all be applied:
(i.a) $m=m_1$, $X_i\sim N(1,1.5^2)$, $\sigma(x)=0.3$;
(ii.a) $m=m_2$, $X_i\sim N(1,1.5^2)$, $\sigma(x)=0.05$;
(iii.a) $m=m_3$, $X_i\sim N(0.5,0.75^2)$, $\sigma(x)=0.55$;
(i.b) $m(\cdot)=m_1(\cdot-1)+2$, $X_i\sim N(2,1.5^2)$, $\sigma(x)=0.3$;
(ii.b) $m(\cdot)=m_2(\cdot-1)$, $X_i\sim N(2,1.5^2)$, $\sigma(x)=0.05$;
(iii.b) $m(\cdot)=m_3(\cdot-1)$, $X_i\sim N(1.5,0.75^2)$, $\sigma(x)=0.55$.
Each time we took  $U_i\sim\beta(2,5)$, $\psi(u)=\con \,(u+0.5)^2$ and  $\varphi(u)=\con \,(u+0.25)^2$.

\begin{figure}[htbp]
  \centering
  \includegraphics[width=4.9in]{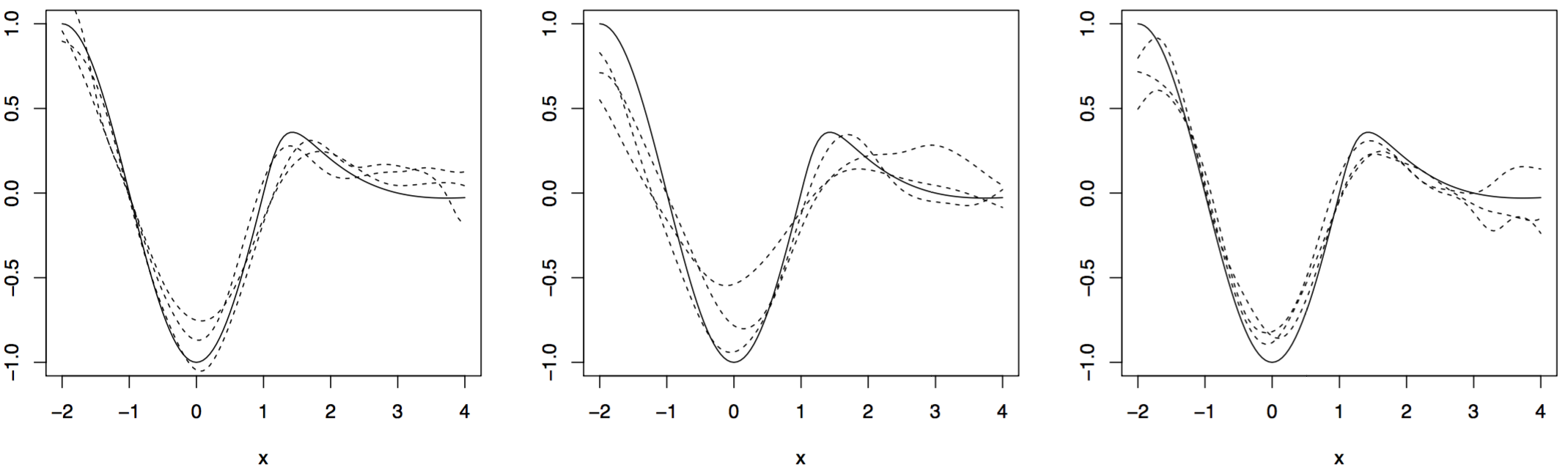}
 \caption{\small\baselineskip=12pt $\hat m_\LL$ from Section~\ref{one-step} (left), $\hat m_\LL(\cdot ; \rho_1,\rho_2)$ from Section~\ref{two-stage.estiamtion.sec} (center) and estimator  $\hat m_\LL(\cdot;\rho_1,\rho_2)$ from Section~\ref{sec:fourth} (right)  for three samples coming from model (i.a) with $n=200$, and corresponding to the  1st, 2nd and 3rd quartiles of the ISEs. The continuous line depicts the true~$m$.}\label{F:model2}
\end{figure}


Next, we considered models (i.c)--(iii.c) and (i.d)--(iii.d), where we took $m$, $X_i$ and $\sigma$ as in models (i.a)--(iii.a) and (i.b)--(iii.b), respectively, but took $U_i\sim \beta(3,5)$ and $\varphi(\cdot)=\psi(\cdot)=\con\,m_1(5\cdot-2)$. Here $\varphi$ and $\psi$ have zeros and change signs, so that the method from Section~\ref{one-step} cannot be applied.  
Finally, in our last models,  $\varphi$ and $\psi$ change signs and have several zeros and  $E(X_i)=0$, so that we can apply only the method from Section~\ref{sec:fourth}:
(iv.a) $m(\cdot)=m_1(\cdot+1)$, $X_i\sim N(0,1.5^2)$, $\sigma(x)=0.3$;
(v.a) $m(\cdot)=m_2(\cdot+1)$, $X_i\sim N(0,1.5^2)$, $\sigma(x)=0.05$;
(vi.a) $m(\cdot)=m_3(\cdot+0.5)$, $X_i\sim N(0,0.75^2)$, $\sigma(x)=0.55$;
(iv.b) $m(\cdot)=m_1(\cdot)$, $X_i\sim\{\chi^2(4)-4\}/2$, $\sigma(x)=0.3$;
(v.b) $m(\cdot)=m_2(\cdot)$, $X_i\sim\{\chi^2(4)-4\}/2$, $\sigma(x)=0.05$;
(vi.b) $m(\cdot)=m_3(\cdot)$, $X_i\sim\{\chi^2(4)-4\}/3.5$, $\sigma(x)=0.55$;
Each time we took $U_i\sim \beta(3,5)$ and $\varphi(\cdot)=\psi(\cdot)=\con\,m_1(5\cdot-2)$.
Heteroscedastic versions of these models gave similar results; see Appendix~B in the supplementary file.

\begin{figure}[htbp]
  \centering
  \includegraphics[width=4.9in]{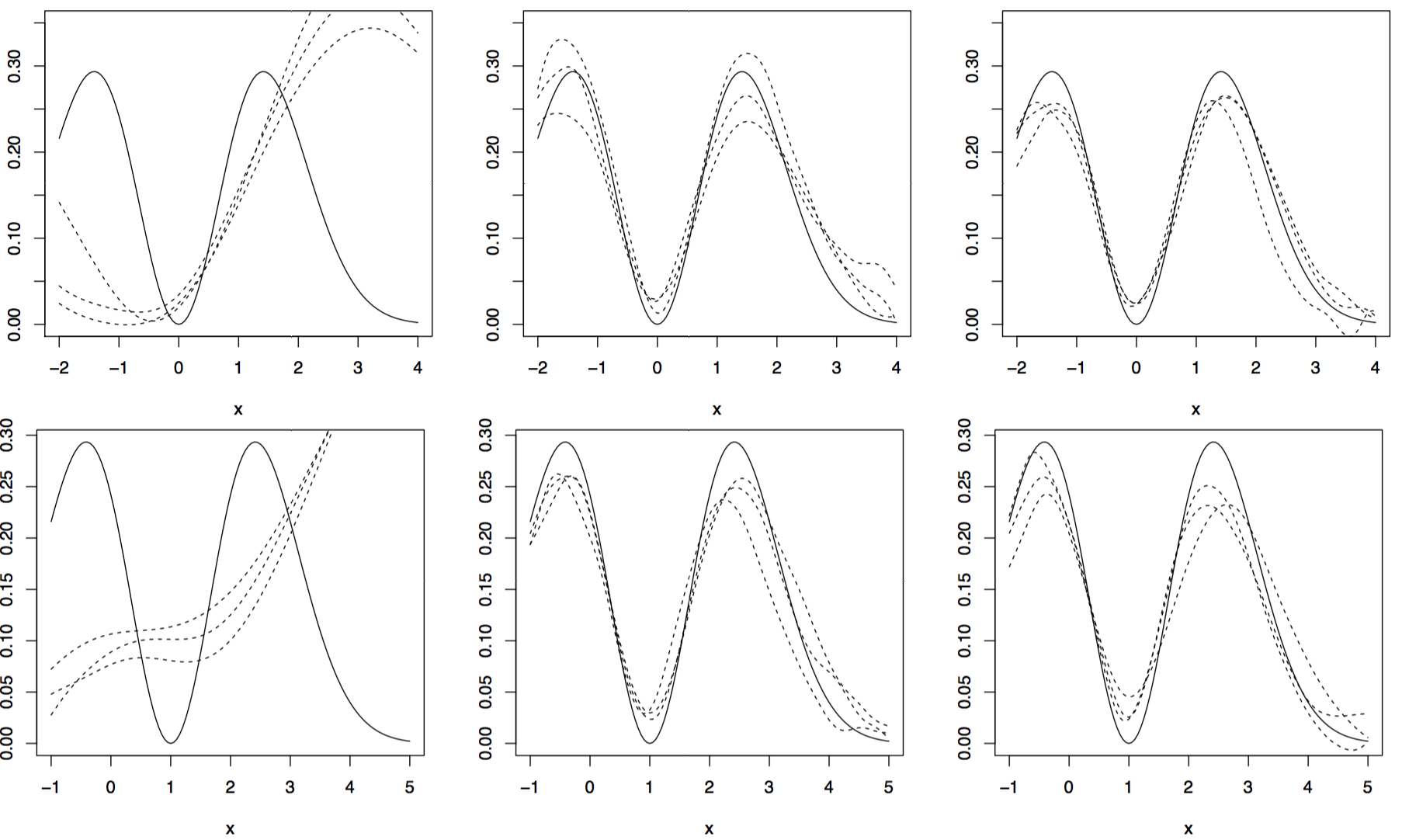}
 \caption{\small\baselineskip=12pt naive estimator $\hat m_{\LL,{\rm naive}}$ (left), $\hat m_\LL(\cdot;\rho_1,\rho_2)$ from Section~\ref{two-stage.estiamtion.sec} (center), and estimator $\hat m_\LL(\cdot;\rho_1,\rho_2)$ from Section~\ref{sec:fourth} (right) for three samples coming from model (ii.c) (top) and model (ii.d) (bottom)  with $n=500$, and corresponding to the 1st, 2nd and 3rd quartiles of the ISEs.  The continuous line depicts the true~$m$.}\label{F:model1}
\end{figure}


We compared each of our estimators with the ideal estimator $\tilde  m_{\LL}$ at \eqref{standardLP} computed from the $(X_i,Y_i)$'s, which are not available in real data applications but are available when we simulate data, and with the inconsistent naive estimator $\hat m_{\LL,{\rm naive}}$, which is the standard local linear estimator computed from the contaminated  $(\tilde X_i,\tilde Y_i)$'s.
For each $n$ and each model, we generated 1000 samples and constructed each estimator for each sample. Let $\hat m$ denote any one of the estimators considered below. To summarise the performance of $\hat m$, we computed, for each sample, the integrated squared error $\ISE=\int_a^b \{\hat m(x)-m(x)\}^2\,dx$, where, in each case, $a$ and $b$ were the quantiles $0.025$ and $0.975$ of the distribution of $X$.

In Tables 1 to 4 in Appendix~B in the supplementary file, for each method we report the first, second and third quartiles of the resulting 1000 ISEs. 
See Appendix~B for a detailed discussion of the simulation results. In summary, we found that, as expected, when $\varphi$, $\psi$, $EX$ and $EY$ were different from zero, but $EX$ and/or $EY$ were relatively close to zero, the estimator that worked best was the one from Section~\ref{one-step}, but the most complex estimator from Section~\ref{sec:fourth} worked well. When $EX$ and $EY$ were far from zero, all three estimators worked well, with the simplest one from Section~\ref{one-step} giving the best results and the one from Section~\ref{sec:fourth} working the worst. When  $\varphi$ and/or $\psi$ had zeros, the estimator from Section~\ref{one-step} could not be applied, and when $EX$ and $EY$ were close to zero, the best results were obtained with the estimator  from Section~\ref{sec:fourth}, whereas when $EX$ and $EY$ were far from zero, the estimator from Section~\ref{two-stage.estiamtion.sec} worked best. Finally, we found that our approach also performed well when the errors were heteroscedastic.

In all cases, our estimators  performed considerably better than the naive estimator, but were of course outperformed by the oracle estimator. As expected, the performance of our estimators improved as sample size increased.
In all our simulation settings, the estimator from Section~\ref{sec:fourth} gave reasonable results. However, if $\varphi$ and $\psi$ were far from zero, we got better results by using the simplest estimator from Section~\ref{one-step}, and if  $EX$ and $EY$ were far from zero, we got better results using the estimator from Section~\ref{two-stage.estiamtion.sec}.

\begin{figure}[htbp]
  \centering
  \includegraphics[width=4.8in]{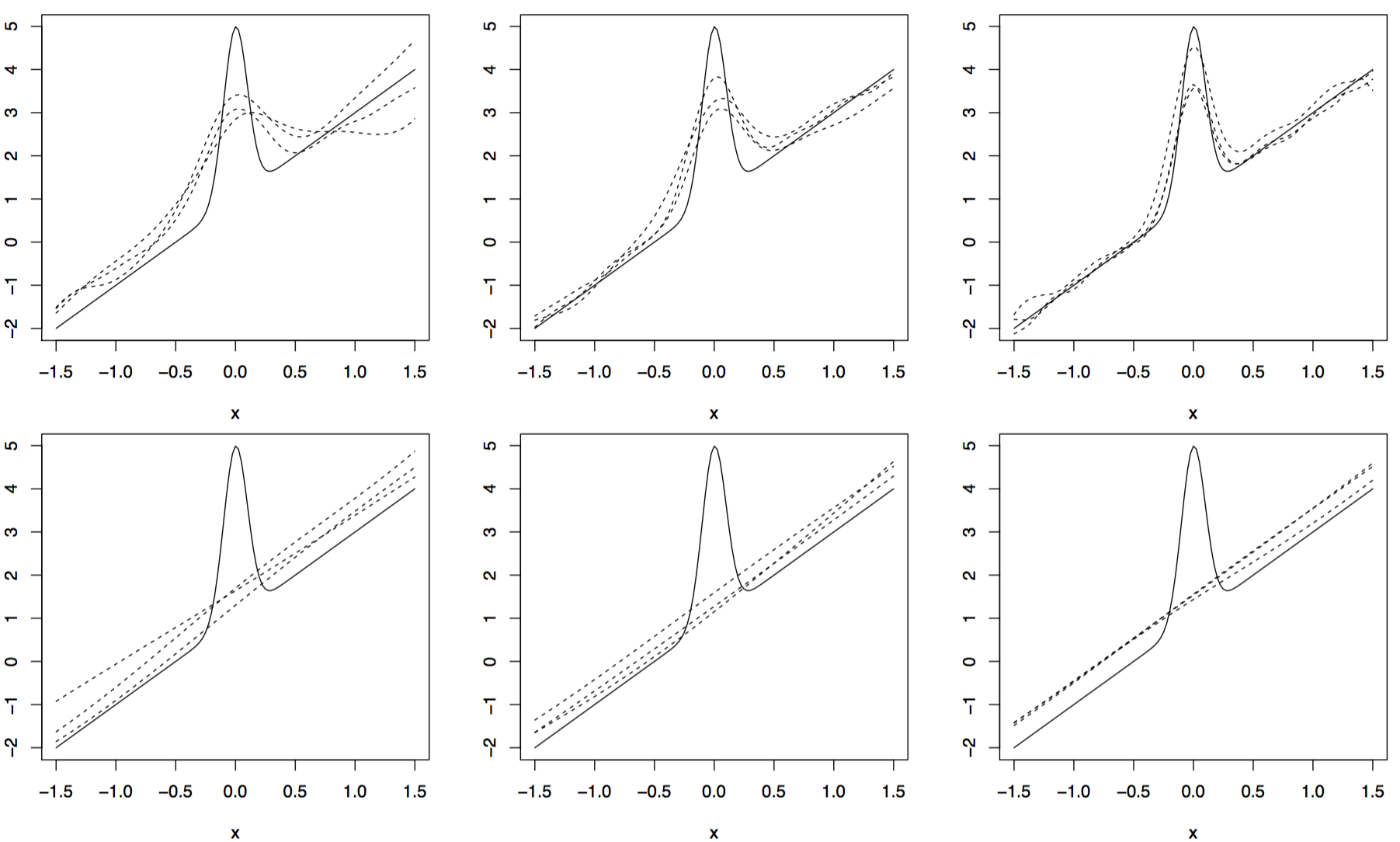}
 \caption{\small\baselineskip=12pt $\hat m_\LL(\cdot ; \rho_1,\rho_2)$ from Section~\ref{sec:fourth} (first row) and naive estimator $\hat m_{\LL,{\rm naive}}$ (second row) for three samples coming from model (vi.a)  with $n=100$ (left), $n=200$ (centre) and $n=500$ (right), and corresponding to the 1st, 2nd and 3rd quartiles of the ISEs. The continuous line depicts the true~$m$.}\label{F:model6}
\end{figure}


To illustrate these results graphically, we present a few figures that are representative of the conclusions of our simulations. For each estimator $\hat m$ presented in the figures, we show the three estimated curves corresponding to the first three quartiles of the 1000 ISEs defined above.
In Figure~\ref{F:model2}, using example (i.a), we illustrate the fact that, when all three methods can be applied, they often give similar results. Figure~\ref{F:model1} shows estimated curves for examples (ii.c) and (ii.d). We can see that, in case (ii.c), where $EX$ is close to zero, the estimator  $\hat m_\LL(\cdot ; \rho_1,\rho_2)$ from Section~\ref{sec:fourth} worked better than the one from Section~\ref{two-stage.estiamtion.sec}, but that the reverse is true in case (ii.d), where $EX$ and $EY$ are both far from zero. In that figure, we also depict the naive estimator $\hat m_{\LL,{\rm naive}}$, which performed very poorly. Finally, in Figure~\ref{F:model6}, we use example (vi.a) to demonstrate the improvement that our estimator $\hat m_\LL(\cdot ; \rho_1,\rho_2)$ from Section~\ref{sec:fourth} benefits from as the sample size $n$ increases. Here too, the naive estimator performed very poorly, even for $n$ large.

\subsection{Real data illustrations}

We applied our new method to the Boston house-price dataset described in Harrison and Rubinfeld~(1978), available at \verb+http://lib.stat.cmu.edu/datasets+, and which contains information about houses and their owners at 506 locations around Boston. 
As in \c{S}ent\"urk and M\"uller~(2005b), we are interested in the relationship between the median price (in USD 1000's) of houses, $\tilde Y$, and per capita crime rate by town, $\tilde X$, with the confounding effect of the proportion of population of lower educational status, $U$, removed. \c{S}ent\"urk and M\"uller's~(2005b), whose interest was in the correlation between $\tilde X$ and $\tilde Y$, concluded that this correlation alters dramatically after adjusting for the confounding effect of lower educational status. On the left panel of Figure~\ref{F:Realdata}, we depict the covariate-adjusted regression curve obtained using the local linear estimator $\hat m_\LL(\cdot;\rho_1,\rho_2)$ from Section~\ref{two-stage.estiamtion.sec}, the estimator $\hat m_\LL(\cdot;\rho_1,\rho_2)$ from Section~\ref{sec:fourth}, and the naive regression estimator $\hat m_{\LL,{\rm naive}}$ obtained by regressing $\tilde Y$ on $\tilde X$ after removing a few outliers. In this example, the estimator from Section~\ref{one-step} was identical to the one from Section~\ref{two-stage.estiamtion.sec}. 
We can see that $\hat m_{\LL,{\rm naive}}$ indicates a pronounced relationship between house price and crime rate (as crime rate increases, house price decreases), but once we adjust for the effect of lower educational status, the regression curve obtained by both versions of our estimator is almost flat, indicating a weak  relationship between the adjusted $X$ and $Y$.

\begin{figure}[htbp]
  \centering
  \includegraphics[width=3.6in]{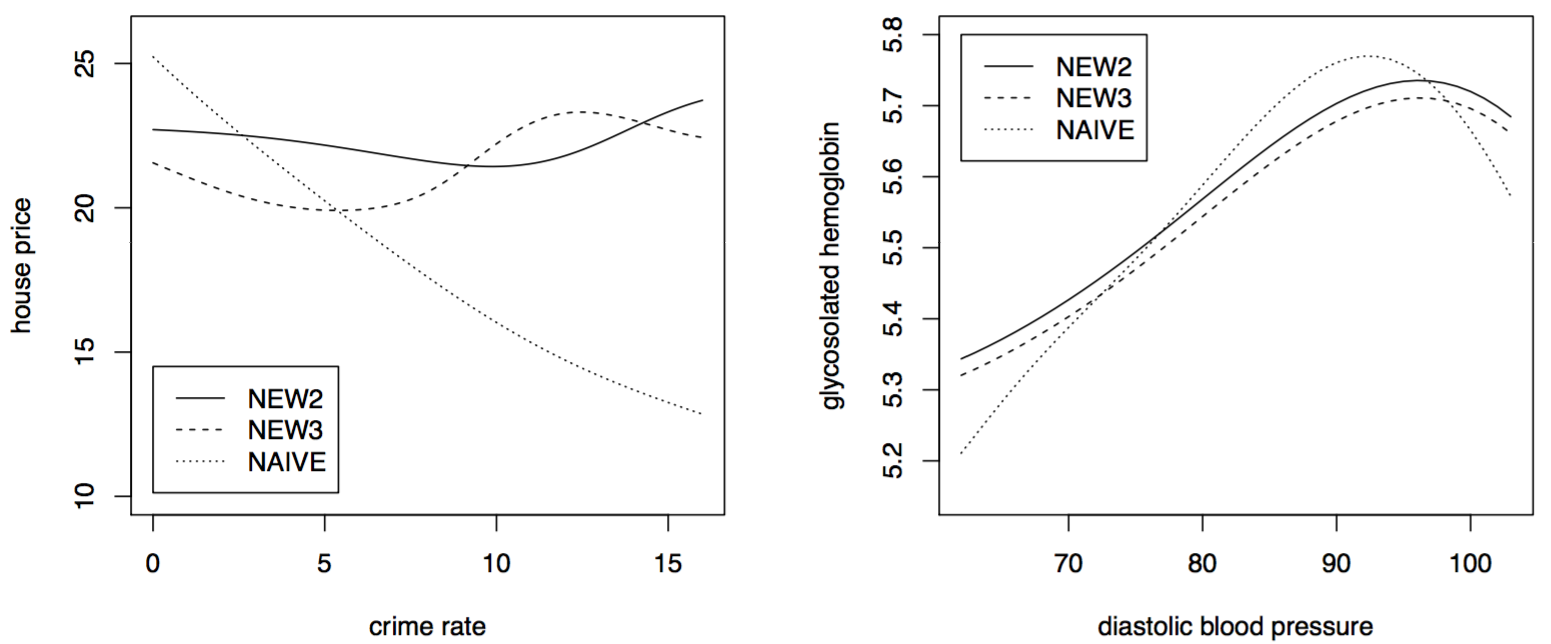}
 \caption{\small\baselineskip=12pt $\hat m_\LL(\cdot ; \rho_1,\rho_2)$ from Sections~\ref{two-stage.estiamtion.sec} (NEW2) and~\ref{sec:fourth} (NEW3), and naive estimator $\hat m_{\LL,{\rm naive}}$ (NAIVE) for the Boston data (left) and the diabetes data (right).}\label{F:Realdata}
\end{figure}


Next, we applied our procedure to the diabetes dataset used by Schorling et al.~(1997) and Willems et al.~(1997), available at \verb+http://biostat.mc.van+ \verb+derbilt.edu/DataSets+,  which represents a subset of 403 individuals taken from a larger cohort of 1046 subjects who participated in a study for African Americans about obesity, diabetes and related factors in central Virginia. As in \c{S}ent\"urk and Nguyen~(2006), our goal was to examine the relationship between glycosolated hemoglobin level $\tilde Y$, a biomarker for diabetes, and diastolic blood pressure $\tilde X$, adjusting for the effect of body mass index, $U$, which was found to be a confounder for both variables. As in \c{S}ent\"urk and Nguyen~(2006), we removed a few outliers before our analysis. As in the previous example, $\widehat{ E X}$ and $\widehat{ E Y}$ were far from zero, so that we used the estimator $\hat m_\LL(\cdot;\rho_1,\rho_2)$ from Section~\ref{two-stage.estiamtion.sec}, which we compared with the naive estimator $\hat m_{\LL,{\rm naive}}$. Here too, the estimator from Section~\ref{one-step} was identical to the one from Section~\ref{two-stage.estiamtion.sec}. We also computed the estimator from  $\hat m_\LL(\cdot;\rho_1,\rho_2)$ from Section~\ref{sec:fourth}. These estimators, depicted on the right panel of Figure~\ref{F:Realdata}, show that after adjusting for body mass index, the relationship between  glycosolated hemoglobin level and diastolic blood pressure is noticeably less pronounced. We should highlight that, in this example, the data were rather sparse for diastolic blood pressure greater than $100$, and the few patients for which $\tilde X$ was greater than $100$ had a rather low value of $\tilde Y$, whence the decreasing shape on the right hand side of the graph, which may just be an artifact of the sparseness of the data in that area.

Another interesting application of our method is to the baseline data collected from studies A and B of the Modification of Diet in Renal Disease Study (Levey et al., 1994). The nonlinear relationship between the baseline unadjusted glomerular filtration rate (GFR) and serum creatinine (SCr) is of particular interest. Taking body surface area (BSA) as the confounder, Cui et al. (2009) used a parametric nonlinear model of the form $m(x) = \beta_1 \exp(-\beta_2 - \beta_3 x^2) + \beta_4$ to study the relationship between GFR and SCr after correcting for the distorting effect of BSA. Because this dataset is not publicly available, we shall not compare the proposed nonparametric method with that of Cui et al. (2009) in this paper.

\section{Generalizations to the multivariate case}\label{sec:multivariate}
Our approach can be generalized to the $d$-variate case, $d\geq 1$, where we observe data distributed like a vector $(U,\tilde {\bf X}\T,\tilde Y)$, with $\tilde{\mathbf{X}}\in \br^d$ a distorted version of $\mathbf{X}\in \br^d$. 
Reflecting the fact that the components of $\tilde{\mathbf{X}}$ may not all be distorted, we write $d=d_1+d_2$, with $d_1\geq 0$ and $d_2\geq 1$, and let $\mathbf{X}=( \mathbf{X}_{1}\T,  \mathbf{X}_{2}\T   )\T$ and $\tilde{\mathbf{X}}=( \mathbf{X}_{1}\T,  \tilde{\mathbf{X}}_2\T   )\T$, where $\mathbf{X}_{1}= (X_{1},\ldots, X_{d_1} )\T$ and $\tilde{\mathbf{X}}_{2}=(\tilde X_{d_1+1} , \ldots, \tilde X_{d})\T$ is a distorted version of 
$\mathbf{X}_{2}=(X_{d_1+1} , \ldots, X_{d})\T$, and where we use the convention that $\mathbf{X}=\mathbf{X}_{2}$  if $d_1=0$. In this notation, the data  $\{(U_i , \tilde{Y}_i,  \mathbf{X}_{1i}\T , \tilde{\mathbf{X}}_{2i}\T  )\}_{i=1}^n$ we observe are generated by the model
\be
	\begin{cases}
			 Y = m(\mathbf{X})  + \varepsilon\, \sigma({\bf X}),   \\
	 	  \tilde{Y} = \psi(U) \, Y , \ \ \tilde X_{d_1+r}  = \varphi_r(U) \, X_{d_1+r} , \ \   r=1,\ldots, d_2 , 
	\end{cases} 
  \label{multivariate.model}
\ee
where $m({\bf x})=E(Y|{\bf X}={\bf x})$ is a  curve we wish to estimate, 
the random variables $\mathbf{X}$, $U$ and $\varepsilon$ are mutually independent, $E(\varepsilon)=0$ and $\var(\varepsilon) = 1$. As in \eqref{SM.cond1}, we assume that
$
	E \{ \psi(U) \} = 1$, $E \{ \varphi_r(U) \} = 1$, for $ r=1,\ldots, d_2.
$

The procedures from Section~\ref{one-step} to \ref{sec:fourth} can each be generalized to the multivariate setting, but for space constraint here we show only how to generalize the approach from Section~\ref{one-step}. The same ideas can be applied for the methods from Sections~\ref{two-stage.estiamtion.sec} and \ref{sec:fourth}.
To construct a nonparametric version of the estimator from Section~\ref{one-step}, we first  construct predictors $\hat{Y}_i$ and $\hat{X}_{i , d_1+1}, \ldots , \hat{X}_{id}$ as in equation \eqref{generated.response.predictor}, and let 
$$
	\hat{\mathbf{X}}_i = (  X_{i1}, \ldots, X_{id_1} ,  \hat{X}_{i , d_1+1}, \ldots , \hat{X}_{id})\T.
$$ 
Next, we use a standard multivariate local linear regression estimator applied to the data $(\hat{\mathbf{X}}_i\T , \hat Y_i)$. That is,  we define (see Fan and Gijbels,~1996) $\hat{m}_{\LL}(\mathbf{x}) = \hat{\alpha}_0$, where
$
	(\hat{\alpha}_0, \hat{\alpha}_1  )  = \argmin_{\alpha_0 \in \br, \alpha_1 \in  \br^{d}}    \sn \big\{ \hat{Y}_i - \alpha_0 -  \alpha_1  \T (\hat{\mathbf{X}}_{ i} - \mathbf{x} ) \big\}^2 \, \mathbf{K}_{\mathbf{h}}(\hat{\mathbf{X}}_i - \mathbf{x}) ,
$
with $\mathbf{K}_{\mathbf{h}}(\mathbf{x}) = \prod_{r=1}^d h_r^{-1} K(x_r/h_r)$  a $d$-dimensional product kernel, $K$ a univariate kernel, and $\mathbf{h} = (h_1, \ldots, h_d)\T$ a vector of bandwidths.

It is well known that fully nonparametric estimators suffer from the curse of dimensionality, which means that as $d$ increases, such estimators can only work reasonably well if the sample size is very large.  
To overcome this problem, a common approach is to restrict the regression model so that only univariate curves have to be fitted. A popular example is the additive model (Hastie and Tibshirani,~1990), which assumes that
$m(\mathbf{X})=m_0 + \sum_{j=1}^d m_j(X_j)$. In our context, the additive covariate-adjusted regression model can be written as
\begin{align}
\begin{cases} 	 Y = m_0 + \sum_{j=1}^d m_j(X_j)  + \varepsilon\, \sigma({\bf X}),  \\
  \tilde{Y} = \psi(U) \, Y , \ \  \tilde{X}_{d_1+r} = \varphi_r(U) \, X_{d_1+r }, \ \ r=1,\ldots, d_2 , 
  \end{cases}  \label{additive.model}
\end{align}
where $m_1,\ldots, m_d$ are unknown univariate functions satisfying $E \{ m_j(X_j) \}=0$ for $j=1,\ldots, d$ and $m_0$ is an unknown parameter. 

In the standard setting where the $(\mathbf{X}_i\T ,Y_i)$'s are directly observed, there are several ways to fit the additive model; see Horowitz~(2014) for an overview of estimation and inference for nonparametric additive models. The simplest approach is to adapt to our setting the iterative backfitting algorithm of Buja et al.~(1989), as follows. First, let $\hat{m}_0 = n^{-1} \sn \tilde{Y}_i$ and $\hat{m}_j \equiv 0$ for $j=1,\ldots, d$.
For $j=1,\ldots,d$, update $\hat m_j$ by taking it equal to a local linear regression estimator using the data $\{ (  \hat{X}_{ij}, \hat{Y}_i  -\hat{m}_0 - \sum_{k\neq j} \hat{m}_j(\hat{X}_{ik} )  ) \}_{i=1}^n $. 
Iterate  until the estimates $\hat{m}_j$ stabilize. (Here $\hat X_{ij}=X_{ij}$ if $j\leq d_1$.)

Alternatively, instead of taking $\hat m_j=0$ as initial estimators,  we could start with a linear approximation of the model in \eqref{additive.model}. See Appendix~C in the supplementary file for details. 
We could also apply similar transformations to other existing methods for fitting additive models, such as  the approach suggested by Horowitz and Mammen~(2004). The main theoretical challenge is a delicate analysis on how the presence of generated response and predictors affects the first order asymptotic properties of the final estimators. However, deriving such results requires much more work than can possibly done in this paper, and so we leave this problem for future research. The method proposed in this section can be applied to creatinine data, which was analyzed by \c{S}ent\"urk and M\"uller~(2006). In this study, serum creatinine level is taken as the response and the two predictors include cholesterol level and serum albumin level. The confounder variable $U$ is taken to be body mass index defined as weight/height$^2$. The readers can find more details about this dataset in \c{S}ent\"urk and M\"uller~(2006).

\section{Proof of Theorem~\ref{thm.1}}
\label{sec:proofs}

\noi 
We start by introducing basic notations. For a kernel function $K$, we write $\mu_{K,\ell}= \int u^{\ell} \,  K(u) \, du$ for non-negative integers $\ell$. For any set $S$, we denote its complement by $S^{{\rm c}}$ and its cardinality by $\# S$. Throughout, we let $\con$ denote a finite positive constant independent of $n$, which may take different values at each occurrence. We also use the following notation: $\mu_0 =E (X)$, $m_0=E (Y)$, $\mu_0^+ = E|X|$, $m_0^+ = E|Y|$ and
\begin{align}
	 \varphi_0   = \mu_0\, \varphi , \quad \psi_0  = m_0 \, \psi , \quad  \varphi^+_0   = \mu_0^+ \, \varphi , \quad \psi_0^+  = m_0^+ \, \psi .   \label{equiv.mod}
\end{align}

We proceed with the proof of Theorem~\ref{thm.1}. For $u\in I_U=[0,1]$, write
\be 
	w_0(u) \equiv 1, \ \hat w_X(u) =   \hat \mu_0^+ \,     \varphi(u) /\hat \varphi_{0,\LL}^+(u) \ , \ \hat w_Y(u)= \hat m_0^+ \,     \psi(u) / \hat \psi_{0,\LL}^+(u),	\label{def.r}
\ee
where $\hat{\mu}_0^+ = \widehat{E|X|} = n^{-1} \sn |\tilde{X}_i|$, $\hat{m}_0^+ = \widehat{E|Y|} = n^{-1} \sn |\tilde{Y}_i|$
and $\hat  \varphi_{0,\LL}^+$ and $ \hat \psi_{0,\LL}^+$ 
are local linear estimators of $\varphi_0^+$ and $\psi_0^+$  defined below \eqref{hatphi1}.

Noting the model at \eqref{basic.model}, and hence by \eqref{generated.response.predictor} and \eqref{def.r},
\be 
	\hat X_i  = X_i   \hat w_X(U_i), \quad \hat Y_i =  Y_i \, \hat w_Y(U_i) . \label{hat.X.tilde.Y}
\ee
Substituting the expressions in \eqref{hat.X.tilde.Y} into \eqref{LL.m} gives
\begin{align}
&	 \hat  m_{\NW}(x)  -m(x)  = 	\{ n \hat{f}_{\hat{X}}(x) \}^{-1} \sn K_h(x-\hat X_i)\{\hat w_Y(U_i)-w_0(U_i)\} Y_i \nn \\
   +& 	\{ n \hat{f}_{\hat{X}}(x) \}^{-1} \sn K_h(x-\hat X_i) \{m(X_i)-m(x) \}     
  + 	\{ n \hat{f}_{\hat{X}}(x) \}^{-1}  \sn K_h(x-\hat X_i)  \nn\\
  & \times \sigma(X_i)\varepsilon_i   
 \equiv   \hat \Pi_{01}(x)+   \hat \Pi_{02}(x) + \hat \Pi_{03}(x), \label{dec.1}
\end{align}
where
\be
	 \hat{f}_{\hat{X}}(x)  \equiv   n^{-1} \sn K_h(x-\hat{X}_i). \label{hatf.definition}
\ee

\noi
{\it Proof of (i).}\\
We start by establishing uniform bounds for $\hat w_X$ and $\hat w_Y$ which will be useful throughout the proof.
Recalling that the $U_i$'s are supported on $I_U=[0,1]$, for $Z=Y$ or $Z=X$ we use the notation 
$ \| \hat w_Z  - w_0  \|_\infty= \sup_{u\in [0,1]}|\hat w_Z(u)  - w_0(u)|$. 
To derive our bounds, note that under Conditions (B1)--(B6), for $\ell= 0 , 1 ,2$, we have (Masry, 1996; Hansen, 2008)
\begin{align} \label{unif.rate.1}
\begin{split}
	&\sup_{u\in [0,1]} \big| \hat \varphi_{0,LL}^{+ \, (\ell)} (u)- \varphi_0^{+\,(\ell)}(u) \big| =O_P \{ \delta_{ \ell n}(g_1)  \},  \\
	&\sup_{u\in [0,1]} \big|  \hat \psi_{0,LL}^{+\,(\ell)}(u)- \psi_0^{+\,(\ell)}(u) \big| = O_P \{ \delta_{ \ell n}(g_2)  \}, 
\end{split}  
\end{align}
where, for all $t>0$, 
\be
\delta_{\ell n}(t) \equiv  t^2+ (n t^{2\ell+1} )^{-1/2}  (\log n)^{1/2} .   \label{deltan.definition}
\ee
In particular, for $g_1 = g_{1n} \asymp n^{-\beta_1}$ and $g_2 = g_{2n} \asymp n^{-\beta_2}$, we have $\delta_{0n}(g_1) = O(   n^{- \lambda_1 } \sqrt{\log n} ) $ and $\delta_{0n}(g_2) = O(   n^{- \lambda_2 } \sqrt{\log n} ) $, where
$
 \lambda_\nu \equiv   \min(2\beta_\nu, 1/2-\beta_\nu/2) \in ( 0, 2/5]$, for $\nu =1, 2$.
 
Now, using \eqref{equiv.mod} and \eqref{def.r}, we can write
\be
\hat w_X(u)-w_0(u) 
= \frac{ \hat \mu_0^+ \,    \varphi(u) - \hat{\varphi}^+_{0, \LL}(u) }{\hat \varphi^+_{0,\LL}(u)} =  \frac{ (\hat \mu_0^+ - \mu_0^+  )  \varphi(u) }{\hat \varphi_{0, \LL}^+(u)}+\frac{   \varphi_0^+(u)  -\hat \varphi^+_{0, \LL}(u) }{\hat \varphi^+_{0,\LL}(u)  }\,,  \label{wX.difference}
\ee
and a similar equation can be written for $\hat w_Y$. 

Since, by Condition~(B1), $\gamma_1 \equiv  \min_{u\in [0,1]}\min \{ |\varphi_0^+(u)| , |\psi_0^+(u)|  \} >0$, a direct consequence of \eqref{unif.rate.1} and Taylor expansion is that
$
	\{  \hat \varphi^+_{0, \LL}(u)  \}^{-1} =  \{ \varphi_0^+(u) +  \hat \varphi^+_{0, \LL}(u) - \varphi^+_0(u)    \}^{-1} = \{ \varphi^+_0(u) \}^{-1} +  O_P \{ \delta_{0n}(g_1)  \}
$
uniformly over $u\in [0,1]$. Moreover, we also have
$
	\hat \mu_0^+ =  \mu_0^+ + O_P(n^{-1/2})$ and $\hat m_0^+ = m_0^+ +O_P(n^{-1/2}).
$
Substituting the previous two displays into \eqref{wX.difference} gives, for $Z_1 =X$ and $Z_2 =Y$,
\begin{equation}
   \| \hat w_{Z_\nu} - w_0  \|_\infty = O_P \{ \delta_{0n}(g_\nu)  \} =O_P \{  n^{- \lambda_\nu } (\log n)^{1/2}  \} . \label{pre.1}   
\end{equation}
Later in our proof, it will also be useful to use the fact that $\delta_{0n}(g_1) = o(h)$ because $\alpha_0<2\beta_1$.

Next we study the common denominator $\hat f_{\hat{X}}(x)$ of $\hat \Pi_{01}(x)$, $\hat \Pi_{02}(x)$ and $\hat \Pi_{03}(x)$. Let 
\be
	\hat{f}_X(x) = n^{-1} \sn K_h(x-X_i) \label{tildef.definition}
\ee
denote the standard kernel estimator of $f_X(x)$ that we would use if the $X_i$'s were available. For this estimator, it is well known (see e.g.~Theorem~6 in Hansen, 2008) that $\max_{x\in [a, b]}  | \hat{f}_X(x) -  f_X(x) | = O_P\{ \delta_{0n}(h) \}$. Shortly we shall prove that 
\be 
	 \max_{x\in [a,b] } \big| \hat{f}_{\hat{X}}(x) - \hat{f}_X(x) \big|= O_P \{ h^{-1} \delta_{0n}(g_1)  \} = o_P(1) ,  \label{dif.3*}
\ee
which further leads to $\max_{x\in [a,b]}   | \hat{f}_{\hat{X}}(x) - f_X(x)  | =  O_P \{ h^{-1} \delta_{0n}(g_1) + \delta_{0n}(h)  \} = o_P(1)$. In turn, using arguments similar to those we used above to treat the denominator of $ \hat w_{Z_\nu}  - w_0 $, and taking into account the fact that $  \min_{x\in [a,b]}f_X(x)\allowbreak >0$, we obtain
\begin{align}
	\{\hat{f}_{\hat{X}}(x)\}^{-1} & = \{f_X(x) + \hat{f}_{\hat{X}}(x) - f_X(x)\}^{-1} \nn \\
	& =\{f_X(x)\}^{-1} + O_P \{ h^{-1} \delta_{0n}(g_1) \} = \{f_X(x)\}^{-1} + o_P(1) \label{inverse.hatf.consistency}
\end{align}
uniformly over $x\in [a,b]$, and that $\max_{x\in[a,b]} \{ \hat{f}_{\hat{X}}(x) \}^{-1} = O_P(1)$.

Next we prove \eqref{dif.3*}. For this, note that for any $C>0$, we can write
\begin{align*}
& P\Big\{\max_{x\in [a,b] } \big| \hat{f}_{\hat{X}}(x) - \hat{f}_X(x) \big| > C  h^{-1} \delta_{0n}(g_1)  \Big\} \\
& \leq   P\Big\{\max_{x\in [a,b] } \big| \hat{f}_{\hat{X}}(x) - \hat{f}_X(x) \big| >C  h^{-1} \delta_{0n}(g_1), {\cal A}_n \Big\}
+ P ({\cal A}_n^{\rm c}  ),
\end{align*}
where ${\cal A}_n$  is an event that we shall define below, and which is such that $P({\cal A}_n)\to 1$ as $n\to\infty$.
Therefore, to prove \eqref{dif.3*}, it suffices to handle the first term on the right side of the inequality above.
Towards this end, first, comparing the definitions \eqref{hatf.definition} and \eqref{tildef.definition} we see that for each $x\in \br$, 
\begin{equation}
  \big| \hat{f}_{\hat{X}}(x) - \hat{f}_X(x) \big| 
   \leq  \| K'\|_\infty  \frac{\| \hat{w}_X -w_0 \|_\infty }{nh^2} \sn |X_i| I\big( |X_i-x|\leq h \mbox{ or } |\hat{X}_i -x |\leq h \big). \label{hatf.diff.bound.1}
\end{equation}
To further bound the right side of \eqref{hatf.diff.bound.1}, we shall show that $\hat{X}_i$ and $X_i$ are uniformly close (see \eqref{uniform.rate.error.in.predictors} below) as long as the estimation error of $\hat{w}_X$ is well-controlled. To see this, for $\lambda \geq 0$, define the event
\be
	\mathcal{E}_n(\lambda) = \big\{  \| \hat{w}_X - w_0 \|_\infty \leq n^{-\lambda}   \big\}.  \label{event.E}
\ee
By \eqref{pre.1}, we have $P\{ \mathcal{E}_n(\lambda) \} \rightarrow 1$ as $n\to \infty$ provided that $\lambda<\lambda_1$.
Moreover, define events
\be
	\mathcal{E}_{1n}(\lambda) = \Big\{ \max_{1\leq i\leq n}|X_i| \leq \lambda \log n \Big\} \ \ \mbox{ and } \ \  \mathcal{E}_{2n}(\lambda) = \Big\{ \max_{1\leq i\leq n}|\varepsilon_i | \leq \lambda \log n \Big\}. \label{events}
\ee
In the proof of Lemma~F.1 in the supplementary file, we shall show that for every given $c>0$, there exist a constant $C_1 >0$ such that $P\{ \mathcal{E}_{1 n}(C_1) \} \geq 1- \con\,  n^{-c}$.

Let $\alpha\in(\alpha_0 , \lambda_1 )$ be a constant, such that under Condition~(B6), $n^{-\alpha}=o(h)$ and $P\{ \mathcal{E}_n(\alpha)^{{\rm c}} \} \rightarrow 0$ as $n\to \infty$. On the event $\mathcal{E}_n(\alpha) \cap \mathcal{E}_{1n}(C_1)$, we have
\be
	\max_{1\leq i\leq n} |\hat{X}_i - X_i | \leq  \| \hat{w}_X -w_0 \|_\infty \max_{1\leq i\leq n}|X_i| \leq C_1 n^{-\alpha} \log n ,  \label{uniform.rate.error.in.predictors}
\ee
such that for every $x\in [a,b]$,
$
	|X_i-x| \leq     |\hat{X}_i -x| + C_1   n^{-\alpha} \log n.
$
Therefore, on the event $\mathcal{E}_n(\alpha) \cap \mathcal{E}_{1n}(C_1)$ with $n$ sufficiently large,
\be
	I\big(  |\hat{X}_i -x |\leq h \big) \leq I ( |X_i-x | \leq 2h ).  \label{indicator.inequality}
\ee
It follows from \eqref{hatf.diff.bound.1} and \eqref{indicator.inequality} that,  on $\mathcal{E}_n(\alpha) \cap \mathcal{E}_{1n}(C_1)$ with $n$ large enough,
\begin{align}
  &   \max_{x\in [a,b] }   \big|  \hat{f}_{\hat{X}}(x) - \hat{f}_X(x) \big|  \notag\\
 &   \leq \| K' \|_\infty  \| \hat{w}_X - w_0 \|_\infty  (n h^2)^{-1} \max_{x\in [a,b]} \sum_{i=1}^n |X_i| I ( |X_i-x|\leq 2h ) \nn \\
  &   \leq \| K' \|_\infty  \| \hat{w}_X - w_0 \|_\infty  (n h^2)^{-1} \max_{x\in [a,b]} (|x|+2h) \sum_{i=1}^n  I ( |X_i-x|\leq 2h ) \nn \\
 & \leq \con  \| \hat{w}_X -w_0 \|_\infty \,  h^{-2}   \max_{x\in [a,b] } \{ \hat{F}_X(x+2h  ) - \hat{F}_X(x- 2h)   \},      \label{dif.3} 
\end{align}
where $\hat{F}_X(x)=n^{-1}\sn I(X_i\leq x)$ denotes the empirical distribution function. To further bound the right-hand side of \eqref{dif.3}, we let $F_X$ be the distribution function of $X$ and then apply the Dvoretzky-Kiefer-Wolfwitz inequality (Massart, 1990) to obtain that $P( \s{n}\| \hat{F}_X  - F_X \|_\infty >y) \leq  2\exp(-2y^2)$ for all $y>0$, where $\| \hat{F}_X  - F_X \|_\infty\equiv  \sup_{x\in \br} |\hat{F}_X(x)-F_X(x)|$. For $\lambda >0$, define the event
\be
	\mathcal{E}_{0n}(\lambda ) = \Big\{  \sqrt{n}  \| \hat{F}_X  - F_X \|_\infty  \leq ( \lambda \log n)^{1/2}  \Big\}, \label{event.E0}
\ee
such that $P\{\mathcal{E}_{0n}(1/2)\}\geq 1- 2n^{-1}$. Under Condition~(B3), we deduce that on the $\mathcal{E}_{0n}(1/2)$ with $n$ sufficiently large,
\begin{align}
	 \max_{x\in [a,b] } & \{ \hat{F}_X(x+2h   ) - \hat{F}_X(x-2h)  \} 
	  \leq \max_{x\in [a,b] }  \{ F_X(x+2h  ) - F_X(x-2h )  \} \nn\\
	  &+   \{2(\log n)/n\}^{1/2}   
	 \leq 4 \| f_X \|_\infty \, h +   \{2(\log n)/n\}^{1/2}  \leq \con\, h.  \label{EDF.inequality}
\end{align}
Substituting this into \eqref{dif.3} and taking $\mathcal{A}_n \equiv  \mathcal{E}_n(\alpha) \cap \mathcal{E}_{0n}(1/2) \cap \mathcal{E}_{1n}(C_1)$ imply that for all sufficiently large $n$,
\begin{align}
	& P \Big\{  \max_{x\in [a,b] } \big| \hat{f}_{\hat{X}}(x) - \hat{f}_X(x) \big| > C  h^{-1} \delta_{0n}(g_1) , {\cal A}_n \Big\} \nn \\
	&  \leq P\big\{  \| \hat{w}_X -w_0 \|_\infty > \con \delta_{0n}(g_1)   , {\cal A}_n  \big\} \nn \\
	&  \leq  P\big\{  \| \hat{w}_X -w_0 \|_\infty > \con \, \delta_{0n}(g_1)   \big\},   \label{hatf.diff.bound.2}
\end{align}
and that $P(\mathcal{A}_n^{{\rm c}}) \rightarrow 0$ as $n\rightarrow \infty$. Together, \eqref{pre.1} and \eqref{hatf.diff.bound.2} prove \eqref{dif.3*}.

Next we study $\hat \Pi_{01}(x)$. For this, we first write $\hat{f}_{\hat{X}}(x)\,\hat{\Pi}_{01}(x)$ as 
\begin{align}
 n^{-1} &\sn K_h(x-X_i) ( \hat{w}_Y -w_0)(U_i)   \, Y_i + n^{-1} \sn \{ K_h(x-\hat{X}_i) - K_h(x-X_i) \} \notag\\
&  \times (\hat{w}_Y -w_0)(U_i) \, Y_i \equiv    \, J_1(x) + J_2(x).  \label{J1J2.definition}
\end{align}
Applying Lemma~F.4 with $g_2 \asymp n^{-\beta_2}$ to $J_1(x)$ implies 
\be 
	\max_{x\in [a,b]} | J_1(x) | = O_P(g_2^2)  = O_P( n^{-2 \beta_2} ). \label{J1.uniform.rate}
\ee 
For $J_2(x)$, note that 
$
 J_2 (x)   \leq \| K' \|_\infty \| \hat{w}_X - w_0 \|_\infty \| \hat{w}_Y - w_0 \|_\infty 
 \, (nh^2)^{-1} \sn \allowbreak|X_i|\{|m(X_i)|+ \sigma(X_i) |\varepsilon_i|\} I\big( |X_i -x |\leq h \, \mbox{ or } \, |\hat{X}_i -x | \leq  h \big).
$
The argument leading to \eqref{dif.3*} can be used to prove that
$	\max_{x\in [a,b]} (nh)^{-1} \allowbreak\sn |X_i \, m(X_i)| \allowbreak I\big( |X_i -x |\leq h \, \mbox{ or } \, |\hat{X}_i -x | \leq  h \big) = O_P(1) 
$
and the same bound holds if the $m(X_i)$'s are replaced by the $\sigma(X_i)$'s. Moreover, similarly to (F.9) in the proof of Lemma~F.1, it can be proved that
\be
 \max_{1\leq i\leq n}|\varepsilon_i| = O_P(\log n).  \label{max.exp.ubd}
\ee
This, together with \eqref{pre.1} and the two displays before \eqref{max.exp.ubd} yields
\begin{equation}
 \max_{x\in [a,b]} |J_2(x)| = O_P\{ h^{-1} \delta_{0n}(g_1) \delta_{0n}(g_2) \log n \} = o_P\{ g_2^2 + (ng_2)^{-1/2}  \}. \label{J2.uniform.rate}
\end{equation}
Here, the last step follows from Condition~(B6) and the assumption that $\alpha_0<2\beta_1$. Together, \eqref{inverse.hatf.consistency}, \eqref{J1J2.definition}, \eqref{J1.uniform.rate} and \eqref{J2.uniform.rate} imply
\be 
	\max_{x\in [a,b]} \big| \hat \Pi_{01}(x) \big| =O_P(g_2^2) + o_P \{ (ng_2)^{-1/2}  \} . \label{Pi01.uniform.rate}
\ee

For $\hat{\Pi}_{02}(x)$, we write $K_h(x-\hat{X}_i)$ in $\hat{f}_X(x) \, \hat{\Pi}_{02}(x)$ as $K_h(x-\hat{X}_i) - K_h(x-X_i) + K_h(x-X_i)$. A similar argument to what we used to study \eqref{hatf.diff.bound.1} gives
\begin{align}
 & \max_{x\in [a,b]} \Big| n^{-1} \sn \{ K_h(x-\hat{X}_i) - K_h(x-X_i )  \} \{m(X_i) - m(x) \} \Big| \leq \| m'  \|_\infty \| K'    \|_\infty\notag\\
 &  \times \frac{\|\hat{w}_X -w_0 \|_\infty}{nh^2} \max_{x\in[a,b]} \Big|  \sn |X_i(X_i-x)| \,  I\big( |X_i -x | \leq h \, \mbox{ or } \, |\hat{X}_i -x | \leq h \big)  \Big| \nn \\
 &  =  O_P\{  \delta_{n,0}( g_1 ) \}. \label{error.in.bias}
\end{align}
Together with \eqref{deltan.definition} and \eqref{inverse.hatf.consistency}, this implies
\be 
	  \max_{x\in [a,b]}  \big| \hat  \Pi_{02}(x) - \Pi_{02}(x) \big| = O_P \{ \delta_{n,0}(g_1)   \},   \label{approxi.2}
\ee
where $\Pi_{02}(x) \equiv  \{n \hat{f}_{\hat{X}}(x)\}^{-1} \sn  K_h(x-  X_i) \{m(X_i) - m(x)\}$.

Next, we write $\hat{f}_{\hat{X}}(x)\, \Pi_{02}(x)= n^{-1} \sn  K_h(x-  X_i) \{m(X_i) - m(x)\}$ as
\be
	  n^{-1} \sn \{g_{n,i}(x) - E  g_{n,i}(x) \} +  n^{-1} \sn E  g_{n,i}(x) \equiv  R_n(x) +  n^{-1} \sn E  g_{n,i}(x)   , \label{Pi02.dec}
\ee
where $g_{n,i}(x)= K_h(x-X_i)\{m(X_i)-m(x)\}$. To bound $\max_{x\in [a,b]}|R_n(x)|$, we create a grid using $N$ points of the form $x_j=a+j\epsilon $ with $\epsilon=(b-a)/N$ for some $N \geq 1$ to be determined below \eqref{event.Cn}. Since $g_{n,i}'(x)=h^{-1}K'_h(x-X_i)\{m(X_i)-m(x)\}- m'(x) K_h(x-X_i)$, by the mean value theorem we have, for every $x,  y \in \br$,  $| g_{n,i}(x)  - g_{n,i}(y) | \leq  (  \| K \|_\infty + \| K' \|_\infty    ) \| m' \|_\infty  \,h^{-1} |x-y|$. Therefore,
\be 
	\max_{x\in [a,b]}|R_n(x)| \leq \max_{1\leq j\leq N }|R_n(x_j)| + 2 (  \| K \|_\infty + \| K' \|_\infty   ) \| m' \|_\infty  \,\epsilon h^{-1}.  \label{Rnx.ubd.1}
\ee 
For each $x\in \br$ fixed, $g_{n,1}(x), \ldots ,g_{n,n}(x)$ are independent random variables satisfying $| g_{n,i}(x) | \leq \| K \|_\infty \| m' \|_\infty $ and 
$
	 E \{ g_{n,i}(x) \}^2 
	 = h^{-1} \int K^2(t)\{m(x-ht)-m(x)\}^2 f_X(x-ht) \, dt   \leq \| m' \|_\infty^2 \| f_X \|_\infty  \, h\int t^2 K^2(t)  \, dt.
$
Hence, by Bernstein's inequality and Boole's inequality, for every $y\geq 0$,
\begin{align}
	    P\Big\{ \max_{1\leq j\leq N }&|R_n(x_j)| \geq   y  \Big\}  
	   \leq \sum_{j=1}^N P \Big[ \Big| n^{-1}  \sn \{ g_{n,i}(x_j) - E g_{n,i}(x_j) \} \Big| \geq y \Big]  \nn \\
	   \leq & 2N \exp\Big\{ - \frac{n  y^2}{ 2 ( c_K ^2\| m' \|_\infty^2 \| f_X \|_\infty  \, h    + \| K \|_\infty \| m' \|_\infty y/3) } \Big\}, \label{Bennnett.inequality}
\end{align}
where $c_K \equiv  \{ \int t^2 K^2(t)  \, dt\}^{1/2}$. For every $\lambda>0$, define the event
\be
\mathcal{C}_n(\lambda ) = \Big\{ \max_{1\leq j\leq N }|R_n(x_j)| \leq  c_K\| m' \|_\infty \| f_X \|_\infty^{1/2}  \sqrt{ \frac{h \lambda }{n}} +  \| K \|_\infty \| m' \|_\infty \frac{\lambda}{n} \Big\}, \label{event.Cn}
\ee
such that in view of \eqref{Bennnett.inequality}, $P\{ \mathcal{C}_n(\lambda)^{{\rm c }} \} \leq 2N\exp(-\tau \lambda)$ for some absolute constant $\tau>0$. By taking $N=n$ and $\lambda=2\tau^{-1}\log n$, it follows from \eqref{Rnx.ubd.1} and \eqref{event.Cn} that
\be
	\max_{x\in [a,b]} |R_n(x)| = O_P \{  h^{1/2}(n/\log n)^{-1/2} + n^{-1} \log n +  (nh)^{-1}  \}. \label{Rnx.ubd.2}
\ee

For the second term on the right-hand side of \eqref{Pi02.dec}, standard arguments show that, under Conditions~(B3) and (B5),
\begin{equation}
     E g_{n,i}(x) =  \{  m''(x)f_X(x) /2 + m'(x)f_X'(x) \} \mu_{K,2} \,h^2 + o(h^2)  \label{bias.uniform.rate.2}
\end{equation}
uniformly in $x\in [a,b]$. Consequently, combining \eqref{inverse.hatf.consistency}, \eqref{approxi.2}, \eqref{Rnx.ubd.2} and \eqref{bias.uniform.rate.2}, we get
\be
	 \max_{x\in [a,b]} \big| \hat  \Pi_{02}(x)  \big| = O_P(h^2). \label{Pi02.uniform.rate}
\ee

For the last term $\hat{\Pi}_{03}(x)$ in \eqref{dec.1}, we need to control the stochastic error
\be
 {\Delta}_{n,\infty}\equiv 	\max_{x\in [a,b]}  \Big| n^{-1} \sn  K_h(x-\hat{X}_i )\sigma(X_i) \varepsilon_i \Big|
 \label{stochastic.error.term}
\ee
for $\hat{X}_i = X_i \hat{w}_X(U_i)$ as in \eqref{hat.X.tilde.Y}. To this end, we shall use a lattice argument by making a finite approximation of the compact interval $[a,b]$ using a sequence $\{x_j\}_{j=1}^N$ of equidistant points $x_j=a+j\epsilon $ for $\epsilon=(b-a)/N$, and then discretize $\Delta_{n,\infty}$ to define
$
	\Delta_{n,N} \equiv  \max_{1\leq j\leq N} \big| n^{-1} \sn K_h(x_j-\hat{X}_i ) \sigma(X_i) \varepsilon_i \big| .
$
Here, $N$ is a positive integer that will be determined after \eqref{deviation.ineq}.

Instead of dealing with $\Delta_{n,\infty}$ directly, we shall prove that $\Delta_{n,N}$ provides a fine approximation to $\Delta_{n,\infty}$, at least with high probability, and then restrict attention to $\Delta_{n,N}$. By definition of $\Delta_{n,N}$, we have $|\Delta_{n,\infty} - \Delta_{n,N} | \leq \| \sigma  \|_\infty  \| K' \|_\infty \, \epsilon h^{-2} \max_{1\leq i\leq n}|\varepsilon_i|$. Together with \eqref{max.exp.ubd}, this leads to
\be
	|\Delta_{n,\infty} - \Delta_{n,N} | = O_P ( N^{-1} h^{-2} \log n ). \label{finite.approxi.rate}
\ee

For $\Delta_{n,N}$, shortly we shall prove by taking $N=n$ that
\be
\Delta_{n,N} = 	O_P \{ (nh/\log n)^{-1/2}  \},  \label{discrete.ubd}
\ee
which together with \eqref{finite.approxi.rate} leads to
\be
	 \Delta_{n,\infty}   = O_P \{ (nh/\log n)^{-1/2}    + (nh^2)^{-1} \log n  \} = O_P \{ (nh /\log n)^{-1/2} \}, \label{stochastic.error.rate}
\ee
where the last step relies on the identity $(nh^2)^{-1}\log n= (nh/\log n)^{-1/2}\allowbreak(nh^3/\log n)^{-1/2}$ and Condition~(B6). Combing \eqref{inverse.hatf.consistency} and \eqref{stochastic.error.rate} yields
\be
	\max_{x\in[a,b]} \big| \hat{\Pi}_{03}(x) \big| \leq    \max_{x\in [a,b]} \{\hat{f}_{\hat{X}}(x)\}^{-1} \Delta_{n,\infty}  = O_P \{  (nh /\log n)^{-1/2} \}. \label{Pi03.uniform.rate}
\ee

Together, \eqref{Pi01.uniform.rate}, \eqref{Pi02.uniform.rate} and \eqref{Pi03.uniform.rate} complete the proof of \eqref{thm.1}.

Next we prove \eqref{discrete.ubd}. For $\lambda>0$, let $V_{1n}(x)   =  \{  \sn K^2_h(  x- \hat{X}_i )\sigma^2(X_i)  \}^{1/2}$, 
$	 V_{2n}(x)  =\max_{1\leq i\leq n} K_h ( x- \hat{X}_i )\sigma(X_i)$
 and  define the event
\be
	\mathcal{D}_n(N,\lambda) = \Big\{  |\Delta_{n,N}| \leq   \max_{1\leq j\leq N}V_{1n}(x_j)  \sqrt{\lambda}/n+ \max_{1\leq k\leq N}V_{2n}(x_j) \lambda/ n \Big\}\,. \label{event.Dn}
\ee

To deal with $ V_{1n}(x)$, as in the proof of \eqref{hatf.diff.bound.2}, put $ {\cal A}_n = \mathcal{E}_n(\alpha) \cap \mathcal{E}_{1n}(C_1)  \cap \mathcal{E}_{0n}(1/2) $ with $\alpha \in (\alpha_0 ,  \lambda_1)$ such that $P(\mathcal{A}_n^{{\rm c}}) \rightarrow 0$ as $n\rightarrow \infty$, where $\mathcal{E}_n(\alpha)$, $\mathcal{E}_{1n}(C_1)$ and $\mathcal{E}_{0n}(1/2)$ are as in \eqref{event.E}, \eqref{events} and \eqref{event.E0}, respectively. On the event ${\cal A}_n $ with $n$ sufficiently large, it follows from \eqref{indicator.inequality} and \eqref{EDF.inequality} that
\begin{align}
	\max_{1\leq j\leq N}V_{1n}(x_j)& \leq  \max_{x\in [a,b]} V_{1n}(x)   
	 \leq  \frac{ \| \sigma \|_\infty \| K \|_\infty}{h} \max_{x\in [a,b]} \sqrt{  \sn I ( |X_i-x| \leq 2h  )  } \nn\\
	&\leq \con \| \sigma \|_\infty \| K \|_\infty (n/h)^{1/2} .  \label{V1.ubd}
\end{align}
It is easy to see that $\max_{x\in [a,b]}V_{2n}(x)\leq \| \sigma \|_\infty \| K \|_\infty  \, h^{-1}$. This, combined with \eqref{event.Dn} and \eqref{V1.ubd} yields, on the event $\mathcal{D}_n(N,\lambda) \cap \mathcal{A}_n$ with $n$  large enough,
\be
	 \Delta_{n,N}  \leq \con \| \sigma \|_\infty \| K \|_\infty \big\{  \sqrt{\lambda/(nh)} + \lambda/(nh) \big\}.  \label{DnN.restricted.bound}
\ee

Next we show that for properly chosen $N$ and $\lambda$, $P \{ \mathcal{D}_n(N,\lambda)^{{\rm c}} \} \rightarrow 0$ as $n\rightarrow \infty$. Observe that $\hat{w}_X$ defined in \eqref{def.r} is a measurable function of $\{(X_i, U_i)\}_{i=1}^n$ and thus is independent of $\{\varepsilon_i\}_{i=1}^n$. Conditional on $\{(X_i, U_i)\}_{i=1}^n$, taking 
$
	\mathbf{a}=( a_1, \ldots, a_n)\T = \big( K_h(x-\hat{X}_i)\sigma(X_i), \ldots, K_h(x-\hat{X}_n)\sigma(X_n) \big)\T
$
in Lemma~F.2 and using Boole's inequality, we obtain that for every $\lambda \geq 0$,
$
	P\big[   \Delta_{n,N}   >   \max_{1\leq j\leq N}V_{1n}(x_j) \sqrt{\lambda}/n + \max_{1\leq k\leq N}V_{2n}(x_j) \, \lambda/n  \,  \big| \{(X_i, U_i)\}_{i=1}^n  \big] \leq  2N \exp(- c \lambda)
$
where $c >0$ is a constant independent of $n$ and $N$. Taking expectations on both sides of the inequality gives that for every $\lambda \geq 0$, $P\{ \mathcal{D}_n(N,\lambda)^{{\rm c}}  \} \leq  2N \exp(- c \lambda)$. Taking $N=n$ and $\lambda=2c^{-1}\log n$ we get 
\be
	P \{ \mathcal{D}_n(n,\lambda)^{{\rm c}} \} \leq  2n^{-1}.	\label{deviation.ineq}
\ee	
Combining \eqref{DnN.restricted.bound} with $N=n, \lambda=2c^{-1}\log n$, \eqref{deviation.ineq} and the fact that $P(\mathcal{A}_n^{{\rm c}})\rightarrow 0$ proves \eqref{discrete.ubd} as claimed.

\noi
{\it Proof of (ii).}\\
To prove the asymptotic normality, we need to use a more refined argument. In what follows, $x\in [a,b]$ is fixed and we deal with the sum in \eqref{dec.1} over each $\hat{\Pi}_{0j}(x)$ separately. 

First, for $\hat{\Pi}_{01}(x)$, recall in \eqref{J1J2.definition} that $\hat{f}_{\hat{X}}(x) \, \hat{\Pi}_{01}(x)= J_1(x)+ J_2(x)$. By \eqref{J2.uniform.rate} and Condition (B6),
$
	   |J_2(x)|   = o_P \{ h^{-1}\delta_{0n}(g_1) \delta_{0n}(g_2) \log n \}=o_P( g_1^2 + g_2^2 ).
$
For $J_1(x)$, Lemma~F.4 with $g_{2n} \asymp n^{-\beta_2}$ implies
$
	 J_1(x) =   -\tfrac{1}{2} m(x)f_X(x)\allowbreak E\{\psi''(U)/\psi(U) \}  \mu_{L,2} \, g_2^2+ o_P(g_2^2). \nn
$
The last two displays and \eqref{inverse.hatf.consistency} imply
\begin{equation}
  \hat{\Pi}_{01}(x)  = - m(x) E\{\psi''(U)/\psi(U) \}  \mu_{L,2} \, g_2^2/ 2  + o_P( g_1^2 + g_2^2 ) . \label{Pi01.pointwise.rate}
\end{equation}

For $\hat{\Pi}_{02}(x)$, by a first-order Taylor's expansion we obtain
\begin{align}
 & \hat{f}_{\hat{X}}(x) \, \hat{\Pi}_{02}(x)	=  n^{-1} \sn K_h(x - \hat{X}_i ) \{ m(X_i) -m(x) \}\nn \\
	& = n^{-1} \sn K_h(x - X_i ) \{ m(X_i) -m(x) \} + (nh^2)^{-1} \sn K'\Big( \frac{x-X_i}{h} \Big) \nn \\
	& \qquad \times  ( w_0-\hat{w}_X)(U_i) \, X_i \{ m(X_i) -m(x) \} + (2nh^3)^{-1} \sn K''(\xi_n)\nn \\
	& \qquad \times   (w_0 -\hat{w}_X)^2(U_i) \, X_i^2 \{ m(X_i) -m(x) \} I\big( |\hat{X}_i -x|\leq h \big) \nn  \\
	& \equiv   I_1(x) + I_2(x) + I_3(x), \label{dec.2}
\end{align}
where $\xi_n$ is a random variable that lies between $(x-X_i)/h$ and $(x-\hat{X}_i)/h$.

A standard argument shows that $ I_1(x) = O_P\{ h^2 + (nh)^{-1/2} \}$. Together with \eqref{inverse.hatf.consistency}, this yields
\begin{align}
	  \{\hat{f}_{\hat{X}}(x)\}^{-1} I_1(x) 
	   &  = \{ f_X(x) \}^{-1} I_1(x) +  O_P [ h^{-1}\delta_{n,0}(g_1 ) \{h^2 + (nh)^{-1/2}\}  ] \nn \\
	& = \{ f_X(x) \}^{-1} I_1(x) +  o_P \{ h^2 + (nh)^{-1/2}  \} .
\end{align}
For $I_2(x)$, it follows from (F.12) in Lemma~F.3 and \eqref{inverse.hatf.consistency} that
$
\{\hat{f}_{\hat{X}}(x)\}^{-1} I_2(x)\allowbreak =  x m'(x) E\{\varphi''(U) / \varphi(U) \} \mu_{L,2} \,  g_1^2/2 + o_P ( g_1^2 ) .  
$
For $I_3(x)$, a similar argument to that leading to \eqref{dif.3*}  yields $ \max_{x\in [a,b]} | I_3(x) | \allowbreak3= O_P \{ h^{-1} \delta_{n,0}^2 (g_1) \}$ and hence,
$
	  \{ \hat{f}_{\hat{X}}(x) \}^{-1}   I_3(x)  = o_P \{ (nh )^{-1/2}  \}  .  
$
Combining with with \eqref{dec.2} we get
\begin{align}
	\hat{\Pi}_{02}(x) =&  \{f_X(x)\}^{-1} I_1(x) + x m'(x) E\{\varphi''(U) / \varphi(U) \} \mu_{L,2} \,  g_1^2/2 \notag\\
	&+ o_P \{ g_1^2 + h^2 + (nh )^{-1/2}  \}  \label{Pi02.pointwise.rate}
\end{align}
for $I_1(x)$ as in \eqref{dec.2}.
Finally, for the stochastic error term $\hat{\Pi}_{03}(x)$, we shall use an argument similar to that employed in Mammen et al.~(2012) based on empirical process theory. Write $\beta_1=( 1+\xi_0 ) /5$ for some $\xi_0\geq 0$. First, we argue that the estimator $\hat w_X$ falls within a ``nice'' function space, the complexity of which can be measured via covering numbers. Let $\mathcal{M}_{0n}$ be the set of functions $[0, 1]\mapsto \br$ whose derivatives up to order two exist and are uniformly bounded in order by $(ng_1^5/\log n)^{-1/2} \asymp n^{ \xi_0 /2} (\log n)^{1/2} $. Since $\beta_1\geq 1/5$, we have $\lambda_1=\min(2\beta_1, 1/2-\beta_1/2)=1/2-\beta_1/2$. For some $ \alpha \in ( \alpha_0 , 1/2-\beta_1/2)$ to be specified in the paragraph after \eqref{variance}, we define the following set of functions:
\be  
	\mathcal{N}_{0n} =   \big\{ w \in  \mathcal{M}_{0n} : \| w - w_0 \|_\infty \leq n^{-\alpha } \big\}.  \label{function.space}
\ee 
By \eqref{unif.rate.1}, using the same argument that we used to derive \eqref{pre.1}, we have $P(\hat w_X \in \mathcal{N}_{0n}) \rightarrow 1$ as $n\rightarrow \infty$.

Note that $\hat{f}_{\hat{X}}(x) \, \hat{\Pi}_{03}(x)$ in \eqref{dec.1} can be written as 
$
n^{-1} \sn  \{ K_h(x-\hat{X}_i)- K_h(x-X_i) \}\sigma(X_i) \varepsilon_i  + n^{-1} \sn K_h(x-X_i)  \sigma(X_i) \varepsilon_i.
$
For the first term, by Lemma~F.1 we have, for any $\kappa_1 \in (0, 1/2+3\alpha/4 - 3\alpha_0/2- \xi_0/8)$,
$
	 n^{-1} \sn \{ K_h(x-\hat{X}_i)- K_h(x-X_i) \} \sigma(X_i) \varepsilon_i   = O_P( n^{-\kappa_1} ).
$
On the other hand, it is straightforward to show that
$
	   n^{-1} \sn K_h(x- X_i) \sigma(X_i)   \varepsilon_i   = O_P\{  (nh)^{-1/2} \} =  O_P ( n^{-1/2+\alpha_0/2}   ) .
$
Combining this and \eqref{inverse.hatf.consistency}, we get
\begin{align}
	 \hat \Pi_{03}(x)  =&\{n f_X(x)\}^{-1}\sn K_h(x-X_i) \sigma(X_i) \varepsilon_i  \notag\\
	 &+ O_P \{  n^{-\kappa_1 } + n^{-1+\beta_1/2 + 3\alpha_0 /2}  (\log n)^{1/2} \} \nn \\
	 =&   \{n f_X(x)\}^{-1}\sn K_h(x-X_i) \sigma(X_i) \varepsilon_i    +  O_P ( n^{-\kappa_1}  ) .  \label{Pi03.pointwise.rate}
\end{align}

Assembling \eqref{Pi01.pointwise.rate}, \eqref{Pi02.pointwise.rate} and \eqref{Pi03.pointwise.rate} we obtain that, for any $\alpha\in (\alpha_0, 1/2-\beta_1/2 )$ and $\kappa_1\in (0, 1/2+3\alpha/4-3\alpha_0/2 - \xi_0/8 )$,
\begin{align}
  	\hat{m}_{\NW}(x) - m(x)  
   =&\tilde{B}(x)+ \{f_X(x)\}^{-1} I_1(x)  + \sqrt{V(x)} \, N(x) \notag\\
   &+ o_P (   n^{-\kappa_1} +  g_1^2 + g_2^2  + h^2 ),  \label{oracle.comparison.1}
\end{align}
where $\tilde{B}(x)$ and $I_1(x)$ are as in part (ii) of Theorem~\ref{thm.1}  and \eqref{dec.2}, respectively, and
$
	N(x) \equiv  \{ V(x) \}^{-1/2}\, \{n f_X(x)\}^{-1}\sn K_h(x-X_i) \sigma(X_i) \varepsilon_i 
$
for $V(x)$ is as in part (ii) of Theorem~\ref{thm.1}. Further, for $I_1(x)=n^{-1}\sn K_h(x-X_i)\{ m(X_i) - m(x) \}$, proceeding as in \eqref{Pi02.dec} we derive that
\begin{equation}
	 \{f_X(x)\}^{-1} I_1(x)    =  B_0(x) +  o_P \{ h^2 + (nh)^{-1/2}  \}    \label{bias}
\end{equation}
for $B_0(x)$ as in part (ii) of Theorem~\ref{thm.1}. For the third addend on the right-hand side of \eqref{oracle.comparison.1}, Lyapounov's central limit theorem combined with Slutsky's theorem yield
\begin{equation}
	N(x)   \xrightarrow {\mathscr{D}} N(0,1), \   \textrm{ as $n\rightarrow \infty$}.\label{variance}
\end{equation}
In particular, for $h=h_n \asymp n^{-\alpha_0}$ with $\alpha_0 \in (0, 1/2- \beta_1)$, by taking $\alpha$ and $\kappa_1$ in such a way that
$
	\tfrac{4}{3} \alpha_0 < \alpha < \tfrac{1}{2} - \tfrac{1}{2} \beta_1$ and $\tfrac{1}{2}-\tfrac{1}{2}\alpha_0 <  \kappa_1 < \tfrac{1}{2} + \tfrac{3}{4} \alpha - \tfrac{3}{2} \alpha_0 - \tfrac{1}{8} \xi_0 ,
$
we have $n^{-\kappa_1}=o\{(nh)^{-1/2}\}$. This, together with \eqref{oracle.comparison.1}--\eqref{variance} proves \eqref{AN.mNW}.			\qed

\medskip
\noindent
{{\large \bf Acknowledgement.}} We thank three referees and an Associate Editor for their helpful comments which led to an improved version of the manuscript. This research was supported by the Australian Research Council.

\begin{supplement}[id=suppA]
 \stitle{Supplement to ``Nonparametric covariate-adjusted regression''}
 \slink[doi]{}
  \sdatatype{.pdf}
 \sdescription{This supplemental material contains more details for the implementation of the proposed estimators, additional simulation results as well as additional proofs omitted in the main text.}
\end{supplement}

\end{document}


\begin{frontmatter}
\title{Supplement to ``Nonparametric covariate-adjusted regression''}
\runtitle{Covariate-adjusted regression}

\begin{aug}
  \author{\fnms{Aurore}  \snm{Delaigle}\corref{}\thanksref{t2}\ead[label=e1]{A.Delaigle@ms.unimelb.edu.au}},
  \author{\fnms{Peter} \snm{Hall}\thanksref{t2}\ead[label=e2]{halpstat@ms.unimelb.edu.au}}
\and
  \author{\fnms{Wen-Xin}  \snm{Zhou}\corref{}\thanksref{t2}\ead[label=e3]{wenxinz@princeton.edu}}

  \thankstext{t2}{Research supported by grants and fellowships from the Australian Research Council.}

  \runauthor{A. Delaigle, P. Hall and W.-X. Zhou}

  \affiliation{University of Melbourne and Princeton University}

 \address{School of Mathematics and Statistics \\ 
 	University of Melbourne \\ 
 	Parkville, Victoria 3010 \\
    Australia \\ 
\printead{e1}\\
\phantom{E-mail:\ }\printead*{e2}\\
\phantom{E-mail:\ }\printead*{e3}}

 \address{Department of Operations Research \\
			  and Financial Engineering \\
			   Princeton University \\
			   Princeton, New Jersey 08544 \\
			   USA \\  \printead{e3}}

\end{aug}

\begin{abstract}
This supplemental material contains more details for the implementation of the proposed estimators, additional simulation results as well as additional proofs omitted in the main text.
\end{abstract}

\end{frontmatter}

\appendix \label{appendix.sec}
\setcounter{equation}{0}

\section{Details for Section~5.2} \label{app:detailsparam}
Here we provide the details for the way in which we compute $\rho_1^*$ and $\rho_2^*$. 
As commonly done in the literature, abusing notation, we use ${\rm AMISE}_w(\gamma,g)$ to denote the weighted asymptotic ``mean integrated squared error'' of the standard local linear estimator of a regression curve $\gamma(v)=E(Z|V=v)$ computed with a bandwidth $g$, which in fact denotes the integral of the sum of the asymptotic squared bias and variance terms coming from the asymptotic distribution of the estimator, weighted by  $w(x)=f_V(x)\omega(x)$, where $\omega$ is a weight function. For $k,\nu=0,1,\ldots$, define
$\nu_{K,k}=\int x^k K^2(x)\,dx$ and  $\theta_\nu\equiv \int \{\gamma^{(\nu)}(x)\}^2 f_V(x)\omega(x)\,dx$.
Also, let $\sigma^2(v)\equiv\var(Z\,|\,V=v)$. Then, it is well known (Fan and Gijbels, 1996) that
\begin{align}
{\rm AMISE}_w(\gamma,g)=\mu_{K,2}^2 \frac{g^4}{4}\,  \theta_2   +  \frac{\nu_{K,0}}{ng } \int  \sigma^2(x) \omega(x) \,dx\,.\label{AMISEw}
\end{align}

For the estimators in Section~3.3, we take $\rho_1^*$ and $\rho_2^*$ equal to the square root of estimators of, respectively, ${\rm AMISE}_w(\varphi_0,g_{1,\PI})$ and ${\rm AMISE}_w(\psi_0,g_{2,\PI})$, taking there, respectively, $\omega(x)=1\{|\hat \varphi_{0,\LL}(x)|\leq q_\varphi\}$ and $\omega(x)=1\{|\hat \psi_{0,\LL}(x)|\leq q_\psi\}$ with $q_\varphi$ and $q_\psi$ denoting the empirical 0.2-quantile of the $|\hat \varphi_{0,\LL}(U_i)|$'s and the $|\hat \psi_{0,\LL}(U_i)|$'s, respectively.  
For the estimators in Section~3.4, we take $\rho_1^*$ and $\rho_2^*$ equal to estimators of, respectively, ${\rm AMISE}_w(\varphi^*,g_{1,\PI})$ and ${\rm AMISE}_w(\psi^*,g_{2,\PI})$, taking there, respectively, $\omega(x)=1\{|\hat \varphi_{\LL}^*(x)|\leq q_\varphi\}$ and $\omega(x)=1\{|\hat \psi_{\LL}^*(x)|\leq q_\psi\}$ with $q_\varphi$ and $q_\psi$ denoting the empirical 0.2-quantile of the $|\hat \varphi_{\LL}^*(U_i)|$'s and the $|\hat \psi_{\LL}^*(U_i)|$'s, respectively.

To estimate ${\rm AMISE}_w(\gamma,g)$, we use the same ideas as those for computing plug-in bandwidths.
First, we replace  $\int  \sigma^2(x) \omega(x) \,dx$ by $\hat\sigma^2  \int   \omega(x) \,dx$, where  $\hat\sigma^2$ is a constant approximation to  the function $\sigma^2(x)$, obtained  using the difference-based approach; see Buckley et al.~(1988) and Hall et al.~(1990). That is, 
$\hat\sigma^2=\sum_{i=1}^{n-1}(Z_{[i+1]}-Z_{[i]})^2/\{2(n-1)\},$
where $Z_{[i]}$ denotes the concomitant of $V_{(i)}$, the $i$th order statistic of the $V_j$'s.
Then, we estimate $\theta_2$ by
$\hat\theta_2\equiv n^{-1} \sum_{i=1}^n \{\hat \gamma''(V_i)\}^2 \omega(V_i),$
where
$\hat \gamma''$ is a local polynomial estimator of $\gamma''$ of order three computed with a bandwidth $h_2$ selected  as in Ruppert et al.~(1995).

\section{Additional simulation results}
\label{app:simul}

\begin{table}[h]
\begin{center}
\vspace*{-.2cm}
\scriptsize
\caption{\baselineskip=14pt Simulation results for models (i.a) to (iii.a) and (i.b) to (iii.b). The numbers show 100 $\times$ Median integrated squared error [first quartile, third quartile]  calculated from $1000$ simulated samples, using our estimators   $\hat m_\LL$ from Section~2.2 (NEW1), $\hat m_\LL(\cdot ; \rho_1,\rho_2)$ from Section~2.3 (NEW2) and  $\hat m_\LL(\cdot ; \rho_1,\rho_2)$ from Section~2.4 (NEW3), the  naive estimator $\hat m_{\LL,{\rm naive}}$ (NAIVE) and the oracle estimator $\tilde  m_{\LL}$ (ORACLE).}
\label{table:Simul1}
\vspace*{.15cm}
\hspace*{-.1cm}
\begin{tabular}{|c|c|l|l|l|l|l|l|l|l|l|}
\hline
Model&$n$&ORACLE & NEW1 & NEW2 & NEW3& NAIVE  \\
\hline
(i.a)
&$100$&10 [7,14]&16 [11,23]&61 [32,105]&19 [12,37]&72 [56,97]\\
&$200$&5 [4,7]&8 [6,11]&25 [13,46]&9 [6,15]&64 [52,77]\\
&$500$&2 [2,3]&3 [2.75,5]&6 [4,11]&4 [3,6]&56 [49,64]\\
&$1000$&1 [1,2]&2 [1,2]&3 [2,4]&2 [1,3]&53 [48,59]\\
&&&&&&\\[-.3cm]
(ii.a)
&$100$&0.33 [0.24,0.46]&0.84 [0.57,1.26]&1.54 [0.94,2.46]&1.15 [0.64,4.75]&7 [5,9]\\
&$200$&0.17 [0.13,0.23]&0.40 [0.27,0.57]&0.72 [0.45,1.10]&0.49 [0.30,1.20]&7 [5,8]\\
&$500$&0.08 [0.06,0.10]&0.15 [0.11,0.22]&0.26 [0.17,0.40]&0.19 [0.12,0.42]&6 [6,7]\\
&$1000$&0.04 [0.03,0.05]&0.08 [0.06,0.11]&0.12 [0.08,0.19]&0.10 [0.06,0.26]&6 [5,7]\\
&&&&&&\\[-.3cm]
(iii.a)
&$100$&61 [48,76]&101 [83,122]&143 [114,174]&113 [85,164]&197 [181,222]\\
&$200$&38 [31,47]&62 [50,75]&94 [71,118]&68 [54,92]&189 [175,206]\\
&$500$&20 [17,23]&31 [26,37]&48 [37,61]&32 [26,42]&180 [170,192]\\
&$1000$&11 [10,13]&16 [14,20]&26 [20,34]&18 [14,24]&176 [167,185]\\
&&&&&&\\[-.3cm]
(i.b)
&$100$&10 [7,14]&20 [14,28]&23 [15,32]&25 [16,46]&143 [109,187]\\
&$200$&5 [4,7]&10 [7,14]&11 [8,16]&12 [8,20]&130 [107,159]\\
&$500$&2 [2,3]&4 [3,6]&5 [3,6]&5 [4,10]&123 [108,140]\\
&$1000$&1 [1,2]&2 [2,3]&2 [2,3]&3 [2,5]&121 [109,133]\\
&&&&&&\\[-.3cm]
(ii.b)
&$100$&0.33 [0.25,0.47]&0.99 [0.66,1.51]&1.24 [0.78,1.85]&1.61 [0.84,5.63]&9 [8,11]\\
&$200$&0.18 [0.13,0.23]&0.48 [0.32,0.70]&0.57 [0.40,0.86]&0.68 [0.38,1.66]&9 [8,10]\\
&$500$&0.08 [0.06,0.10]&0.18 [0.13,0.28]&0.20 [0.14,0.31]&0.25 [0.14,0.58]&9 [8,9]\\
&$1000$&0.04 [0.03,0.05]&0.09 [0.06,0.14]&0.11 [0.07,0.16]&0.14 [0.08,0.43]&8 [8,9]\\
&&&&&&\\[-.3cm]
(iii.b)
&$100$&59 [47,75]&139 [114,167]&154 [123,194]&158 [120,282]&317 [292,345]\\
&$200$&38 [31,46]&95 [76,119]&105 [83,129]&108 [83,167]&306 [289,323]\\
&$500$&19 [16,23]&51 [41,66]&56 [43,71]&59 [42,90]&300 [289,311]\\
&$1000$&11 [10,13]&30 [23,39]&31 [25,41]&38 [26,69]&297 [290,304]\\

\hline  
\end{tabular}
\end{center}
\end{table}

\begin{table}[t]
\begin{center}
\vspace*{-.2cm}
\scriptsize
\caption{\baselineskip=14pt Simulation results for models (i.c) to (iii.c) and (i.d) to (iii.d). The numbers show  100 $\times$ Median integrated squared error [first quartile, third quartile]  calculated from $1000$ simulated samples, using our estimators  $\hat m_\LL(\cdot ; \rho_1,\rho_2)$ from Section~2.3 (NEW2) and $\hat m_\LL(\cdot ; \rho_1,\rho_2)$ from Section~2.4 (NEW3), the  naive estimator $\hat m_{\LL,{\rm naive}}$ (NAIVE) and the oracle estimator $\tilde  m_{\LL}$ (ORACLE).}
\label{table:Simul2}
\vspace*{.15cm}
\hspace*{-.1cm}
\begin{tabular}{|c|c|l|l|l|l|}
\hline
Model&$n$&ORACLE &  NEW2 & NEW3& NAIVE  \\
\hline
(i.c)
&$100$&10 [7,14]&70 [39,116]&33 [21,53]&139 [125,155]\\
&$200$&5 [4,7]&32 [18,59]&16 [11,25]&136 [125,146]\\
&$500$&2 [2,3]&12 [7,20]&6 [4,9]&140 [132,150]\\
&$1000$&1 [1,2]&6 [3,10]&3 [2,4]&146 [139,153]\\
&&&&&\\[-.3cm]
(ii.c)
&$100$&0.34 [0.25,0.47]&2.89 [1.91,4.25]&2.02 [1.32,3.30]&25 [21,30]\\
&$200$&0.18 [0.14,0.24]&1.62 [1.07,2.43]&0.96 [0.60,1.57]&26 [22,30]\\
&$500$&0.08 [0.06,0.10]&0.62 [0.39,0.97]&0.39 [0.22,0.68]&27 [25,29]\\
&$1000$&0.04 [0.03,0.06]&0.30 [0.19,0.50]&0.17 [0.10,0.33]&28 [26,30]\\
&&&&&\\[-.3cm]
(iii.c)
&$100$&60 [48,76]&179 [153,216]&153 [123,195]&446 [329,598]\\
&$200$&38 [30,46]&139 [115,164]&99 [82,122]&493 [390,612]\\
&$500$&19 [16,23]&85 [69,103]&52 [43,67]&579 [497,671]\\
&$1000$&11 [10,13]&51 [41,63]&29 [23,38]&636 [573,701]\\
&&&&&\\[-.3cm]
(i.d)
&$100$&10 [7,14]&55 [36,89]&60 [35,108]&896 [763,1033]\\
&$200$&5 [4,7]&27 [18,42]&29 [17,55]&898 [809,1004]\\
&$500$&2 [2,3]&11 [7,18]&12 [7,23]&924 [864,976]\\
&$1000$&1 [1,2]&5 [4,9]&6 [3,13]&934 [887,983]\\
&&&&&\\[-.3cm]
(ii.d)
&$100$&0.33 [0.24,0.47]&2.75 [1.78,4.06]&2.66 [1.72,4.09]&23 [20,26]\\
&$200$&0.18 [0.14,0.24]&1.39 [0.90,2.07]&1.30 [0.81,2.05]&24 [21,26]\\
&$500$&0.08 [0.06,0.10]&0.50 [0.32,0.83]&0.53 [0.31,0.98]&25 [24,27]\\
&$1000$&0.04 [0.03,0.05]&0.25 [0.14,0.41]&0.25 [0.14,0.48]&26 [25,28]\\
&&&&&\\[-.3cm]
(iii.d)
&$100$&60 [48,78]&235 [188,314]&206 [170,258]&393 [358,437]\\
&$200$&38 [31,46]&172 [141,221]&156 [128,192]&387 [360,420]\\
&$500$&19 [16,23]&101 [82,127]&101 [79,132]&408 [383,439]\\
&$1000$&11 [10,13]&62 [50,81]&64 [51,93]&431 [409,453]\\

\hline  
\end{tabular}
\end{center}
\end{table} 

In addition to the models introduced in Section~5.3, we considered the following heteroscedastic versions of models (iv.a) to (vi.a):
(iv.c) $m(\cdot)=m_1(\cdot+1)$, $X_i\sim N(0,1.5^2)$, $\sigma(x)=0.15(1+x^2)^{1/2}$;
(v.c) $m(\cdot)=m_2(\cdot+1)$, $X_i\sim N(0,1.5^2)$, $\sigma(x)=0.025(1+x^2)^{1/2}$;
(vi.c) $m(\cdot)=m_3(\cdot+0.5)$, $X_i\sim N(0,0.75^2)$, $\sigma(x)= 0.275(1+x^2)^{1/2}$.

In Tables \ref{table:Simul1} to \ref{table:Simul4}, for each method considered in our numerical study in Section~5.3, we report the first, second and third quartiles of the 1000 ISEs resulting from estimators computed on 1000 samples generated according to the model at~(2.1), in each of the settings (i.a) to (vi.c) detailed in Section~5.3.

\begin{table}[t]
\begin{center}
\vspace*{-.2cm}
\scriptsize
\caption{\baselineskip=14pt Simulation results for models (iv.a) to (vi.a) and (iv.b) to (vi.b). The numbers show 100 $\times$ Median integrated squared error [first quartile, third quartile]  calculated from $1000$ simulated samples, using our estimator from Section~3.4 (NEW3), the  naive estimator $\hat m_{\LL,{\rm naive}}$ (NAIVE) and the oracle estimator $\tilde  m_{\LL}$ (ORACLE).}
\label{table:Simul3}
\vspace*{.15cm}
\hspace*{-.1cm}
\begin{tabular}{|c|c|l|l|l|l|l|l|l|l|l|}
\hline
&&\multicolumn{3}{|c|}{ (a)}&\multicolumn{3}{c|}{ (b)}\\
Model&$n$&ORACLE & NEW3 & NAIVE&ORACLE & NEW3 & NAIVE \\
\hline
(iv)
&$100$&10 [7,14] 
&39 [25,62]&320 [259,395]
&8 [5,14]&25 [16,38]&116 [102,134]\\
&$200$&5 [4,7] 
&21 [13,32]&332 [285,388]
&4 [3,6]&12 [8,18]&115 [104,128]\\
&$500$&2 [2,3] 
&8 [5,13]&354 [321,390]
&2 [1,3]&5 [3,7]&118 [109,127]\\
&$1000$&1 [1,2] 
&4 [2,7]&369 [344,399]
&1 [1,2]&2 [2,4]&123 [116,131]\\
&&&&&&&\\[-.3cm]
(v)
&$100$&
0.34 [0.25,0.49]&1.86 [1.23,2.93]&10 [8,12] &0.29 [0.19,0.44]&1.69 [1.01,3.18]&16 [11,23]\\
&$200$&
0.17 [0.13,0.23]&0.95 [0.59,1.53]&10 [8,11] &0.16 [0.11,0.23]&0.78 [0.46,1.35]&16 [12,22]\\
&$500$&
0.08 [0.06,0.10]&0.38 [0.23,0.69]&10 [9,11] &0.07 [0.05,0.09]&0.29 [0.18,0.56]&15 [13,19]\\
&$1000$&
0.04 [0.03,0.05]&0.18 [0.11,0.32]&11 [10,12] &0.04 [0.03,0.05]&0.15 [0.09,0.29]&16 [14,19]\\
&&&&&&&\\[-.3cm]
(vi)
&$100$
&60 [48,76]&153 [115,216]&272 [255,301]&95 [74,128]&188 [155,229]&381 [282,576]\\
&$200$
&37 [30,46]&82 [65,113]&266 [253,280]&62 [48,79]&130 [108,159]&396 [313,519]\\
&$500$
&19 [17,23]&36 [29,49]&258 [249,266]&34 [27,42]&71 [57,90]&416 [343,512]\\
&$1000$
&11 [10,13]&19 [15,24]&255 [246,261]&20 [16,25]&40 [32,49]&459 [403,531]\\

\hline  
\end{tabular}
\end{center}
\end{table}

\begin{table}[t]
\begin{center}
\vspace*{-.2cm}
\scriptsize
\caption{\baselineskip=14pt Simulation results for models (iv.c) to (vi.c). The numbers show  100 $\times$ Median integrated squared error [first quartile,third quartile]  calculated from $1000$ simulated samples, using our estimator $\hat m_\LL(\cdot ; \rho_1,\rho_2)$ from Section~2.4 (NEW3), the  naive estimator $\hat m_{\LL,{\rm naive}}$ (NAIVE) and the oracle estimator $\tilde  m_{\LL}$ (ORACLE).}
\label{table:Simul4}
\vspace*{.15cm}
\hspace*{-.1cm}
\begin{tabular}{|c|c|l|l|l|}
\hline
Model&$n$&ORACLE &   NEW3& NAIVE  \\
\hline
(iv.c)
&$100$&12 [8,17]&43 [27,70]&323 [265,396]\\
&$200$&6 [4,9]&22 [14,35]&341 [292,397]\\
&$500$&3 [2,4]&8 [5,14]&358 [322,397]\\
&$1000$&2 [1,2]&4 [3,7]&372 [346,399]\\
&&&&\\[-.3cm]
(v.c)
&$100$&0.39 [0.28,0.56]&1.97 [1.29,3.28]&9.61 [7.80,11.8]\\
&$200$&0.21 [0.15,0.29]&1.00 [0.63,1.61]&9.71 [8.35,11.3]\\
&$500$&0.09 [0.07,0.12]&0.39 [0.24,0.68]&10.2 [9.29,11.3]\\
&$1000$&0.05 [0.04,0.07]&0.19 [0.12,0.34]&10.8 [9.94,11.7]\\
&&&&\\[-.3cm]
(vi.c)
&$100$&39 [29,51]&143 [101,202]&272 [254,294]\\
&$200$&21 [16,26]&74 [54,106]&265 [253,277]\\
&$500$&9 [8,11]&26 [20,36]&258 [249,265]\\
&$1000$&5 [4,6]&12 [10,17]&254 [246,260]\\

\hline  
\end{tabular}
\end{center}
\end{table}

First we discuss the results from Table~\ref{table:Simul1}. In cases (i.a) to (iii.a), $EX$ and $EY$ are relatively close to zero, and unsurprisingly, the estimator from Section~3.3 did not work as well as the one from Section~3.4, which, although it worked well, was outperformed by the estimator from Section~3.2. In cases (i.b) to (iii.b), all quantities are far from zero, so that the estimator from Section~3.4 was outperformed by the one from Section~3.3, which was itself outperformed by the estimator from Section~3.2. 

The results from Table~\ref{table:Simul2} show that, in cases (i.c) to (iii.c), $EX$ and $EY$ are close to zero so that the estimator from Section~3.3  encountered problems and was significantly outperformed by the estimator from Section~3.4, whereas in cases (i.d) to (iii.d), where $EX$ and $EY$ are far from zero, the estimator from Section~3.3 typically worked best.

Table~\ref{table:Simul3} shows simulation results in cases where the only estimator we could apply was the one from Section~3.4. The results there indicate that, while this estimator was of course outperformed by the oracle estimator that can only be computed when the error-free data are available, it worked well and its performance improved as sample size increased. In Table~\ref{table:Simul4} we show the results obtained in the heteroscedastic version of models (iv.a)  to (vi.a). We can see that the method from Section~3.4 worked well in those cases too.

\section{Details for additive models in Section~6}\label{app:addmod}
In the additive model considered in  Section~6, instead of taking $\hat m_j=0$ as initial estimators, following Section~7.2 of Fan and Gijbels~(1996) we could start with a linear approximation of the model in (6.4).
There we would use the approximation $ Y = m^0_0 + \sum_{j=1}^d m^0_j   X_j  + \varepsilon$, and compute estimators $\hat{m}_0^0, \hat{m}^0_1,\ldots,\hat{m}^0_d$ of $m_0^0, m^0_1,\ldots,m^0_d$  using the methods of \c{S}ent\"urk and M\"uller~(2005a, 2006).
Then, recalling the condition $E \{ m_j(X_j) \}=0$ for $j=1,\ldots, d$, which ensures identifiability of the functions $m_j$'s, we would rewrite our approximation as
$ Y = m^0_0 + \sum_{j=1}^d m^0_j   X_j  + \varepsilon \,\sigma({\bf X})= ( m^0_0 +  \sum_{j=1}^d m^0_j \bar X_j )  + \sum_{j=1}^d m^0_j  \cdot( X_j-\bar X_j) + \varepsilon\,\sigma({\bf X})$, where, for $j=1,\ldots,d$, $\bar X_j= n^{-1}\sum_{i=1}^n\,\tilde X_{ij}$.
In this notation, recalling also the notation $m_0,m_1,\ldots, m_d$ in (6.4), we would take our initial estimators equal to $\hat{m}_0 = \hat{m}_0^0 +  \sum_{j=1}^d \hat{m}^0_j \bar X_j$ and, for $j=1,\ldots,d$, $\hat m_j(x)=\hat m_j^0 \cdot (x-\bar X_j)$. 
The subsequent updates would be obtained in the same way as those for the algorithm described in the previous paragraph.

\section{Proof of Theorem~4.2}\label{sec:proofTh2}

We shall prove only part (ii), since part (i) can be derived by adapting the results on the rate of convergence in the sup-norm for the local constant estimator $\hat{m}_{\NW}$ as in Theorem~4.1. To prove the asymptotic normality for $\hat{m}_{\LL}$, we introduce the following notation. As in the proof of (4.1), let $\alpha$ be a constant between $\alpha_0$ and $\lambda_1 =\min(2\beta_1, 1/2-\beta_1/2)=1/2-\beta_1/2$, and write $\beta_1=(1+\xi_0)/5$ for some $\xi_0\geq 0$. Moreover, let $	\mathbf{v}_i(x ; w)  = \left(v_{1i}(x ; w) , v_{2i}(x ; w) \right)\T =  \left(1, \{X_i w(U_i)-x \}/h \right)\T $, $\mathbf{S}_n(x;w)   = n^{-1} \sn   K_h\{X_i w(U_i) - x\} \mathbf{v}_i(x ; w) \mathbf{v}_i(x ; w)\T  $ and
\begin{align}
	 \mathbf{T}_n(x; w_1, w_2) & = \left( T_{1n}(x; w_1, w_2) , T_{2n}(x; w_1, w_2)\right)\T  \nn \\
	&= n^{-1} \sn K_h\{X_i  w_1(U_i) -x \} \mathbf{v}_i(x ;w_1)  w_2(U_i) Y_i  \in \br^2 ,  \label{def.Tb}
\end{align}
where $w, w_1, w_2$ are functions $[0, 1] \mapsto \br$. In particular, we put $\mathbf{v}_{i}(x) = \mathbf{v}_i(x ; w_0) = (1, h^{-1}(X_i-x))\T $ and $\mathbf{S}_n(x) = \mathbf{S}_n(x ; w_0)$ for $w_0 \equiv 1$ as in (7.2).

In the above notation, the local linear estimator $\hat m_{\LL}$ given in (3.5) can be written as 
\begin{align}
\hat m_{\LL}(x) & = \mathbf{e}_1\T \mathbf{S}^{-1}_n(x ; \hat w_X ) \, \mathbf T_n(x ; \hat w_X, \hat w_Y)  \nn \\
&= \mathbf{e}_1\T \mathbf S_n^{-1}(x ; \hat w_X )  \, \mathbf T_n(x ; \hat w_X, \hat w_Y -w_0 )+\mathbf{e}_1\T \mathbf S^{-1}_n(x ;\hat w_X )  \, \mathbf T_n(x; \hat w_X, w_0 ) \nn \\
	& \equiv  \hat m_{\LL,1}(x) + \hat m_{\LL,2}(x),   \label{mLL.dec} 
\end{align}
where $\hat w_X$ and $\hat w_Y$ are as in (7.2).

First we study $\hat m_{\LL,2}$. Noting the model at (2.1), a standard decomposition gives 
\begin{align*}
	\mathbf T_n(x ; \hat w_X, w_0 ) & =     m(x) \, n^{-1} \sn K_h\{X_i   \hat w_X(U_i) -x \} \mathbf{v}_i(x ; \hat w_X) \\
	& \  \ + n^{-1} \sn K_h\{X_i  \hat w_X(U_i) -x \} \mathbf{v}_i(x ;\hat w_X)  \{ m(X_i) -m(x) - m'(x)(X_i -x)\}    \\ 
	& \ \ +   m'(x) \, n^{-1}\sn K_h\{X_i   \hat w_X(U_i) -x \} \mathbf{v}_i(x;\hat w_X)   (w_0 -\hat w_X)(U_i) \, X_i \\
	& \ \ +   m'(x) \, n^{-1}\sn K_h\{X_i \hat w_X(U_i) -x \} \mathbf{v}_i(x;\hat w_X)   \{ X_i  \hat w_X(U_i) -x \} \\
	& \ \ + n^{-1} \sn K_h\{X_i   \hat w_X(U_i) -x \} \mathbf{v}_i(x;\hat w_X)  \sigma(X_i) \varepsilon_i \\
	& \equiv    \mathbf{Q}_{n,1}(x ;\hat w_X) +  \mathbf{Q}_{n,2}(x; \hat w_X)+  \mathbf{Q}_{n,3}(x; \hat w_X) +   \mathbf{Q}_{n,4}(x; \hat w_X)+  \mathbf{Q}_{n,5}(x; \hat w_X),
\end{align*}
and note that $ \mathbf{e}_1\T \mathbf{S}^{-1} _n(x ; \hat w_X )\, \mathbf{Q}_{n,1}(x; \hat w_X) = m(x)$, $\mathbf{e}_1\T \mathbf{S}_n(x; \hat w_X )^{-1} \, \mathbf{Q}_{n,4}(x;\hat w_X) = 0$. Hence, we only need to deal with $\mathbf{Q}_{n,2}(x; \hat w_X)$, $\mathbf{Q}_{n,3}(x;\hat w_X)$ and $\mathbf{Q}_{n,5}(x; \hat w_X)$.

\noindent 
{\it Step 1. Controlling the stochastic error term $\mathbf{e}_1\T \mathbf S_n(x ; \hat w_X)^{-1} \, \mathbf Q_{n,5}(x;\hat{w}_X)$}. Since $\alpha<\lambda_1$, for $\mathcal{E}_n(\alpha)$ as in (7.14) we have $P\{ \mathcal{E}_n(\alpha) \} \to 1$ as $n\to \infty$. This, together with \eqref{dif.1} implies that for any $\kappa_1\in (0, 1/2+3\alpha/4 - 3\alpha_0/2 - \xi_0/8)$,
\be 
   \mathbf Q_{n,5}(x ;\hat{w}_X) = \mathbf Q_{n,5}(x; w_0) + O_P  (n^{-\kappa_1 } ),  \label{pf.2.0}
\ee
where $\mathbf Q_{n,5}(x;w_0) \equiv n^{-1} \sn K_h(X_i-x) \mathbf{v}_i(x) \sigma(X_i) \varepsilon_i$ with $\mathbf v_i(x)=(1, h^{-1}(X_i-x))\T$.

For $\mathbf S_n(x ; \hat w_X)$, using arguments similar to those used to prove (7.11) and noting that $\delta_{0n}(g_1)=O_P\{ (ng_1/\log n)^{-1/2} \}$ for $g_1=g_{1n}\asymp n^{- \beta_1 }$ with $\beta_1\geq 1/5$, it can be shown that
\begin{equation}
 	\left\| \mathbf  S_n(x; \hat{w}_X)   - \mathbf{S}_n(x) \right\|_\infty    = O_P \{ h^{-1}(ng_1)^{-1/2} (\log n)^{1/2} \}. \label{pf.2.1}
\end{equation}
Here and below, $\| \mathbf{a} \|_\infty= \max_k |a_k|$ and $\| \mathbf A \|_\infty =\max_{k,\ell} |A_{k \ell}|$ denote, respectively, the elementwise $\ell_\infty$-norm of a vector $\mathbf{a}=(a_k)$ and of a matrix $\mathbf A=(A_{k\ell})$.

For $\mathbf  Q_{n,5}(x;w_0)$, since $\varepsilon_i$ and $X_i$ are independent and $E(\varepsilon_i) =0$, a direct consequence of Condition (B5) is that
\be
	 \left\| \mathbf  Q_{n,5}(x;w_0) \right\|_\infty = O_P \{ (nh )^{-1/2} \} = O_P ( n^{-1/2 +\alpha_0/2}  ) . \label{Tn5.uniform.rate}
\ee

Next, for $\mathbf S_n(x)$ in \eqref{pf.2.1}, define $\mathbf S^*_n(x) = E \{\mathbf S_n(x) \}$ and note that $\| \mathbf S_n(x)  - \mathbf S^*_n(x) \|_\infty =O_P\{  (nh)^{-1/2}  \}$. It is also straightforward to show that $\mathbf S_n^*(x) = f_X(x) \,\mathbf S + O(h)$, where $\mathbf S = \mbox{diag}\,(\mu_{K,0}, \mu_{K,2})$ is strictly positive-definite. Since $x\in [a,b] \subset I_X$, then $f_X(x)>0$ and we can apply to $S_n(x)$ and $f_X(x) \, \mathbf{S}$ the approximation $(A+\varepsilon B)^{-1}=A^{-1} - \varepsilon A^{-1} B A^{-1} + O(\varepsilon^2)$ as $\varepsilon\rightarrow 0$ which holds for any invertible matrix $A$. Proceeding that way we get
\be 
 \left\| \mathbf S^{-1}_n(x)    - \{ f_X(x) \}^{-1} \mathbf S^{-1}  \right\|_\infty =O_P\{ (nh )^{-1/2} +h \} = O_P(h) .   \label{matrix.inverse.consistency.2}
\ee
Recall that $\alpha< \lambda_1$, the right-hand side of \eqref{pf.2.1} is of order $o_P(n^{-\alpha+\alpha_0})$. Hence, a similar argument to that used to prove \eqref{matrix.inverse.consistency.2} leads to
\be
	  \left\| \mathbf S^{-1}_n(x ; \hat w_X)  - \mathbf S^{-1}_n( x )  \right\|_\infty  = o_P ( n^{-\alpha+\alpha_0}  ).  \label{matrix.inverse.consistency.1}
\ee

Together, \eqref{pf.2.0}, \eqref{Tn5.uniform.rate} and \eqref{matrix.inverse.consistency.1} imply
\be  
	  \mathbf{e}_1\T  \mathbf S^{-1}_n(x ;\hat w_X )  \, \mathbf Q_{n,5}(x ; \hat w_X)  =  \mathbf{e}_1\T \mathbf  S^{-1}_n(x )  \, \mathbf Q_{n,5}(x ; w_0) + O_P (n^{-\ka_1}  ).  \label{Tn5.comparison}
\ee

\noindent 
{\it Step 2. Controlling the bias term $\mathbf{e}_1\T \mathbf S^{-1}_n(x;\hat{w}_X)  \,\mathbf  Q_{n,2}(x;\hat{w}_X)$}. For $\mathbf Q_{n,2}(x ;\hat w_X)$, letting $\bar{K}(t)=tK(t)$ and using (7.9) and the argument used to prove (7.27), it can be shown that both
$   n^{-1} \sn  | K_h(\hat{X}_i -x) - K_h(X_i-x) | \cdot  | m(X_i)  - m(x)  - m'(x)(X_i-x) |$ and $  n^{-1} \sn | \bar{K}_h(\hat{X}_i -x) - \bar{K}_h(X_i-x) |  \cdot  | m(X_i)  - m(x) - m'(x)(X_i-x) |$ are of order $O_P \{  h \delta_{0n}(g_1)  \} = o_P ( n^{-\alpha_0-\alpha} )$. This implies that
\begin{equation}
	  \big\| \mathbf  Q_{n,2}(x ;\hat w_X) - \mathbf Q_{n,2}(x ; w_0 ) \big\|_\infty = o_P (n^{-\alpha_0- \alpha}  ),  \label{Tn2.difference}
\end{equation}
where $\mathbf Q_{n,2}(x;w_0)  \equiv  n^{-1}\sn K_h(X_i-x) \mathbf{v}_i(x) \{ m(X_i) - m(x) -m'(x)(X_i-x) \}$. Under Condition (B3), $| m(X_i) - m(x) -m'(x)(X_i-x)  | I( |X_i-x| \leq h ) \leq  \frac{1}{2} \| m'' \|_\infty \, h^2$ holds almost surely. Therefore, the argument leading to (7.20) can be used to prove that $ \| \mathbf Q_{n,2}(x ; w_0 )  \|_\infty  =  O_P(h^2) = O_P (n^{-2\alpha_0} )$. Together with \eqref{matrix.inverse.consistency.1} and \eqref{Tn2.difference}, this implies
\be
	  \mathbf{e}_1\T \mathbf  S^{-1}_n(x; \hat{w}_X)  \, \mathbf Q_{n,2}(x ; \hat w_X)  =   \mathbf{e}_1\T \mathbf S^{-1}_n(x ) \, \mathbf Q_{n,2}(x ; w_0)  + o_P  (n^{-\alpha_0 -\alpha  }  ). \label{Tn2.comparison}
\ee

\noindent 
{\it Step 3. Bounding the term $\mathbf{e}_1\T \mathbf S^{-1}_n(x;\hat{w}_X) \, \mathbf Q_{n,3}(x;\hat{w}_X)$}. Under Condition (B6), property (7.9) with $Z=X$, and arguments similar to those employed to bound $J_2(x)$ in (7.22), can be used to show that
$
 \left\| \mathbf Q_{n,3}(x ; \hat w_X) - m'(x) n^{-1}\sn K_n(X_i-x)\mathbf v_i(x) (w_0-\hat w_X)(U_i) \, X_i \right\|_\infty 
 = O_P \{ (ng_1 h/\log n)^{-1} \} = o_P(h^2) .
$
Further, using \eqref{matrix.inverse.consistency.1} we obtain
\begin{equation}
   \mathbf{e}_1\T \mathbf S^{-1}_n(x; \hat w_X )  \, \mathbf Q_{n,3}(x ; \hat w_X)    =  m'(x) \Delta_X(x)+ o_P(h^2),  \label{Tn3.diff}  
\end{equation}
where $ \Delta_X(x) \equiv  \mathbf{e}_1\T \mathbf S^{-1} _n(x ) \, n^{-1} \sn K_h(X_i -x)   \mathbf v_{i}(x)   ( w_0 -\hat w_X)(U_i)\, X_i$. This, together with \eqref{matrix.inverse.consistency.2} and the bound
$\| n^{-1}\sn K_h(X_i -x)   \mathbf v_{i}(x)   ( w_0 -\hat w_X)(U_i)\, X_i  \|_\infty =O_P \{ (ng_1 )^{-1/2}  \}
$ leads to 
\begin{align}
	 \Delta_X(x) &  = \mathbf{e}_1\T \mathbf  S^{-1} \{n f_X(x)\}^{-1}\sn K_h(X_i -x) \mathbf v_{i}(x)  ( w_0 -\hat w_X)(U_i)\, X_i  + o_P( h^2 )  \nn \\  
	& =  \{ n f_X(x) \}^{-1} \sn K_h(X_i-x) ( w_0 -\hat w_X)(U_i)\, X_i  + o_P(h^2). \label{DeX.equiv}
\end{align}
Assembling \eqref{Tn3.diff} and \eqref{DeX.equiv} we obtain that
\begin{equation}
  \mathbf{e}_1\T \mathbf S^{-1}_n(x; \hat w_X ) \, \mathbf Q_{n,3}(x ; \hat w_X)  
       =  \frac{ m'(x) } { n f_X(x)  } \sn K_h(X_i-x)  ( w_0 -\hat w_X)(U_i)\,  X_i + o_P(h^2). \label{Tn3.comparison}  
\end{equation}

\noindent 
{\it Step 4. Approximation to $\hat{m}_{\LL,2}$ in \eqref{mLL.dec}}. Now, let $\tilde m_{\LL}(x) =  \mathbf{e}_1\T \mathbf S^{-1}_n(x ) n^{-1}\sn K_h(X_i-x)\mathbf v_i(x) Y_i$ be the local linear regression estimator of $m(x)=E(Y_i|X_i=x)$ that we would use if the $(Y_i,X_i)$'s were available. A standard argument, see for example, (A1) and (A2) in Delaigle et al.~(2009), shows that $\tilde m_{\LL}(x)$ can be written as
$$
	\tilde m_{\LL}(x) =  m(x)  +  \mathbf{e}_1\T \mathbf S^{-1}_n(x ) \left\{\mathbf Q_{n,2}(x;w_0) + \mathbf Q_{n,5}(x;w_0) \right\},
$$
where $\mathbf Q_{n,2}(x;w_0) $ and $\mathbf Q_{n,5}(x;w_0) $ are as in \eqref{Tn2.difference} and \eqref{pf.2.0}, respectively. In this notation, it follows from \eqref{Tn5.comparison}, \eqref{Tn2.comparison} and \eqref{Tn3.comparison} that, for $\hat{m}_{\LL,2}$ as in \eqref{mLL.dec},
\begin{equation} 
	 \hat m_{\LL,2}(x)  -  \tilde m_{\LL}(x) 
	 = \frac{ m'(x)}{ n  f_X(x)}   \sn K_h(X_i-x) (w_0 -\hat w_X)(U_i)\, X_i  + O_P ( n^{- \kappa_1 } ) + o_P(h^2).  \label{mLL2.approximation}
\end{equation}

\noindent 
{\it Step 5. Approximation to $\hat{m}_{\LL,1}$ in \eqref{mLL.dec}}. Next we turn to 
$
	\hat m_{\LL,1}(x)= \mathbf{e}_1\T \mathbf S^{-1}_n(x;  \hat w_X ) \,\allowbreak \mathbf T_n(x ; \hat w_X, \hat w_Y -w_0 ) ,
$ 
where $\mathbf T_n=(T_{1n}, T_{2n})\T$ is as in \eqref{def.Tb}. By (7.9) and (7.24), arguments similar to those used to bound $J_2(x)$ in (7.22) yield, on this occasion,
\begin{align*} 
&  \left| T_{1n}(x; \hat{w}_X, \hat w_Y- w_0 ) - T_{1n}(x; w_0, \hat w_Y- w_0) \right| \\
		& \leq   \| K' \|_\infty  \| \hat w_Y -w_0 \|_\infty \| \hat{w}_X -w_0 \|_\infty     (nh^2)^{-1} \sn |X_i Y_i| I\big( |X_i-x|\leq h \, \mbox{ or } \, |\hat{X}_i -x |\leq h \big) \\
		& = O_P \big[  h^{-1} (ng_1)^{-1/2}  \{ g_2^2 + (ng_2)^{-1/2}\}  \log n  \big] =  o_P \{  g_2^2 + (nh)^{-1/2}  \},
\end{align*}
where the last equality follows from Condition (B6). An analogous argument leads to the same bound for $  | T_{2n}(x; \hat{w}_X , \hat w_Y-  w_0 ) - T_{2n}(x; w_0, \hat w_Y- w_0 )  |$. Combining the above calculations with \eqref{matrix.inverse.consistency.1} we obtain
$\hat m_{\LL,1}(x)  = \mathbf{e}_1\T \mathbf S^{-1}_n(x, \hat w_X ) \, \mathbf  T_n(x ; w_0, \hat w_Y -w_0 )  +o_P(g^2) =\De_Y(x)+ o_P \{  g_2^2 + (nh)^{-1/2}  \}$, where $\De_Y(x) \equiv \mathbf{e}_1\T \mathbf S^{-1}_n(x )  \, n^{-1} \sn K_h(X_i-x)  \mathbf v_i(x) (\hat w_Y- w_0)(U_i)\,Y_i$. The argument leading to \eqref{DeX.equiv} now gives
$
	\De_Y(x)  =  \{n f_X(x)\}^{-1} \sn K_h(X_i - x)(\hat w_Y- w_0)(U_i)\, Y_i + o_P(h^2)  .
$
We deduce that
\be
	\hat m_{\LL,1}(x) = \{n f_X(x)\}^{-1} \sn K_h(X_i - x)(\hat w_Y- w_0)(U_i)\, Y_i  + o_P \{ g_2^2 + h^2+(nh)^{-1/2}  \} . \label{mLL1.approximation}  
\ee

\noindent 
{\it Step 6. Completion of the proof of (4.2)}. For the standard local linear estimator $\tilde{m}_{\LL}$ of $m$ that we would use if the $(X_i, Y_i)$'s were available, it is known that
\begin{equation}
	\tilde{m}_{\LL}(x)- m(x) = \sqrt{ V(x) }\,  N(x) + B_1(x) + o_P \{ h^2 + (nh)^{-1/2}  \} , \label{CLT.local.linear}
\end{equation}
where the distribution of $N(x)$ converges to the standard normal distribution, and the functions $V$ and $B_1$ are given by $V(x)   = \{ nh f_X(x)\}^{-1}   \sigma^2(x) \int K^2(u) \, du$ and $B_1(x)  =   h^2m''(x) \mu_{K,2}/2$, respectively. See, for example, Delaigle et al.~(2009), setting there the measurement errors to zero.

In the present context, for $h \asymp n^{-\alpha_0}$ with $\alpha_0 \in ( 0,  1/2-\beta_1)$, we may take $\alpha$ and $\kappa_1$ in the same way as in the paragraph after (7.53) so that $n^{-\kappa_1}=o\{(nh)^{-1/2}\}$. Finally, combining \eqref{mLL2.approximation}-- \eqref{CLT.local.linear} and Lemmas~\ref{lm.0} and \ref{lm.1} proves (4.2). \qed

\section{Proof of Theorem~4.3}\label{sec:proofTh3}
\noi
{\it Proof of (i).}\\
We adopt the notation in the proof of Theorem~4.1.  Let $M_0=\max_{u\in I_U}|\varphi_0(u)|$ and note that from (2.2) and (7.1), we have $M_0 \geq |\mu_0|$. For $x\in \br$ and $\lambda>0$, define
\begin{equation}
	\hat{f}_{\hat{X}}(x ; \lambda)   =  n^{-1} \sum_{i\in \hat{\mathcal{C}}_n(\lambda) } K_h( x-  \hat X_i )  =n^{-1} \sn K_h(x-\hat{X}_i) I\{ U_i \in \hat{\mathcal{L}}_n(\lambda)  \}.\label{hat.fx}
\end{equation}
Recalling (7.3), we can write
\begin{align}
	 \hat m(x ;  \rho) -m(x)   & = \{n \hat{f}_{\hat{X}}(x ; \rho)\}^{-1} \sum_{i\in \hat{\mathcal{C}}_n(\rho) }K_h(x-\hat{X}_i)\{ \hat{w}_Y(U_i)- w_0(U_i)\} Y_i \nn \\
	& \quad  +  \{n \hat{f}_{\hat{X}}(x ; \rho)\}^{-1}  \sum_{i\in \hat{\mathcal{C}}_n(\rho) }K_h(x-\hat{X}_i)\{m(X_i)-m(x)\}  \nn \\
	& \quad +   \{n \hat{f}_{\hat{X}}(x ; \rho)\}^{-1}  \sum_{i\in\hat{\mathcal{C}}_n(\rho) } K_h(x-\hat{X}_i)  \sigma(X_i) \varepsilon_i \nn \\
	& \equiv  \hat{\Pi}_{1}(x ) + \hat{\Pi}_{2}(x ) +\hat{\Pi}_{3}(x ) , \label{hatm-m.decomposition}
\end{align}
where $w_0\equiv 1$ is as in (7.2). Our proof consists of analysing $\hat{f}_{\hat{X}}(x ;  \rho)$, $\hat{\Pi}_{1}(x )$, $\hat{\Pi}_{2}(x )$ and $\hat{\Pi}_{3}(x )$ separately.

\noindent
{\it Step 1.}
We first deal with $\hat{f}_{\hat{X}}(x ;  \rho)$. For $x\in \br$ and $\lambda >0$, define
$
	\hat{f}_X(x;\lambda )  = n^{-1} \sn K_h(x-X_i) I\{ U_i \in \hat{\mathcal{L}}_n(\lambda)  \}.
$
Analogously to (7.11), we claim that
\be
	\max_{x\in [a,b]} \big| \hat{f}_{\hat{X}}(x; \rho) - \hat{f}_X(x;\rho) \big| = O_P \{  h^{-1}\delta_{0n}(g_1)  \} = O_P  \{  h^{-1}\delta_{0n}(g_1) \} .  \label{thm3.4}
\ee
To prove \eqref{thm3.4}, recalling the notation at (7.2), we note that, for every $u\in \hat{\mathcal{L}}_n( \rho) $,
\begin{equation}
	   \left| \hat{w}_X(u) - w_0(u) \right|   = \frac{| \hat{\mu}_0 \, \varphi(u) - \hat{\varphi}_{0,\LL}(u) | }{|\hat{\varphi}_{0,\LL}(u)|}   \leq  \rho^{-1} \left\{ |\hat{\mu}_0 - \mu_0|\,M_0 + |\hat{\varphi}_{0,\LL}(u) - \varphi_0(u) | \right\} , \label{thm3.5}
\end{equation}
where we used (7.8) with the superscript ``+'' removed. This implies that
\begin{align}
	& \max_{x\in [a, b]}\left|  \hat{f}_{\hat{X}}(x; \rho ) - \hat{f}_X(x; \rho ) \right|  \nn \\
& \leq    \| K' \|_\infty  \, \rho^{-1}  \left( |\hat{\mu}_0 - \mu_0|\,M_0 + \| \hat{\varphi}_{0,\LL} - \varphi_0 \|_\infty \right)   \nn \\
&  \quad    \times (nh^2)^{-1}    \max_{x\in [a, b] }  \sn |X_i|  I\{ | X_i-x| \leq h  \, \mbox{ or } \,  | X_i \hat{w}_X(U_i) - x | \leq h \}  I\{ U_i \in \hat{\mathcal{L}}_n(\lambda)  \}.   \label{thm3.6}
\end{align}
By \eqref{thm3.5} and \eqref{thm3.6}, the arguments we used before to deal with (7.13) now lead to \eqref{thm3.4}.

Next we study $\hat{f}_X(x; \rho)$. As in the proof of part (i) of Theorem~4.1, let $\alpha\in(1/5 , 2/5)$ be a constant, and define the event
\be
	\mathcal{G}_n(\alpha) = \left\{    \| \hat{\varphi}_{0,\LL} -  \varphi_0 \|_\infty \leq n^{-\alpha}   \right\} . \label{event.G}
\ee
By (7.6) with $\ell=0$ and $g_1 \asymp (n/\log n)^{-1/5}$, we have $P\{\mathcal{G}_n(\alpha)\} \rightarrow 1 $ as $n\to \infty$. 

Let $t_0 \in (\rho, M_0)$ be such that $\mathcal{L}(t_0)$ has positive Lebesgue measure and note that, for every $u\in \mathcal{L}(t_0)$, 
$
	|\hat{\varphi}_{0,\LL}(u)|\geq |\varphi_0(u)|-\| \hat{\varphi}_{0,\LL}-\varphi_0 \|_\infty \geq t_0 -   \| \hat{\varphi}_{0,\LL}-\varphi_0 \|_\infty.
$
Hence, on the event $\mathcal{G}_n(\alpha)$ with $n$ sufficiently large so that $t_0 -n^{-\alpha} \geq \rho$, the set $\mathcal{L}(t_0)$ is contained in $\hat{\mathcal{L}}_n(\rho)  $. Therefore, for every $x\in [a,b]$, $ \hat{f}_X(x; \rho) \geq n^{-1} \sn K_h(x-X_i ) I  \{  U_i \in \mathcal{L}(t_0 ) \}$. By Theorem~6 in Hansen~(2008), we have
\be
	n^{-1} \sn K_h(x-X_i ) I \{  U_i \in \mathcal{L}(t_0 ) \} =  p_0  f_X(x)  + O_P\{ \delta_{0n}(h) \}   \nn 
\ee
uniformly over $x\in [a,b]$, where under Condition (B2),
$$
  p_0  \equiv  P \{ |\varphi(U)|\geq t_0  \} = \int_0^1 I \{ u\in \mathcal{L}(t_0) \} f_U(u) \, du \geq \inf_{u\in [0,1]} f_U(u) \cdot \lambda( \mathcal{L}(t_0)) >0.
$$
Here, $\lambda(A)$ stands for the Lebesgue measure of $A$ if $A\subset \br$ is Lebesgue measurable. The last two displays, together with \eqref{thm3.4} yield
\begin{equation}
  \hat{f}_{\hat{X}}(x;\rho) \geq p_0 f_X(x)   + O_P \{ h^{-1} \delta_{0n}(g_1)  \} =p_0 f_X(x) +o_P(1)  \label{hatfxrho.lbd}
\end{equation}
uniformly in $x\in [a,b]$, from which we conclude that with probability tending to 1, $\hat{f}_{\hat{X}}(x;\rho)$ is bounded away from zero on $[a,b]$.

\noindent
{\it Step 2.}
For the numerator of $\hat{\Pi}_{1}(x)$, by \eqref{thm3.5},  arguments similar to those leading to (7.22), (7.23) and (7.25) can be used to prove that
\begin{equation}
	  \max_{x\in [a,b]} \Big| n^{-1} \sum_{i\in \hat{\mathcal{C}}_n(\rho) }  K_h(x-\hat{X}_i) ( \hat{w}_Y-w_0)(U_i) Y_i  \Big|  = O_P \{ \delta_{0n}(g_2)  \}. \label{numerator.Pi1.rate}
\end{equation}
Together with \eqref{hatfxrho.lbd}, this gives
\begin{equation}
	\max_{x\in [a,b]} \big| \hat{\Pi}_1(x) \big| = O_P \{ \delta_{0n}(g_2) \}. \label{Pi1.rate}
\end{equation}

\noindent
{\it Step 3.}
For $\hat{\Pi}_{3}(x)$, it suffices to control the stochastic error
$$
\max_{x\in [a,b]} \bigg| n^{-1} \sn \sigma(X_i) K_h(x-\hat{X}_i )  I \{ U_i \in \hat{\mathcal{L}}_n(\rho)  \} \varepsilon_i \bigg|.
$$
Since $\hat \varphi_{0,\LL}(U_i)$ is a measurable function of $\{(X_i,U_i)\}_{i=1}^n$,  $I  \{ U_i \in \hat{\mathcal{L}}_n(\rho)  \} = I\{ | \hat{\varphi}_{0, \LL}(U_i) | \geq \rho  \}$ is independent of $\varepsilon_i$. With an argument similar to that used in (7.36)--(7.39), it can be shown that
\be
\max_{x\in [a,b]} \big| \hat{\Pi}_3(x) \big| = O_P \{ (nh/\log n)^{-1/2}  \}. \label{Pi3.rate}
\ee

\noindent
{\it Step 4.}
Turning to $\hat{\Pi}_2(x)$, observe that $\hat{f}_{\hat{X}}(x;\rho)\, \hat{\Pi}_2(x)$ can written as
\begin{align}
	& n^{-1} \sn  \{ K_h(x-\hat{X}_i) - K_h(x-X_i) \}\{m(X_i)-m(x)\}I \{ U_i \in \hat{\mathcal{L}}_n(\rho)  \}  \nn \\
	&  \quad  + n^{-1}\sn K_h(x-X_i) \{ m(X_i) -m(x) \} I \{ U_i \in \hat{\mathcal{L}}_n(\rho)  \}  \equiv   \hat{\Pi}_{21}(x) + \hat{\Pi}_{22}(x) .  \label{Pi2.dec}
\end{align}

For $\hat{\Pi}_{21}(x)$, in view of \eqref{thm3.6}, it can be proved that
\be
	\max_{x\in[a,b]} \big| \hat{\Pi}_{21}(x) \big| =   O_P \{\delta_{0n}(g_1)  \}. \label{Pi21.rate}
\ee

Next we focus on $\hat{\Pi}_{22}(x)$. Set
\be
	R_{2n}(x) =  n^{-1}\sn K_h(x-X_i) \{ m(X_i) -m(x) \} I\{ U_i \in  \mathcal{L}(\rho)  \},  \label{Pi22.def}
\ee
and note that for each $1\leq i\leq n$,
\begin{align}
\big| I \{ U_i \in \hat{\mathcal{L}}_n(\rho)  \}  - I \{ U_i \in  \mathcal{L}(\rho)  \} \big| 
 & =  \big| I  \{|\hat{\varphi}_{0,\LL}(U_i)| \geq \rho  \} - I \{  |\varphi_0(U_i)| \geq \rho \}  \big| \nn \\
 & \leq    I\{ \rho - \| \hat{\varphi}_{0,\LL} - \varphi_0 \|_\infty \leq    |\varphi_0(U_i)| \leq \rho + \| \hat{\varphi}_{0,\LL} - \varphi_0 \|_\infty \}. \nn
\end{align}
Together with \eqref{event.G}, this implies that on the event $\mathcal{G}_n$,
\begin{equation}
   \max_{x\in [a,b]} \big|   \hat{\Pi}_{22}(x) - R_{2n}(x)  \big|      \leq   \| K \|_\infty \| m' \|_\infty \max_{x\in [a,b]}  n^{-1} \sn I ( |X_i-x|\leq h, U_i \in A_n ), \label{Pi22.diff}
\end{equation}
where $A_n \equiv  \left\{ u\in I_U: \rho- n^{-\alpha} \leq |\varphi_0(u)| \leq \rho + n^{-\alpha} \right\}$. Since $\rho \in {\Theta}$,  $\varphi_0$ takes the value $\rho$ only at finitely many points $u_1,\ldots, u_q$ for some $q\geq 1$ which is independent of $n$. For each $u_k$, by the inverse function theorem, there exits a neighbourhood $B_k$ of $u_k$ such that $\varphi_0$ is invertible on $B_k$ and the inverse, denoted by $\varphi_{k,{\rm inv}}$, is continuously differentiable with $\varphi_{k, {\rm inv}}'(t)=1/\varphi'_0\{\varphi_{k,{\rm inv}}(t)\}$ for $t\in R_k=\{\varphi_0(u):  u\in B_k\}$. By Condition (C2), $ \min_{1\leq k\leq q}|\varphi'_0(u_k)|>0$ and hence $b_0\equiv  \max_{1\leq k\leq q}\max_{t\in R_k} |\varphi_{k,{\rm inv}}'(t)|<\infty$. Since $\rho=\varphi_0(u_k)$ is an interior point of $R_k$, for all sufficiently large $n$, $[\rho-n^{-\alpha}, \rho+n^{-\alpha}] \subseteq \bigcap_{k=1}^q R_k$ and $A_n^+ \equiv   \left\{ u\in I_U: \rho- n^{-\alpha} \leq \varphi_0(u) \leq \rho + n^{-\alpha} \right\} = \bigcup_{k=1}^q \varphi_{k,{\rm inv}}\left( [\rho-n^{-\alpha}, \rho+n^{-\alpha}] \right)$. For every $u\in A_n^+$, there exists some $1\leq k\leq q$ and $t\in [\rho-n^{-\alpha}, \rho+n^{-\alpha}]$ such that $u=\varphi_{k,{\rm inv}}(t)$ and 
$$
	|u - u_k | = |\varphi_{k,{\rm inv}}(t) - \varphi_{k,{\rm inv}}(\rho)| \leq b_0 |t-\rho | \leq b_0  n^{-\alpha}.
$$
This implies that for all sufficiently large $n$,
\be
	A^+_n \subseteq B_n  \equiv \bigcup_{k=1}^q [ u_k -  b_0 n^{-\alpha  }, u_k +  b_0 n^{-\alpha } ] \cap I_U. \label{set.An}
\ee
The set $A_n^- =\{ u\in I_U: -\rho- n^{-\alpha} \leq  \varphi_0(u)  \leq  -\rho + n^{-\alpha} \} $ can be dealt with in a similar way, so that \eqref{set.An} also holds with $A^+_n$ replaced by $A_n$.

To further bound the right-hand side of \eqref{Pi22.diff}, we define
$$
	D_n = \sup_{-\infty < x_1\leq x_2 < \infty \atop u_1, u_2\in I_U: u_1\leq u_2} \bigg| n^{-1}\sn I ( X_i \in [x_1,x_2], U_i \in [u_1,u_2] ) - P(x_1\leq X \leq x_2) P(u_1\leq U \leq u_2) \bigg| . 
$$
Recall that $X$ and $U$ are independent. Applying Inequality~1, Inequality 2 with $s=4$ and Example~1 with $d=2$ in Chapter 26 of Shorack and Wellner~(1986) to $D_n$, we obtain that, for every $t>0$ and $n\geq \max( 2/t^2, 3)$, 
$$
	P( D_n \geq t ) \leq 4 \,\cdot 3 (2n)^{4} \exp(-nt^2/8)/(2\cdot 4!) = 4 n^4 \exp(-nt^2/8).
$$
Taking $t=C n^{-1/2} \sqrt{\log n}$ with $C>4\sqrt{2}$, it follows that $D_n=O_P( n^{-1/2} \sqrt{\log n} )$, which, together with \eqref{set.An} yields
\begin{equation}
	   n^{-1}\sn I ( |X_i-x|\leq h , U_i \in B_n  )   =  P ( |X-x|\leq h  ) P ( U_i \in B_n   ) + O_P\{   n^{-1/2} (\log n)^{1/2} \} \nn
\end{equation}
uniformly over $x\in [a,b]$. Under Condition~(B3),
$\sup_{x\in \br}P ( |X-x|\leq h  ) \leq \con \, h$ and by \eqref{set.An} and Condition~(B2), $P ( U_i \in B_n ) \leq \sum_{k=1}^q P ( |U_i-u_k| \leq b_0 n^{-\alpha}  ) \leq \con \, n^{-\alpha}$. This, together with the last display implies
\begin{equation}
	  n^{-1}\sn I( |X_i-x|\leq h , U_i \in B_n  )  =   O_P\{ h n^{-\alpha }+ n^{ -1/2} (\log n)^{1/2} \}  \label{joint.consistency}
\end{equation}
uniformly over $x\in [a,b]$. Combining \eqref{Pi22.diff}, \eqref{set.An} and \eqref{joint.consistency}, we deduce that
\be
	\max_{x\in [a,b]} \big|      \hat{\Pi}_{22}(x) - R_{2n}(x)  \big| =  O_P\{ h n^{-\alpha} +n^{ -1/2} (\log n)^{1/2} \}. \label{Pi22.approxi}
\ee
By studying the mean and the variance of $R_{2n}(x)$ separately, it can be proved, using arguments similar to those used in (7.29)--(7.34) that $\max_{x\in [a,b]} | R_{2n}(x)   |  = O_P(h^2)$. Since $\alpha>\alpha_0$, by Condition (B6) we have $n^{-\alpha}=o(h)$. Then it follows from \eqref{hatfxrho.lbd}, \eqref{Pi2.dec}, \eqref{Pi21.rate} and \eqref{Pi22.approxi} that
\be
 \max_{x\in [a,b]} \big|  \hat{\Pi}_{2}(x)   \big|  = O_P \{ \delta_{0n}(g_1) + h^2 \}  . \label{Pi2.rate}
\ee

\noindent
{\it Step 5.}
The proof of the theorem is completed by combining \eqref{Pi1.rate}, \eqref{Pi3.rate} and \eqref{Pi2.rate}.

\medskip
\noi
{\it Proof of (ii).}\\
Finally we sketch the proof for the asymptotic normality of $\hat{m}(x; \rho)$, which is based on a straightforward adaptation of the arguments we used in the proof of Theorem~4.1.

Define $\hat{I}_{i} = I\{U_i  \in \hat{\mathcal{L}}_n(\rho) \}$ and $I_i = I\{U_i  \in {\mathcal{L}}(\rho)  \}$ for $i=1,\ldots, n$. In this notation, we have $\hat{f}_{\hat{X}}(x; \rho) = n^{-1} \sn K_h(x-\hat{X}_i) \hat{I}_i$. For any $\alpha\in (\alpha_0, 2/5)$ fixed, it can be proved that
\be 
	  | \hat{f}_{\hat{X}}(x; \rho) -  \tilde{f}_X(x; \rho) | = O_P\{ h^{-1} n^{-\alpha } + h^{-1} \delta_{0n}(g_1) \} = o_P(1) , \label{hatf.consistency}
\ee
where $\tilde f_X(x; \rho)= n^{-1} \sn K_h(x-X_i) I_i$. Similarly, it can be proved that
\begin{align}
 	\Big| n^{-1} \sn K_h(x-\hat X_i)(\hat{w}_Y - w_0)(U_i) (\hat{I}_i - I_i) Y_i \Big| = O_P \{ h^{-1} n^{-\alpha}\delta_{0n}(g_2)  \}, \nn \\
 	\Big| n^{-1} \sn K_h(x-\hat X_i) \{ m(X_i)  - m(x)\} (\hat{I}_i - I_i)   \Big| = O_P( n^{-\alpha} ). \nn
\end{align}
For the noise term $n^{-1} \sn K_h(x-\hat X_i) \sigma(X_i) \hat{I}_i \, \varepsilon_i$, consider the difference 
\begin{align}
    D_1 \equiv  n^{-1} \sn  K_h(x-\hat{X}_i)\sigma(X_i) (\hat{I}_i - I_i  ) \varepsilon_i.  \nn
\end{align}
Applying a conditional version of Lemma~\ref{Bernstein.inequality} implies that, for every $t\geq 0$, 
\begin{align}
	P \left\{  | D_1 | \geq   n^{-1} ( \hat V_1 \sqrt{ t} +   \hat V_2 t  )  \bigg| \{ (X_i, U_i) \}_{i=1}^n \right\} \leq 2\exp(-c t), \nn
\end{align}
where $\hat V_1^2 = \sn \sigma^2(X_i) K^2_h(x-\hat{X}_i) (\hat{I}_i - I_i )^2  \leq \| \sigma \|_\infty^2  \| K \|_\infty^2 \, h^{-2}  \sn  (\hat{I}_i - I_i )^2$ and $\hat V_2 = \max_{1\leq i\leq n} |\sigma(X_i) K_h(x-\hat{X}_i) (\hat{I}_i - I_i )| \leq \| \sigma \|_\infty  \| K \|_\infty \, h^{-1}$. It thus follows from \eqref{Pi22.diff} and \eqref{set.An} that on the event $\mathcal{G}_n(\alpha)$, for the above $\alpha \in (\alpha_0 , 2/5)$,
\begin{align}
 \hat V_1^2 \leq  \| \sigma \|_\infty^2  \| K \|_\infty^2 \, h^{-2}  \sn I(U_i \in B_n) \equiv V_1^2 = O_P\left\{  h^{-2} n^{1-\alpha} + (n \log n)^{1/2} \right\} . \nn
\end{align}
Hence, it follows immediately that $	|D_1| =  o_P\{(nh)^{-1/2}\}$. Moreover, as in the proof of Theorem~4.1, it can be shown that
\begin{align}
 \Big|  n^{-1} \sn \{ K_h(x-\hat{X}_i) - K_h(x- X_i) \}  \sigma(X_i)  I_i \, \varepsilon_i \Big| = o_P\{(nh)^{-1/2}\}. \nn
\end{align}

Putting the above calculations together gives $|\hat m(x;\rho) - m(x) -  \hat{B}(x;\rho)| = o_P\{  h^2+ (nh)^{-1/2} \}$, where $\hat{B}(x;\rho)$ is such that
\begin{align}
	& \hat{B}(x;\rho) \tilde f_X(x;\rho) \nn \\
	& =  n^{-1}\sn K_h(x-\hat{X}_i)  (\hat{w}_Y - w_0)(U_i)  I_i \, Y_i \nn \\
	& \quad +  n^{-1}\sn K_h(x- \hat{X}_i)  \{m(X_i) - m(x)\} I_i  +  n^{-1}\sn K_h(x- {X}_i) \sigma(X_i) I_i \, \varepsilon_i . \label{tildeB.dec}
\end{align}
Using a slightly modified form of Lemma~\ref{lm.1}, we have
\begin{align}
  & n^{-1}\sn K_h(x-\hat{X}_i)  (\hat{w}_Y - w_0)(U_i)  I_i \, Y_i  \nn \\
  & = - m(x) f_X(x) E[ \psi''(U) I\{U\in \mathcal{L}(\rho)\}/\psi(U) ] \mu_{L,2} \, g_2^2/2 + o_P(g_2^{2}) + O_P(n^{-1/2}). \label{tildeB.ubd1}
\end{align}
For the second term on the right-hand side of \eqref{tildeB.dec}, an expansion similar to that in (7.50) holds, i.e.,
\begin{align}
  & n^{-1}\sn K_h(x- \hat{X}_i)  \{m(X_i) - m(x)\} I_i \nn \\
  & = n^{-1}\sn K_h(x- X_i)  \{m(X_i) - m(x)\} I_i  \nn \\
  & \quad  + x m'(x) E[ \varphi''(U) I\{U\in \mathcal{L}(\rho)\} /\varphi(U) ] \mu_{L,2} \, g_1^2/2 +o_P\{ g_1^2 + h^2+ (nh)^{-1/2} \}.\label{tildeB.ubd2}
\end{align}

Finally, combining \eqref{tildeB.dec}--\eqref{tildeB.ubd2} with \eqref{hatf.consistency} proves (4.3) via a standard argument. \qed

\section{Other technical results}

\subsection{Auxiliary lemmas}  

\noindent
Here we provide proofs for the remaining technical results of the paper.
 
\begin{lemma} \label{MRS.lemma.1}
Assume that Conditions (B1)--(B6) are fulfilled, and let $\beta_1=(1+\xi_0)/5$ for some $\xi_0\geq 0$. Then for any $\alpha\in (\alpha_0, 1/2-\beta_1/2 )$ and $\kappa_1\in ( 0 , 1/2 + 3\alpha/4  - 3\alpha_0/2- \xi_0/8)$,
\begin{align}
 & \sup_{x\in [a,b] \atop w_1, w_2 \in \mathcal{N}_{0n}} \bigg\| n^{-1} \sn  K_h\{ X_i  w_1(U_i) -x \} \mathbf{v}_i(x ; w_1) \sigma(X_i)\varepsilon_i      \nn \\
 & \qquad \qquad  \qquad \quad   - n^{-1} \sn K_h\{ X_i w_2(U_i)- x \} \mathbf{v}_i(x; w_2)\sigma(X_i) \varepsilon_i  \bigg\|_\infty =O_P (n^{- \kappa_1}  ) , \label{dif.1} 
\end{align}
where $\mathcal{N}_{0n}$ is as in (7.49) and for every function $w: \br \mapsto \br$,
$$
	\mathbf{v}_i(x; w) = \left( 1,  \{ X_i w(U_i) - x \}/h \right)\T \in \br^2.
$$

\end{lemma}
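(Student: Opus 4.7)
My approach is to reduce the pairwise supremum to a single-sided one by triangle inequality, linearize the $w$-dependence via a mean-value expansion, apply Bernstein's inequality conditionally on $\{(X_i,U_i)\}_{i=1}^n$ (using the independence and zero mean of $\varepsilon_i$), and finally pass to uniformity over $x\in[a,b]$ and over $w\in\mathcal{N}_{0n}$ by a standard covering/chaining argument. The high-probability event $\mathcal{E}_n(\alpha)$ from (7.14) will be used throughout to control $r_n:=\sup_{w\in\mathcal{N}_{0n}}\|w-w_0\|_\infty \lesssim n^{-\alpha}$ (together with the kernel-estimator rate $\delta_{0n}(g_1)$ used elsewhere in the paper).

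\textbf{Step 1: Reduction to a reference point.} By the triangle inequality it suffices to bound, uniformly over $x\in[a,b]$ and $w\in\mathcal{N}_{0n}$,
\[
\Delta_n(x;w)=n^{-1}\sum_{i=1}^n\bigl[K_h\{X_iw(U_i)-x\}\mathbf v_i(x;w)-K_h(X_i-x)\mathbf v_i(x)\bigr]\sigma(X_i)\varepsilon_i.
\]

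\textbf{Step 2: Mean-value linearization.} Writing $\mathbf v_i(x;w)=(1,h^{-1}\{X_iw(U_i)-x\})^{\mathrm T}$, I would split $\Delta_n$ into a component coming from the kernel difference and one from the argument of $\mathbf v_i$. For the first, apply the MVT
\[
K_h\{X_iw(U_i)-x\}-K_h(X_i-x)=K'_h(\eta_i)\,X_i\bigl(w-w_0\bigr)(U_i),
\]
with $\eta_i$ between $X_i-x$ and $X_iw(U_i)-x$, and analogously expand the $v_{2i}$ component by the product rule. Each resulting summand carries a factor $(w-w_0)(U_i)$ of size $O(r_n)$ and either $K_h$ or $K'_h$; crucially, since $K$ is compactly supported and $X_i r_n\ll h$ for large $n$ under the hypothesis $\alpha>\alpha_0$, the effective index set is essentially $\{i:|X_i-x|\lesssim h\}$, of cardinality $O_P(nh)$.

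\textbf{Step 3: Conditional Bernstein.} Since $\varepsilon_i\perp(X_i,U_i)$ and $E\varepsilon_i=0$, each linearized sum is, conditionally on $\{(X_i,U_i)\}$, a centred sum of independent bounded variables. A direct variance calculation gives conditional variance $\lesssim n^{-1}h^{-3}r_n^2$ (from the $h^{-2}$ in $K'_h$ and the $r_n^2$ factor, averaged over the $O(nh)$ active indices) and sub-Gaussian envelope $n^{-1}h^{-2}r_n$. Bernstein's inequality therefore yields a tail of order $\exp(-c\,t^2\,nh^3/r_n^2)$ up to a sub-exponential correction, at each fixed $(x,w)$.

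\textbf{Step 4: Uniformity in $x$ and $w$.} Cover $[a,b]$ by an $h^2$-net, cover $\mathcal{N}_{0n}$ by a sup-norm net of polynomial cardinality in $n$ (the size of this net is controlled by the smoothness underlying the construction of $\hat w_X,\hat w_Y$; Lipschitz transfer handles the off-net values). A union bound then upgrades Step 3 to a uniform bound of order $(nh^3)^{-1/2}r_n\sqrt{\log n}$, which with $h\asymp n^{-\alpha_0}$, $r_n\lesssim n^{-\alpha}$ and the relation $\beta_1=(1+\xi_0)/5$ translates (after balancing the contributions from the two expansion terms and the $\delta_{0n}(g_1)$ boundary correction below) to the claimed $O_P(n^{-\kappa_1})$ for every $\kappa_1<1/2+3\alpha/4-3\alpha_0/2-\xi_0/8$.

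\textbf{Main obstacle.} The delicate point is the region where $|X_i-x|>h$ but $|X_iw(U_i)-x|\le h$ (or vice versa), on which the MVT representation is formally valid but one of $K_h,K_h'$ may actually vanish. This creates a boundary strip of width $O(r_n|X_i|)$; I would handle it by splitting indices according to $|X_i-x|\le h/2$ vs.\ $|X_i-x|>h/2$ and, on the latter, using that both $K_h$ evaluations vanish once $n$ is large enough that $\max_i|X_i|\cdot r_n<h/2$, which is guaranteed by $\alpha>\alpha_0$ under Condition (B6). The rate $\kappa_1$ reflects the optimal trade-off between this boundary scale and the Bernstein fluctuation scale, and the constraint $\alpha<1/2-\beta_1/2$ is precisely what is needed to make the boundary term negligible relative to the stochastic term.
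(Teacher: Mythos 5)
Your proposal takes a genuinely different route from the paper. The paper does not linearize or apply Bernstein directly; instead it reduces the problem to the setting of Lemma~1 of Mammen, Rothe and Schienle~(2012) (a general stochastic-equicontinuity result for kernel sums with nonparametrically generated regressors), carefully constructs the function class $\bar{\mathcal{M}}_n$ of truncated products $r(x,u)=xw(u)$ with $w\in\mathcal{N}_{0n}$, bounds its covering number using Theorem~2.7.1 of van der Vaart and Wellner~(1996), and then removes the truncation of $X_i$ via the high-probability event $\max_i|X_i|\le\lambda\log n$. What the paper's route buys is that all the chaining machinery is packaged inside Mammen et al.'s lemma, so the proof reduces to verifying a complexity (entropy) condition with the right exponents.

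The genuine gap in your proposal is in Step~4. You write ``cover $\mathcal{N}_{0n}$ by a sup-norm net of polynomial cardinality in $n$ \dots{} A union bound then upgrades Step~3.'' But $\mathcal{N}_{0n}$ is a ball in a H\"older-type smoothness class (this is what (7.49) and the construction of $\hat w_X,\hat w_Y$ give), so its covering number is of the form $\exp\{C\,\epsilon^{-1/2}n^{\xi_0/4}(\log n)^{1/4}\}$ — super-polynomial in $n$ — and a one-shot union bound over such a net does not yield anything usable. You need chaining: the contribution of the entropy enters through an integral $\int_0^{r_n}\sqrt{\log N(\epsilon)}\,d\epsilon\asymp r_n^{3/4}n^{\xi_0/8}(\log n)^{1/8}$, and combining this with your Bernstein variance $(nh^3)^{-1}$ gives the exponent $1/2-3\alpha_0/2+\tfrac34\alpha-\tfrac18\xi_0$, which is exactly the paper's $\kappa_1$-threshold. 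Your heuristic rate $(nh^3)^{-1/2}r_n\sqrt{\log n}$, i.e.\ exponent $1/2-3\alpha_0/2+\alpha$, is strictly larger than that and cannot be correct; the $\tfrac34\alpha$ (rather than $\alpha$) and the $-\tfrac18\xi_0$ are precisely the cost of the non-trivial metric entropy of $\mathcal{N}_{0n}$, and your argument has no mechanism that produces them. The phrase ``translates (after balancing \dots) to the claimed $O_P(n^{-\kappa_1})$'' papers over the hardest part of the proof. To repair the argument along your lines you would either have to carry out the chaining explicitly (reconstructing a special case of Mammen et al.'s Lemma~1) or, as the paper does, cite that lemma directly after verifying its covering-number hypothesis for the class of functions $r(x,u)=xw(u)$, $w\in\mathcal{N}_{0n}$, restricted to $x$ in a fixed compact interval via the event $\{\max_i|X_i|\le 2c_1^{-1}\log n\}$.

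Two smaller points. First, the paper truncates the $x$-argument to $[a_1,b_1]=[a-1,b+1]$ precisely because Mammen et al.'s lemma requires $r$ to be supported on a compact set; with unbounded $X_i$ you must first work on the event $\max_i|X_i|\lesssim\log n$ (which holds with probability tending to one under (B3)) before the covering-number bound $\|r-\tilde r\|_\infty\le A_1\|w-\tilde w\|_\infty$ makes sense. Your ``main obstacle'' paragraph gestures at this but the threshold ``$\max_i|X_i|\cdot r_n<h/2$'' should be checked against $\max_i|X_i|=O_P(\log n)$, $r_n\asymp n^{-\alpha}$, $h\asymp n^{-\alpha_0}$, $\alpha>\alpha_0$; this does work but is worth stating. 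Second, your triangle-inequality reduction to the reference $w_0$ is fine and is implicitly present in the paper's appeal to the lemma; that part of your plan is sound.
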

 
\begin{proof}[Proof of Lemma \ref{MRS.lemma.1}] 

We prove \eqref{dif.1} in two steps.\\[.3cm]
\noindent
{\it Step 1.} In the first step, we use Lemma~1 of Mammen et al.~(2012). There, they observe data $S_1,\ldots,S_n$, and are interested in a function $r_0$ which is supported on a compact interval $I_R$. Moreover, an estimator $\hat r$ of $r_0$ is available, and they assume that there exists a set $\mathcal{M}_n$ which is such that $\lim_{n\to\infty}P(\hat r\in \mathcal{M}_n)=1$ and that for every $\epsilon \in ( 0 , n^{-\delta} ]$, the set 
\begin{equation}
\bar{\mathcal{M}}_n=\mathcal{M}_n \cap \left\{  r: \|r-r_n\|_\infty\leq n^{-\delta} \right\}	\label{eqA2}
\end{equation}
can be covered by at most $C \exp(\epsilon^{-\gamma }n^\xi)$ balls with radius $\epsilon$, where $\delta, \gamma, \xi >0 $ and $r_n$ is such that $\|r_n-r_0\|_\infty=o(n^{-\delta})$.  Under this assumption, which they refer to as the complexity assumption, the moment condition that $ E\{\exp(c|\varepsilon|)| S \} \leq C $ almost surely for some constants $C,c >0$, and when the bandwidth $h$ satisfies 
\begin{equation}
h = h_n \asymp n^{-\eta}				\label{eqA3}
\end{equation}
for some $\eta \in (0, \delta )$, their Lemma~1 states that, for any $\kappa_1 \in \left( 0, \frac{1}{2} -\frac{3}{2}\eta   +(1-\frac{1}{2}\gamma ) \delta - \frac{1}{2} \xi  \right)$,
\begin{align}
 & \sup_{x\in I_R \atop r_1, r_2 \in \bar{\mathcal{M}}_n} \bigg\| n^{-1} \sn  K_h\{ r_1(S_i) -x \} \mathbf{w}_i(x ; w_1)  \varepsilon_i      \nn \\
 & \qquad \qquad  \qquad \quad   - n^{-1} \sn K_h\{ r_2(S_i)- x \} \mathbf{w}_i(x; w_2) \varepsilon_i  \bigg\|_\infty =O_P  (n^{- \kappa_1} ) ,  \label{eqA4}
\end{align}
where $	\mathbf{w}_i(x; r) = \left( 1,  \{ r(S_i) - x \}/h \right)\T$. 

To apply their lemma to our problem, we need to verify that the above assumptions hold in our context, where  $S_i=(X_i, U_i)^T$, for $i=1,\ldots,n$, are i.i.d. and for an appropriate form of the function $r$. We shall show below that a suitable choice of $r$ is
\be 
 r(s)=r(x,u)=  \begin{cases}
   x w(u)  \quad & \mbox{ if }  a_1\leq x\leq b_1 ,   \\
   b_1w(u)   \quad &      \mbox{ if }  x > b_1 ,   \\
   a_1 w(u) \quad &  \mbox{ if }  x< a_1,  
   \end{cases}       \label{eqA5}
\ee
where $w$ is a function from $[0,1]$ to $\br$ and $a_1=a-1, b_1 =b+1$, with $a$ and $b$ as in the statement of Theorem~4.1.

Next we verify that  the assumptions used by Mammen et al.~(2012) are satisfied in our context. First, condition \eqref{eqA3} holds in our case with $\eta = \alpha_0$. Second, to verify the complexity assumption, we need to carefully construct a set $\bar{\mathcal{M}}_n$ appropriate for our setting. Indeed, if we simply define the set $\bar{\mathcal{M}}_n$ as in Mammen et al.'s~(2012) discussion of their result for kernel smoothers with $S_i\in\br^2$, then we will obtain a suboptimal bound, because in our case, although the function $r$ is bivariate, it has the particular structure \eqref{eqA5}, where only the univariate function $w$ is unknown. Therefore, we need to define the set $\bar{\mathcal{M}}_n$ carefully enough to obtain sharp rates of convergence that correspond to the univariate case in Mammen et al.~(2012). We take
\be
   \bar{\mathcal{M}}_n \equiv    \Big\{ r :  \br \times [0,1] \mapsto \br \mbox{ is of the form \eqref{eqA5}} \Big|  w \in \mathcal{N}_{0n} \Big\}  , \label{eqA6}
\ee
where $\mathcal{N}_{0n}$ is defined in (7.49).

Next, we verify the complexity assumption of Mammen et al.~(2012) provided three paragraphs above this one. For $w_0$ as in (7.2), let $r_0$ be the function from $\br \times [0,1]$ to $ \br$ given by \eqref{eqA5} with $w$ replaced by $w_0$. By \eqref{eqA5}, for every $r, \tilde{r} \in \mathcal{M}_n$, there exist $w, \tilde{w} \in \mathcal{N}_{0n}$ such that
\begin{align}
 \| r - \tilde{r} \|_\infty = \max_{(x,u)\in \br \times [0,1]} |r(x,u) - \tilde{r}(x,u) | \leq  A_1 \cdot \| w - \tilde{w} \|_\infty , \label{eqA7}
\end{align}
where $A_1 \equiv \max(1, |a_1|, |b_1|) \geq 1$.

Recalling the definition of $\mathcal{N}_{0n}$ in (7.49), it follows from Theorem~2.7.1 in van der Vaart and Wellner~(1996) that, for every $ \epsilon \in ( 0, n^{-\alpha}]$ with $\alpha\in (\alpha_0,  1/2 - \beta_1 /2 )$, the set $\mathcal{N}_{0n}$ can be covered by at most $\exp\{C\, \epsilon^{-1/2} n^{\xi_0/4} (\log n)^{1/4}\}$ balls with $\| \cdot \|_\infty$-radius $\epsilon$, where $C >0$ is an absolute constant. This and \eqref{eqA7} jointly imply that, for every $\epsilon \in ( 0,  n^{-\alpha}]$, the set $\bar{\mathcal{M}}_n$ of functions $\br \times [0,1] \mapsto \br$ can be covered by at most $\exp\{C_1 \, \epsilon^{-1/2} n^{\xi_0/4} (\log n)^{1/4}\}$ balls with $\| \cdot \|_\infty$-radius $\epsilon $, where $C_1 = C A_1^{1/2}$. Therefore, our space  satisfies the complexity assumption with $\delta=\alpha$, $\gamma =\frac{1}{2}$, and $\xi= \frac{1}{4}\xi_0$. Further, by Condition (B6) and the boundedness assumption on $\sigma$ in (B5), all the conditions of Lemma~1 in Mammen et al.~(2012) are satisfied, so that \eqref{eqA4} holds in our case too. Specifically,  for any $\kappa_1 \in \left( 0, \frac{1}{2} - \frac{3}{2}\alpha_0 + \frac{3}{4} \alpha - \frac{1}{8}\xi_0 \right)$, we have
\begin{align}
 &\max_{\tilde{K}\in \{ K, \bar{K} \} } \sup_{x\in [a,b] \atop  r_1, r_2 \in \bar{\mathcal{M}}_n }  \bigg\|  n^{-1} \sn  \tilde{K}_h\{ r_1(X_i,U_i) -x \}  \sigma(X_i)\varepsilon_i      \nn \\
 & \qquad \qquad \qquad \qquad  \qquad \qquad   - n^{-1} \sn \tilde{K}_h\{ r_2(X_i,U_i) - x \}  \sigma(X_i) \varepsilon_i  \bigg\|_\infty =O_P (n^{- \kappa_1} ), \label{eqA8} 
\end{align}
for $\bar{K}(t)\equiv  tK(t)$.    \\[.1cm]

\noindent
{\it Step 2.}
Denote by $\hat{\Delta}_{0n}$ and $\hat{\Delta}_{1n}$ the left-hand side of \eqref{dif.1} and \eqref{eqA8}, respectively. In what follows, we shall prove that  the sequence of events $\mathcal{A}_n=\mathcal{E}_{1n}(\lambda)$, with $\mathcal{E}_{1n}(\lambda)$ as in (7.15) and $\lambda=2c_1^{-1}$ for $c_1 >0$ as in Condition (B3), satisfies (i) $P(\mathcal{A}_n) \to 1$ as $n\rightarrow \infty$, and (ii) $\hat{\Delta}_{0n} \leq \hat{\Delta}_{1n}$ on $\mathcal{A}_n$ for all sufficiently large $n$. Conclusion \eqref{dif.1} follows immediately.

To prove (i), note that under Condition (B3), we have
$$
	P ( \mathcal{A}_n^{{\rm c}}  )  = P\left(  \max_{1\leq i\leq n}|X_i|> \lambda \log n \right)  \leq \sn P ( |X_i|>\lambda \log n  ) \leq  K_1 \, n^{1-c_1\lambda},
$$ 
where $K_1=E \{ \exp(c_1|X|) \} $. For $\lambda=2c_1^{-1}$, it follows that $P (\mathcal{A}_n^{{\rm c}} ) \to 0$ as $n\to\infty$. 

To prove (ii), for $x\in [a,b]$, $w_1, w_2 \in \mathcal{N}_{0n}$ and $\tilde{K}= K$ or $\bar{K}$, consider the difference
\begin{align}
 & n^{-1} \sn  \left[ \tilde{K}_h\{ X_i w_1(U_i) -x \} - \tilde{K}_h\{ X_i w_2(U_i)- x \} \right] \sigma(X_i) \varepsilon_i  \nn  \\ 
  & = n^{-1} \sn  \left[ \tilde{K}_h\{ X_i  w_1(U_i) -x \} - \tilde{K}_h\{ X_i w_2(U_i)- x \} \right]  \sigma(X_i) \varepsilon_i \nn \\
  & \qquad \qquad \qquad \qquad \qquad \times  I \{ |X_iw_1(U_i)-x| \leq h \, \mbox{ or } \,  |X_iw_2(U_i)-x| \leq h  \}\,. \label{eqA9}
\end{align}
For every $w\in \mathcal{N}_{0n}$, we have
\begin{align}
 & 	|X_i - x | = |X_i w_0(U_i) -x | \nn \\
 & \leq |X_iw_0(U_i ) - X_i w(U_i) | + |X_i w(U_i) - x| \leq |X_i | n^{-\alpha} +  |X_i w(U_i) - x|\,,  \nn
\end{align}
which implies that on the event $\mathcal{A}_n \cap \{|X_i w(U_i)-x| \leq h \} $ with $n$ sufficiently large, $|X_i-x| \leq \lambda n^{-\alpha} \log n + h \leq 1$ and hence
\be
	  \mathcal{A}_n\cap \left\{ |X_i w(U_i)-x| \leq h \right\}  \subseteq  \left\{  X_i \in [a_1, b_1] \right\} \label{eqA10}
\ee
uniformly over $w \in \mathcal{N}_{0n}$. Together, \eqref{eqA9} and \eqref{eqA10} prove (ii) and thus complete the proof of Lemma~\ref{MRS.lemma.1}.
\end{proof}

The following lemma is Proposition~5.16 in Vershynin~(2012) with a slight modification which gives a Bernstein-type inequality for sums of sub-exponential random variables.

\begin{lemma} \label{Bernstein.inequality}
Let $\varepsilon_1, \ldots, \varepsilon_n$ be i.i.d. centered sub-exponential random variables satisfying $E\{ \exp(c_1|\varepsilon_i|) \}<C_1$ for some constants $c_1, C_1>0$. Then for every $\mathbf{a}=(a_1,\ldots, a_n)\T \in \br^n$ and every $y\geq 0$,
\begin{align*}
P\left\{ \left| \sn a_i \varepsilon_i \right| \geq \max\left(  \| \mathbf{a} \|_2 \sqrt{y} ,   \| \mathbf{a} \|_\infty y \right) \right\} \leq 2\exp(-c y),
\end{align*}
where $ c>0$ is a constant only depending on $c_1$ and $C_1$.
\end{lemma}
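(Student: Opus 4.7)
The proof follows the classical Chernoff--Bernstein scheme for sub-exponential tails, which is essentially Proposition~5.16 of Vershynin~(2012). My plan has three steps: first, obtain a Gaussian-type bound on the moment generating function of each $\varepsilon_i$ on a neighbourhood of zero; second, convert this, via independence and Markov's inequality, into an exponential moment bound for $\sum_i a_i \varepsilon_i$; third, optimize the free parameter in two regimes that correspond to the two branches of the stated maximum.

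For the first step, from $E\{\exp(c_1|\varepsilon|)\} \leq C_1$ and the series expansion of the exponential, one immediately obtains $E|\varepsilon|^k \leq C_1 k!/c_1^k$ for every $k\geq 1$. Expanding $\exp(\lambda \varepsilon)$ as a power series and using the centredness $E\varepsilon=0$ to kill the linear term, I obtain
$$E\{\exp(\lambda \varepsilon)\} \leq 1 + K \lambda^2 \leq \exp(K \lambda^2)$$
for all $|\lambda| \leq c_1/2$, with $K = K(c_1,C_1)$ an explicit constant (the geometric series converges since $|\lambda|/c_1 \leq 1/2$). This is the only genuine computation in the argument.

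For the second step, independence gives
$$E\exp\bigl(\lambda \textstyle\sum_i a_i \varepsilon_i\bigr) \leq \exp\bigl(K \lambda^2 \|\mathbf{a}\|_2^2\bigr)$$
whenever $|\lambda|\, \|\mathbf{a}\|_\infty \leq c_1/2$, and Markov's inequality then yields
$$P\Bigl(\textstyle\sum_i a_i \varepsilon_i \geq t\Bigr) \leq \exp\bigl(-\lambda t + K \lambda^2 \|\mathbf{a}\|_2^2\bigr)$$
for every such $\lambda\geq 0$. I would then optimize over $\lambda \in [0, c_1/(2\|\mathbf{a}\|_\infty)]$. The unconstrained optimum $\lambda^\star = t/(2K\|\mathbf{a}\|_2^2)$ is admissible when $t/\|\mathbf{a}\|_2^2 \leq K c_1/\|\mathbf{a}\|_\infty$; plugging it in yields the ``Gaussian'' bound $\exp(-t^2/(4K\|\mathbf{a}\|_2^2))$. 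Otherwise, the boundary choice $\lambda = c_1/(2\|\mathbf{a}\|_\infty)$ yields the ``exponential'' bound $\exp(-c' t/\|\mathbf{a}\|_\infty)$ for some $c' = c'(c_1,C_1)>0$. A symmetric application to $-\sum_i a_i \varepsilon_i$ doubles the bound, producing the factor of $2$.

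Finally, setting $t = \max(\|\mathbf{a}\|_2\sqrt{y}, \|\mathbf{a}\|_\infty y)$, I would verify that the transition between the two branches of this maximum, which occurs at $y = (\|\mathbf{a}\|_2/\|\mathbf{a}\|_\infty)^2$, matches the admissibility threshold for $\lambda^\star$ up to constants, so that both cases of the optimization deliver the same factor $\exp(-cy)$ for an appropriate $c = c(c_1,C_1) > 0$. This reconciliation of the two regimes is the only piece of bookkeeping that requires care; no step is genuinely difficult, which is why the result is typically quoted from Vershynin~(2012) rather than proved in full.
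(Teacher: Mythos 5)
Your proposal is correct, but it takes a different route from the paper: you re-derive Vershynin's Proposition~5.16 from scratch (sub-exponential moment bounds $\Rightarrow$ Gaussian-type MGF bound near zero $\Rightarrow$ Chernoff optimization in two regimes), whereas the paper simply cites Proposition~5.16 as a black box and observes that the substitution $t = K\max\{\|\mathbf{a}\|_2\sqrt{y}, \|\mathbf{a}\|_\infty y\}$ makes $\min\{t^2/(K\|\mathbf{a}\|_2)^2, t/(K\|\mathbf{a}\|_\infty)\}\geq y$, which is the only point at which the paper departs from the cited result. Your approach buys self-containedness; the paper's buys brevity. The one place in your sketch that deserves a touch more care is the final ``reconciliation'': the boundary between the two branches of the max occurs at $y = (\|\mathbf{a}\|_2/\|\mathbf{a}\|_\infty)^2$, while the admissibility threshold for $\lambda^\star$ is at $t = Kc_1\|\mathbf{a}\|_2^2/\|\mathbf{a}\|_\infty$, and these coincide only up to a constant depending on $Kc_1$. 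The clean way to finish — which is exactly the calculation the paper performs implicitly — is to note that for every $\lambda\in[0,c_1/(2\|\mathbf{a}\|_\infty)]$ the Chernoff exponent is uniformly bounded by $-c''\min(t^2/\|\mathbf{a}\|_2^2, t/\|\mathbf{a}\|_\infty)$ with $c''=\min\{1/(4K), c_1/4\}$, and that the chosen $t$ makes this minimum at least $y$: indeed $t^2/\|\mathbf{a}\|_2^2 \geq y$ from the first branch and $t/\|\mathbf{a}\|_\infty\geq y$ from the second, so no case analysis on which branch is active is actually needed. With that observation in place, your argument closes.
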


\begin{proof}[Proof of Lemma~\ref{Bernstein.inequality}] The conclusion follows directly by taking $t=K \max\{\|  \mathbf{a} \|_2 \sqrt{y} , \|  \mathbf{a} \|_\infty y\}$ in Proposition~5.16 of Vershynin~(2012) so that $\min\{ t^2/ (K \|  \mathbf{a} \|_2)^2  ,t/(K \|  \mathbf{a} \|_\infty ) \} \geq  y$.
\end{proof}

\begin{lemma} \label{lm.0}
{\rm
Let $w_0$ and $\hat{w}_X$ be as in (7.2). If Conditions (2.2) and (B1)--(B6) are satisfied, then for every $x\in [a, b] \subseteq I_X\equiv  \{z: f_X(z)>0\}$,
\begin{align}
	&  n^{-1} \sn K_h(X_i-x)(w_0 - \hat{w}_X)(U_i)\, X_i \nn \\
	& \qquad \qquad \qquad \qquad =  \tfrac{1}{2}x f_X(x)  E \{ \varphi''(U)/\varphi(U) \}   \mu_{L,2}   \,  g_1^2  + o_P ( g_1^2 ) + O_P ( n^{-1/2}  ) \label{stochastic.order.1}
\end{align}
and
\begin{align}
 & (nh)^{-1} \sn K'_h(x-X_i) (w_0 - \hat{w}_X )(U_i)\, X_i \{ m(X_i) -m(x) \} \nn \\
&  \qquad \qquad \qquad  = \tfrac{1}{2}x m'(x) f_X(x)  E \{ \varphi''(U)/\varphi(U) \}   \mu_{L,2}   \,  g_1^2 +  o_P ( g_1^2  )  + O_P( n^{-1/2}  ).   \label{stochastic.order.1'}
\end{align}
}
\end{lemma}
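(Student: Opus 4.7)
The plan is to linearise $w_0-\hat w_X$ around its target, substitute into the sum, and combine kernel concentration with the independence of $X$ and $U$ from Condition~(B2). From \eqref{thm3.5} with the ``$+$'' removed, $\hat w_X(u)=\hat\mu_0\,\varphi(u)/\hat\varphi_{0,\LL}(u)$. Writing $a(u)=\hat\varphi_{0,\LL}(u)-\varphi_0(u)$ and $b=\hat\mu_0-\mu_0$, and using $\varphi_0=\mu_0\varphi$, a first-order Taylor expansion in $(a,b)$, uniformly valid on the high-probability event $\{\inf_u |\hat\varphi_{0,\LL}(u)|$ bounded away from $0\}$, gives
\[
	(w_0-\hat w_X)(u)=\frac{a(u)}{\varphi_0(u)}-\frac{b}{\mu_0}+R_n(u),\qquad \sup_u|R_n(u)|=o_P(g_1^2+n^{-1/2}).
\]

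For \eqref{stochastic.order.1}, substitute this into the sum. The contribution of $b/\mu_0$ equals $-(\hat\mu_0-\mu_0)\mu_0^{-1}\cdot n^{-1}\sum_iK_h(X_i-x)X_i=O_P(n^{-1/2})$, since $\hat\mu_0$ is a sample mean and $n^{-1}\sum_iK_h(X_i-x)X_i\to xf_X(x)$. For the $a/\varphi_0$ piece, split $a(u)=\tfrac12\varphi_0''(u)\mu_{L,2}g_1^2+o(g_1^2)+\xi_n(u)$, with $\xi_n(u)$ the centred stochastic error of the local linear estimator. The deterministic-bias part equals
\[
	\tfrac12\mu_{L,2}\,g_1^2\cdot n^{-1}\sum_{i=1}^n K_h(X_i-x)\,X_i\,\frac{\varphi_0''(U_i)}{\varphi_0(U_i)}+o_P(g_1^2);
\]
since $\varphi_0''/\varphi_0=\varphi''/\varphi$ and $X\perp U$, the law of large numbers combined with the standard kernel concentration used in deriving~\eqref{hatfxrho.lbd} identifies the limit as $\tfrac12 xf_X(x)\,E\{\varphi''(U)/\varphi(U)\}\mu_{L,2}\,g_1^2$. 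For the $\xi_n$ part, substitute the asymptotic representation of $\xi_n(u)$ as a kernel-weighted average (with bandwidth $g_1$) of centred residuals and interchange the order of summation; the resulting centred double sum has variance of order $n^{-1}$ and therefore contributes $O_P(n^{-1/2})$. Bounding this double sum is the main technical obstacle; the remaining pieces are routine kernel calculus.

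For \eqref{stochastic.order.1'}, I would Taylor-expand $m(X_i)-m(x)=m'(x)(X_i-x)+O\{(X_i-x)^2\}$ under Condition~(B3) and use the pointwise identity
\[
	(X_i-x)\,K'_h(x-X_i)=h\,\bar K_h(x-X_i),\qquad \bar K(u)=-uK'(u),
\]
for which $\int\bar K=1$ by integration by parts. The leading part becomes $m'(x)\cdot n^{-1}\sum_i\bar K_h(x-X_i)\,X_i(w_0-\hat w_X)(U_i)$, which has exactly the form of the left side of \eqref{stochastic.order.1} with $K$ replaced by $\bar K$; the preceding argument then delivers $\tfrac12 xm'(x)f_X(x)\,E\{\varphi''(U)/\varphi(U)\}\mu_{L,2}\,g_1^2$. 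The quadratic Taylor remainder carries an extra factor of $(X_i-x)$ under $K'_h$, which is of order $h$ relative to the leading term and is therefore absorbed into the $o_P(g_1^2)$ error.
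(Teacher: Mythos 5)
Your overall strategy is the same as the paper's: linearise $w_0-\hat w_X$, substitute the local-linear stochastic expansion $\hat\varphi_{0,\LL}^+-\varphi_0^+=\tfrac12\mu_0^+\varphi''\mu_{L,2}g_1^2+\Gamma_2+o_P(g_1^2)$, extract the deterministic bias by a law of large numbers, and treat the stochastic part as a degenerate double sum. The reduction of \eqref{stochastic.order.1'} to \eqref{stochastic.order.1} via the identity $(X_i-x)K'_h(x-X_i)=h\bar K_h(x-X_i)$ with $\bar K(u)=-uK'(u)$ and $\int\bar K=1$ is a genuine and slick simplification over the paper, which instead recomputes $E[K'_h(x-X)X\{m(X)-m(x)\}]$ directly; your route is cleaner and buys you the result for free once \eqref{stochastic.order.1} is known for kernels satisfying $\int\tilde K=1$, provided you also note that the quadratic Taylor remainder contributes $O_P\{h\,\delta_{0n}(g_1)\}=o_P(g_1^2)$.

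The genuine gap is the treatment of the stochastic piece, which you flag as ``the main technical obstacle'' but then dispose of with ``the resulting centred double sum has variance of order $n^{-1}$''. That claim is not true as stated: the double sum $n^{-2}\sum_{i\neq j}T_{ij}(x)$, with $T_{ij}(x)$ involving the concentrating kernel $L_{g_1}(U_j-U_i)$, is a non-degenerate $U$-statistic whose variance is $O\{n^{-1}+(g_1h)^{-1}n^{-2}\}$. You need a Hoeffding-type decomposition into the projection $\xi_{j,x}=E\{T_{ij}(x)\mid X_j,U_j\}$ and the doubly-centred remainder $\tilde T_{ij}$: the projection contributes the $O(n^{-1})$ part, while $E\tilde T_{12}^2\asymp(g_1h)^{-1}$ gives the second term, which is $o(g_1^4)$ only because Condition~(B6) forces $ng_1h\to\infty$. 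Without this explicit decomposition and the accompanying computation of $E\{Q_n(U)\}^2$ and $E T_{12}^2$ (which occupy roughly half of the paper's proof, equations (E.38)--(E.51)), the stated remainder $O_P(n^{-1/2})+o_P(g_1^2)$ is asserted rather than proved. A secondary, minor gap: you need the diagonal term $i=j$ of the double sum, which is of order $O_P\{(ng_1)^{-1}\}$, to be $o_P(g_1^2)$; this again uses (B6), and should be stated.
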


\begin{proof}[Proof of Lemma~\ref{lm.0}] For the sake of clarity we provide the detailed proof only for \eqref{stochastic.order.1}, however, in {\it Step 3} below we shall mention changes that have to be made to prove \eqref{stochastic.order.1'}.

\medskip
\noindent
{\it Step 1. Preliminaries}. First, recall that $\varphi_0^+= \mu_0^+ \, \varphi$, where $\mu^+_0=E|X|$. A direct consequence of (7.2), (7.6) and the fact $\hat \mu_0^+ =\mu_0^+ + O_P(n^{-1/2})$ is that
$$
\hat w_X(u)
=  \tilde w_X(u)+O_P(n^{-1/2})
$$
uniformly over $u\in [0,1]$, where $\tilde{w}_X(u)\equiv  \{ \hat \varphi^+_{0,\LL}(u) \}^{-1} \varphi^+_0(u) $ satisfies
\begin{align}
	w_0(u)- \tilde w_X(u)   = \frac{\hat \varphi^+_{0,\LL}(u)- \varphi_0^+(u)}{\varphi_0^+(u)} +O_P \{ \delta^2_{0n}(g_1) \}   \label{lm.0.1}
\end{align}
for $\delta_{0n}$ as in (7.7).
Recall that $\hat{\varphi}_{0,\LL}^+(u)$ defined below (3.3) is the local linear estimator of $\varphi_0^+(u) =E( |\tilde{X}| \, |U=u) $. We rewrite $\hat{\varphi}^+_{0,\LL}$~as
\begin{align*}
	 \hat \varphi^+_{0,\LL}(u)  =  \mathbf{e}_1\T \mathbf S^{-1}_n(u) \,  n^{-1} \sn  L_{g_1}(U_i-u)  \mathbf{v}_i(u)  | \tilde X_i | ,
\end{align*}
where $\mathbf{v}_i(u) = (1, g_1^{-1}(U_i-u))\T \in \br^2$ and $\mathbf S_n(u)  = n^{-1} \sn  L_{g_1}(U_i-u) \mathbf{v}_i(u) \mathbf{v}_i(u)\T \in \br^{2\times 2}$. Further, define $\eta_i=|\tilde{X}_i| -E( | \tilde{X}_i| \, | U_i)=\varphi(U_i)( |X_i| -\mu^+_0)$, such that, using (2.1) and (2.4), $E(\eta_i| U_i)=0$ and
$$
	| \tilde{X}_i| = \varphi_0^+(U_i) + \eta_i.
$$

Similarly to (A.1) and (A.2) in Delaigle et al.~(2009), $\hat{\varphi}_{0,\LL}^+(u)-\varphi_0^+(u)$ can be written as $ \hat{\Gamma}_1(u)+ \hat{\Gamma}_2(u)$, where
\begin{align}
	& \hat{\Gamma}_1(u) \equiv   \mathbf{e}_1\T \mathbf S^{-1}_n(u) \,  n^{-1} \sn  L_{g_1}(U_i-u)  \mathbf{v}_i(u) \left\{ \varphi_0^+(U_i) - \varphi^+_0(u) - (\varphi_0^+)'(u)(U_i-u) \right\} \label{LL.bias}
\end{align}
and
\begin{align}
	&  \hat{\Gamma}_2(u) \equiv  \mathbf{e}_1\T \mathbf S^{-1}_n(u) \,  n^{-1} \sn  L_{g_1}(U_i-u)  \mathbf{v}_i(u) \eta_i . \label{LL.noise}
\end{align}
For $\mathbf{S}_n(u)$ in \eqref{LL.bias} and \eqref{LL.noise}, let
\begin{align}
\mathbf{S}_{n}^*(u)  & = E \{ \mathbf{S}_n(u) \} = 
\left(
\begin{array}{ll}
  \int   L(v) f_U(u+ g_1 v) \, dv      &   \int  v L(v) f_U(u+ g_1 v) \, dv \\
  \int  v L(v) f_U(u+ g_1 v) \, dv &  \int  v^2 L(v) f_U(u+ g_1 v) \, dv
\end{array}
\right).  \label{S*n.definition}
\end{align}
Standard arguments show that $\mathbf{S}_{n}^*(u) = \mathbf{S}^*(u) + O(g_1)$ uniformly in $u\in [0,1]$, where $\mathbf{S}^*(u)  = f_U(u) \, \mbox{diag}\,(\mu_{L,0},\mu_{L,2})$ if $u\in (g_1,1-g_1)$ and for $0\leq \rho \leq 1$, $\mathbf{S}^*(u) = f_U(0^+) \, \mathbf{M}^+_{L}(\rho)$ if $u=\rho g_1$ and $\mathbf{S}^*(u) = f_U(1^-) \, \mathbf{M}^-_L(\rho)$ if $u=1-\rho g_1$, where $ f_U(0^+) \equiv  \lim_{u>0,u\to 0} f_U(u)$, $f_U(1^-)\equiv \lim_{u<1, u\to 1}f_U(u)$,
\beq 
 \mathbf{M}^+_L(\rho) \equiv    \left(
\begin{array}{ll}
     \mu^+_{L,0}(\rho )      &  \mu^+_{L,1}(\rho ) \\
  \mu^+_{L,1}(\rho ) &  \mu^+_{L,2}(\rho ) 
\end{array}
\right)   \ \ \mbox{ and }  \   \
 \mathbf{M}^-_L(\rho) \equiv   
  \left(
\begin{array}{ll}
     \mu^-_{L,0}(\rho )      &   \mu^-_{L,1}(\rho ) \\
  \mu^-_{L,1}(\rho ) &  \mu^-_{L,2}(\rho ) 
\end{array} \right)
\eeq
with $\mu^+_{L,\ell}(\rho)=  \int_{-\rho}^1 v^{\ell}L(v)\, dv$ and $ \mu^-_{L,\ell}(\rho) =\int_{-1}^\rho v^{\ell}L(v)\, dv$. By the symmetry of $L$, $\mu^+_{L,\ell}(\rho)=\mu^-_{L,\ell}(\rho)$ if $\ell$ is even, and $\mu^+_{L,\ell}(\rho)=-\mu^+_{L,\ell}(\rho)$ is $\ell$ is odd. In particular, $\mu^+_{L,\ell}(0)=\mu^-_{L,\ell}(0)=1/2$ and
\be
	\mathbf{M}^+_L(1)=\mathbf{M}^-_L(1)=\mbox{diag}\,(\mu_{L,0},\mu_{L,2}) \equiv \mathbf{M}_L.   \label{M.definition}
\ee

Under Condition (B5), using similar techniques as in the paragraph after \eqref{Tn5.uniform.rate} it can be proved that, as $n\to\infty$, 
\be 
\sup_{u\in [0,1]} \left\| \mathbf S^{-1}_n(u) - \{ \mathbf{S}_{n}^*(u) \}^{-1}  \right\| = O_P \{  (ng_1 )^{-1/2} (\log n)^{1/2} \}. \label{Sn.consistency}
\ee
Here, the additional logarithmic term comes from the lattice argument that can be used to deal with the maximization over $u\in [0,1]$. See, for example, (7.30)--(7.33).

Next we study $\hat{\Gamma}_1(u)$. For $i=1, \ldots ,n$, put 
\begin{align}
	\mathbf{G}_i(u) & = (G_{1,i}(u), G_{2,i}(u))\T \nn \\
	&  =  L_{g_1}(U_i-u)  \left\{  \varphi_0^+(U_i) - \varphi^+_0(u) -(\varphi_0^+)'(u)(U_i-u) \right\} \mathbf{v}_i(u).  \label{G.definition}
\end{align}
Under Condition (B2), the inequality $|\varphi^+_0(U_i)-\varphi^+_0(u) - (\varphi_0^+)'(u)(U_i-u) | I(|U_i-u|\leq g_1) \leq \frac{1}{2} \mu_0^+ \| \varphi'' \|_\infty \, g_1^2 I(|U_i-u|\leq g_1)$ holds almost surely. Then the argument leading to (7.20) can be used to prove that
\beq
  \max_{u\in [0,1]} \left\| n^{-1} \sn \mathbf{G}_i(u) \right\|_\infty = O_P( g_1^2 ).
\eeq
Together with \eqref{Sn.consistency}, this implies
\begin{align}
  \hat{\Gamma}_1(u)     =  \mathbf{e}_1\T \{ \mathbf S^*_n(u) \}^{-1} \,  n^{-1} \sn \mathbf{G}_i(u) + O_P \{ g_1^2  (ng/\log n)^{-1/2}  \}  = {\Gamma}_1(u) + o_P( g_1^2 )  \label{Gamma1.equiv}
\end{align}
uniformly over $u\in [0,1]$, where ${\Gamma}_1(u) \equiv  \mathbf{e}_1\T  \{ \mathbf S^*_n(u) \}^{-1} \,  n^{-1} \sn \mathbf{G}_i(u)$.

For $\hat{\Gamma}_2(u)$, first applying Theorem~2 in Hansen~(2008) to each of the two components of $n^{-1} \sn  L_{g_1}(U_i-u)  \mathbf{v}_i(u) \eta_i $ yields
\be
	\max_{u\in [0,1]} \left\|  n^{-1} \sn  L_{g_1}(U_i-u)  \mathbf{v}_i(u) \eta_i  \right\|_\infty  = O_P \{ (ng_1/\log n)^{-1/2}  \}. \label{LL.noise.uniform.rate}
\ee
Hence, under Condition (B6), combining \eqref{Sn.consistency} and \eqref{LL.noise.uniform.rate} leads to
\begin{align}
\hat{\Gamma}_2(u) &     =  \mathbf{e}_1\T\{  \mathbf S^*_n(u) \}^{-1} \,  n^{-1} \sn  L_{g_1}(U_i-u)  \mathbf{v}_i(u) \eta_i + O_P \{  ( ng_1 /\log n)^{-1} \}  \nn \\
  & = {\Gamma}_2(u) + o_P(g_1^2),  \label{Gamma2.equiv}
\end{align}
where ${\Gamma}_2(u) \equiv \mathbf{e}_1\T  \{ \mathbf S^*_n(u) \}^{-1} \,  n^{-1} \sn  L_{g_1}(U_i-u)  \mathbf{v}_i(u) \varphi(U_i)( | X_i| - \mu^+_0)$.

Putting \eqref{LL.bias}, \eqref{LL.noise}, \eqref{Gamma1.equiv} and \eqref{Gamma2.equiv} together, we obtain that
\begin{align}
	& \hat{\varphi}^+_{0,\LL}(u) -\varphi^+_0(u)  \nn \\
	& =  \Gamma_1(u) + \Gamma_2(u) + o_P(g_1^2) \nn \\
	& =  \mathbf{e}_1\T  \{ \mathbf S^*_n(u) \}^{-1}  \,  n^{-1} \sn E \{ \mathbf{G}_i(u) \} + \mathbf{e}_1\T \{ \mathbf S^*_n(u) \}^{-1} \,  n^{-1} \sn \left[ \mathbf{G}_i(u)- E \{ \mathbf{G}_i(u) \} \right]  \nn \\
	& \quad +   \mathbf{e}_1\T \{ \mathbf S^*_n(u) \}^{-1} \,  n^{-1} \sn  L_{g_1}(U_i-u)  \mathbf{v}_i(u) \varphi(U_i)( | X_i| - \mu^+_0)+ o_P ( g_1^2 ) \nn \\
	& \equiv  \Gamma_{11}(u) + \Gamma_{12}(u) + \Gamma_2(u) + o_P( g_1^2 )		\label{ll.dec.1}
\end{align}
uniformly over $u\in [0,1]$. In what follows, we study the first two terms on the right-hand side of \eqref{ll.dec.1}.

For $\Gamma_{11}(u)$, a standard argument based on Taylor expansion gives 
\be 
 \Gamma_{11}(u) = \mu_0^+ B_n(u) g_1^2 + o ( g_1^2 ) \label{LL.asymp.bias}
\ee
uniformly over $u\in [0,1]$, where $B_n(u)  \equiv \frac{1}{2} \varphi''(u) \mu_{L,2}$ if $u\in ({g_1},1-{g_1})$ and for $0\leq \rho\leq 1$,
\begin{align}
	B_n(u) \equiv  \frac{1}{2} \, \frac{ \{ \mu^+_{L,2}(\rho) \}^2-\mu^+_{L,1}(\rho)\mu^+_{L,3}(\rho)}{\mu^+_{L,2}(\rho)\mu^+_{L,0}(\rho)- \{\mu^+_{L,1}(\rho)\}^2} \begin{cases}
	 \varphi''(0^+) \quad & \mbox{ if } u = \rho {g_1}, \\
	 \varphi''(1^-) \quad & \mbox{ if } u = 1-\rho {g_1}.
	\end{cases}  \label{Bn.definition}
\end{align}

For $\Gamma_{12}(u)$, the argument that we employed before to prove (7.33) can be used to deal separately with the two components of $n^{-1} \sn  [ \mathbf{G}_i(u)- E \{ \mathbf{G}_i(u) \}  ] $, which are centered versions of
$$ 
  n^{-1} \sn L_{g_1}(U_i-u) \{\varphi^+_0(U_i)- \varphi^+_0(u) - (\varphi_0^+)'(u)(U_i-u)\} 
$$ 
and 
$$
n^{-1} \sn \bar{L}_{g_1}(U_i-u) \{\varphi^+_0(U_i)- \varphi^+_0(u) - (\varphi^+_0)'(u)(U_i-u)\},
$$
where $\bar{L}(t)=tL(t)$. This leads to
\be
\max_{u\in [0,1]} |\Gamma_{12}(u)|=    O_P\big\{ g_1^{3/2} n^{-1/2}(\log n)^{1/2} + g_1 n^{-1} \log n + n^{-1} \big\} = o_P ( g_1^2 ) . \label{LL.remainder}
\ee

Combining \eqref{ll.dec.1}, \eqref{LL.asymp.bias} and \eqref{LL.remainder}, we get
\begin{align}
	\hat{\varphi}^+_{0,\LL}(u) - \varphi^+_0(u) = \mu_0^+ B_n(u) g_1^2 + \Gamma_2(u) + o_P ( g_1^2)  \label{LL.stochastic.expansion}
\end{align}
uniformly over $u\in [0,1]$, where $\Gamma_2(u)$ is as in \eqref{Gamma2.equiv}.

\medskip
\noindent 
{\it Step 2. Proof of \eqref{stochastic.order.1}}. Let $x\in [a,b]$ be fixed. Substituting \eqref{LL.stochastic.expansion} into \eqref{lm.0.1} yields
\begin{align}
	 n^{-1} &\sn K_h(X_i-x)( w_0  -\hat w_X)(U_i)\, X_i  \nn  \\ 
		& =  g_1^2 n^{-1} \sn  K_h(X_i-x) X_i \frac{ B_{n}(U_i)}{\varphi(U_i)} \nn	\\
	& \quad +   ( \mu_0^+ n )^{-1} \sn K_h(X_i-x) X_i \,  \frac{\Gamma_2(U_i)}{\varphi(U_i)}  +    o_P ( g_1^2  )  + O_P(n^{-1/2}).   \label{predictor.bias.dec}
\end{align}
In what follows, we shall show that the first term on the right-hand side of \eqref{predictor.bias.dec} is the dominating term and the second one is negligible. 

First, using techniques similar to those we used before to deal with $n^{-1}\sn K_h(x-X_i)\{m(X_i)-m(x)\}$ as in (7.29), it can be proved that
\begin{align}
 g_1^2 n^{-1} \sn  K_h(X_i-x) X_i \frac{ B_{n}(U_i)}{\varphi(U_i)} =   E \{ K_h(X -x)X \}  E \{ B_{ n}(U)/\varphi(U) \} \, g_1^2 + o_P( g_1^2 ) , \nn
\end{align}
where, by standard calculations,
\begin{align}
	E \{  K_h(X-x)X  \} = x f_X(x) \mu_{K,0} +  O(h^2) . \label{v.exp.1}
\end{align}
Moreover, by the definition of $B_n$ in and above \eqref{Bn.definition},
\begin{align}
	   E \{  B_{n}(U)/\varphi(U) \} & = \int_{0}^1 \frac{B_n(u)}{\varphi(u)}  f_U(u) \, du \nn \\
	& =  \tfrac{1}{2}\mu_{L,2}\int_{ g_1 }^{1- g_1} \frac{\varphi''(u)}{\varphi(u)}  f_U(u) \, du +  \bigg( \int_0^{g_1} + \int_{1-g_1 }^1 \bigg) \frac{B_n(u)}{\varphi(u)}  f_U(u) \, du \nn \\
	& = \tfrac{1}{2} E \{ \varphi''(U)/\varphi(U)  \}\mu_{L,2} + O(g_1). \nn
\end{align}
The last three displays together imply that
\begin{align}
   g_1^2 n^{-1} \sn  K_h(X_i-x) X_i \frac{ B_{n}(U_i)}{\varphi(U_i)}   =  \tfrac{1}{2} x f_X(x) E \{ \varphi''(U)/\varphi(U)  \}  \mu_{L,2} \, g_1^2 + o_P ( g_1^2  ) . \label{predictor.bias.1}
\end{align}

Next, we focus on the second addend on the right-hand side of~\eqref{predictor.bias.dec}. For every $s\in \br$, put $\mathbf{w}(s)=(1,s)\T \in \br^2$. Substituting the expression ${\Gamma}_2(u)  =\mathbf{e}_1\T  \{ \mathbf S^*_n(u) \}^{-1} \,  n^{-1} \sum_{j=1}^n  L_{g_1}(U_j-u) \mathbf{w}\{ g_1^{-1}(U_j-u) \} \varphi(U_j)( | X_j| - \mu^+_0)$ into the second term on the right-hand side of~\eqref{predictor.bias.dec} yields
\begin{align}
	 &  (\mu_0^+ n^2 )^{-1} \sum_{i, j=1}^n  K_h(X_i-x)L_{g_1}(U_j-U_i ) X_i( |X_j| -\mu^+_0) \frac{ \varphi(U_j)}{\varphi(U_i)}   \mathbf{e}_1\T \mathbf{S}_n^*(U_i)^{-1} \mathbf{w}\{ g_1^{-1}(U_j-U_i)  \}       \nn \\
	 & =   ( \mu_0^+ n^2 )^{-1} \sum_{i\neq j}  K_h(X_i-x)L_{g_1}(U_j-U_i ) X_i( |X_j| - \mu^+_0) \frac{ \varphi(U_j)}{\varphi(U_i)}  \, \mathbf{e}_1\T \mathbf{S}_n^*(U_i)^{-1} \mathbf{w}\{ g_1^{-1}(U_j-U_i)  \}         \nn  \\ 
	 & \quad  +  L(0) ( \mu_0^+ n^2 g_1 )^{-1} \sn  K_h(X_i-x) X_i( |X_i| - \mu^+_0)   \mathbf{e}_1\T \mathbf{S}_n^*(U_i)^{-1} \mathbf{e}_1    \nn \\
	  &  = (\mu_0^+ n^2)^{-1} \sum_{i\neq j}  T_{ij}(x)   + O_P \{ (ng_1)^{-1}  \} ,      \label{predictor.bias.2}
\end{align}
where 
\begin{align}
	  T_{ij}(x) & \equiv K_h(X_i-x)L_{g_1}(U_j-U_i ) X_i( |X_j| - \mu^+_0) \nn \\
	& \quad \times   \{\varphi(U_i)\}^{-1}  \varphi(U_j)  \, \mathbf{e}_1\T \{ \mathbf{S}_n^*(U_i) \}^{-1} \mathbf{w}\{ g_1^{-1}(U_j-U_i)  \} .  \label{def:Tij}
\end{align}
Shortly we shall prove that
\be
	 n^{-2} \sum_{i\neq j} T_{ij}(x)  = O_P  \{ n^{-1/2} + (g_1 h)^{-1/2} n^{-1} \}, \label{sumTij.var.bound}
\ee
where, under Condition (B6), the right-hand side is of order $O_P(n^{-1/2}) + o_P(g_1^2)$. 

Together, \eqref{predictor.bias.dec}, \eqref{predictor.bias.1}, \eqref{predictor.bias.2} and \eqref{sumTij.var.bound} complete the proof of \eqref{stochastic.order.1}.

To prove \eqref{sumTij.var.bound}, it suffices to bound the variance of $\sum_{i\neq j} T_{ij}(x)$ since $ET_{ij}(x)=0$ for all $i\neq j$. First, observe that for $i\neq j$, $E(T_{ij}|X_i,U_i)=0$ and  
\be
	\xi_{j,x} \equiv  E\{ T_{ij}(x)|X_j,U_j\}  = E\{K_h(X_i-x) X_i\}  ( |X_j| - \mu^+_0)  Q_n(U_j), \label{def:xi}
\ee
where for $u \in[0,1]$,
\begin{align}
	Q_n(u ) & \equiv  \varphi(u)  \int_0^1   \mathbf{e}_1\T \{ \mathbf{S}_n^*(s) \}^{-1} \mathbf{w}\{ g_1^{-1}(u-s)\} L_{g_1}( u- s )  \frac{f_U(s)}{\varphi(s)} \, ds  \nn \\
	 &= \varphi(u) \int_{(u-1)/g_1}^{u/g_1}   \mathbf{e}_1\T \{ \mathbf{S}_n^*(u-g_1 s) \}^{-1} \mathbf{w}(s) L(s)  \frac{f_U(u-g_1 s)}{\varphi(u-g_1 s)} \, ds.  \label{def:Qn}
\end{align}
Next, using the inequality $(a+b)^2\leq 2a^2+2b^2$ which holds for all $a,b \in \br$, we have
\begin{align}
  E \bigg\{ \sum_{i\neq j} T_{ij}(x) \bigg\}^2& = E\bigg(  (n-1)\sum_{j=1}^n \xi_{j,x} + \sum_{i\neq j} \big[T_{ij}(x) -  E\{T_{ij}(x)|X_j,U_j\}  \big] \bigg)^2 \nn  \\
	& \leq 2n^2 E \bigg( \sum_{j=1}^n \xi_{j,x} \bigg)^2 + 2E\bigg( \sum_{i\neq j} \big[ T_{ij}(x) -  E\{ T_{ij}(x)|X_j,U_j\} \big] \bigg)^2. \label{sumTij.var.dec}
\end{align}
Since the $\xi_{j,x}$'s in \eqref{def:xi} are independent random variables with mean zero, we have 
\begin{equation}
	 E \bigg( \sum_{j=1}^n \xi_{j,x} \bigg)^2  = \sum_{j=1}^n E \xi_{j,x}^2 =  \var(|X| ) \big[ E\{ K_h(X-x)X \} \big]^2 E \{ Q_n(U) \}^2 \, n .  \label{sumTij.var.1}
\end{equation}

To bound the right-hand side of \eqref{sumTij.var.1}, note that uniformly over $u\in [2g_1, 1-2g_1]$ and $s\in [-1,1]$,
for $n$ sufficiently large,
\begin{align}
&	f_U(u-s g_1) = f_U(u) + O(g_1), \quad   \varphi(u-sg_1 )^{-1}  =  \varphi(u)^{-1} + O(g_1) \label{MiddInt1}\\
&  \mathbf{e}_1\T \mathbf{S}_n^*(u - sg_1)^{-1} \mathbf{w}( s ) L(s)  = f_U(u)^{-1} L^*(s) + O(g_1),   \label{MiddInt2}
\end{align}
where for the last term we used \eqref{S*n.definition}.

Recalling the definition of $Q_n$ at \eqref{def:Qn} and the fact that $L$ is supported on $[-1,1]$, \eqref{MiddInt1} and \eqref{MiddInt2} imply that for $n$ large enough and uniformly over $u \in [2g_1 , 1-2g_1] \subset [0,1]$,
$
  Q_n(u) =  \int  L^*(s)\, ds +O(g_1),
$
where $L^*(s) = \mathbf{e}_1\T \mathbf{M}_L^{-1} \mathbf{w}(s)L(s)$, with $\mathbf{M}_L$ as in \eqref{M.definition}, satisfies $\int  L^*(s)\, ds=1$.  We deduce that $E \{Q_n(U) \}^2 \leq \con $ Moreover, by \eqref{v.exp.1},
\be
	E \bigg( \sum_{j=1}^n \xi_{j,x} \bigg)^2   \leq \con\,  \{ xf_X(x) \}^2 \, n. \label{sumTij.var.2}
\ee 

Next we turn to the second term on the right-hand side of \eqref{sumTij.var.dec}. Put $\tilde{T}_{ij}(x)=T_{ij}(x)-E\{ T_{ij}(x)|X_j,U_j\}$ and note that 
$	E\{ \tilde{T}_{ij}(x) | X_i, U_i \} = E\{ \tilde{T}_{ij}(x) | X_j, U_j \} =0.$
In this notation, we have
\begin{align}
	 E \bigg\{ \sum_{i\neq j} \tilde{T}_{ij}(x)  \bigg\}^2 &= \sum_{i_1\neq j_1} \sum_{i_2\neq j_2} E\{\tilde{T}_{i_1j_1}(x)\tilde{T}_{i_2j_2}(x) \} \nn \\
	& = \sum_{i\neq j}  E\{\tilde{T}_{i j }(x) \}^2 + \sum_{i\neq j}  E\{\tilde{T}_{i j }(x)\tilde{T}_{ji }(x) \} + \sum_{i \neq j_1 \neq  j_2 }  E\{\tilde{T}_{i j_1 }(x) \tilde{T}_{i j_2 }(x)\}  \nn \\
	& \quad +\sum_{i\neq i_2 \neq j_1} E\{\tilde{T}_{i j_1}(x) \tilde{T}_{i_2 i}(x) \} +  \sum_{i_1 \neq i_2 \neq j}  E\{\tilde{T}_{i_1 j  }(x) \tilde{T}_{i_2 j  }(x)\}  \nn \\
	& \quad +  \sum_{i_1 \neq j_2 \neq j}  E\{\tilde{T}_{i_1 j  }(x) \tilde{T}_{j j_2  }(x)\}  + \sum_{i_1\neq j_1\neq i_2\neq j_2} E\{\tilde{T}_{i_1j_1}(x)\tilde{T}_{i_2j_2}(x) \}  \nn \\
	& \equiv   J_1 + J_2 + J_3 + J_4 + J_5 + J_6 + J_7. \nn
\end{align}
It is easy to see that $J_7=0$. For every $i \neq j_1 \neq j_2$, we also have
\begin{align*}
  & E\{\tilde{T}_{i j_1 }(x) \tilde{T}_{i j_2 }(x)\}   \\
  & = E\big[ E\{\tilde{T}_{i j_1 }(x) \tilde{T}_{i j_2 }(x)|X_i, U_i\} \big] =  E \big[ E\{\tilde{T}_{i j_1 }(x) | X_i, U_i \} E \{ \tilde{T}_{i j_2 }(x)|X_i, U_i \}   \big] = 0 ,
\end{align*}
which implies $J_3=0$. Similarly, it can be proved that $J_4=J_5=J_6=0$. For $J_2$, we have
$$
	E\{\tilde{T}_{i j }(x)\tilde{T}_{ji }(x) \} \leq \tfrac{1}{2} E\{\tilde{T}_{i j }(x)\}^2 + \tfrac{1}{2}E\{\tilde{T}_{ji }(x)\}^2,
$$
and hence $J_2\leq J_1$. Putting the above calculations together, we get
\be
	E\bigg\{ \sum_{i\neq j} \tilde{T}_{ij}(x)  \bigg\}^2 \leq 2 \sum_{i\neq j}E\{\tilde{T}_{i j }(x) \}^2  = 2n(n-1)E \{ \tilde{T}_{12}(x)\}^2, \label{sumTij.var.3}
\ee
where $\tilde{T}_{ij}(x)=T_{ij}(x)-E\{ T_{ij}(x)|X_j,U_j\}$ for $T_{ij}(x)$ as in \eqref{def:Tij} is such that
\begin{align}
 &  E \{ \tilde{T}_{12}(x)\}^2   \nn \\
 & \leq  E \{ T_{12}(x)\}^2 \nn \\
&  = E  \left( L_{g_1}^2(U_2-U_1) \{\varphi(U_2)/\varphi(U_1)\}^2  \left[ \mathbf{e}_1\T \mathbf{S}_n^*(U_1)^{-1} \mathbf{w}\{ g_1^{-1}( U_2 - U_1 )  \} \right]^2 \right)  \nn \\
& \quad \times E\{ X^2  K^2_h(X-x) \}  E (|X| -\mu^+_0)^2  \nn \\
& = g_1^{-1} \int_{-1}^1 \int_{0}^1 \frac{\varphi^2(u)}{\varphi^2(u-s g_1 )} \{ \mathbf{e}_1\T \mathbf{S}_n^*(u - s g_1 )^{-1} \mathbf{w}( s ) L(s)  \}^2  f_U(u-s g_1 ) f_U(u) \,  d u \, d s \nn\\
& \quad  \times \var(|X|)  \,  h^{-1}\int_{-1}^1  (x+th)^2 K^2(t) f_X(x+th) \, dt \,.
\label{sumTij.var.4}
\end{align}
To bound the right-hand side of \eqref{sumTij.var.4}, we write the double integral there as $\int_{-1}^1\int_0^1= \int_{-1}^1 \big( \int_0^{2g_1} + \int_{2g_1}^{1-2g_1} + \int_{1-2g_1}^1 \big)$, so that the last three displays jointly imply $E \{ \tilde{T}_{12}(x) \}^2 \leq \con \, x^2 \allowbreak  f_X(x)   (g_1 h)^{-1}  $. (Here, for the integral $\int_{2g_1}^{1-2g_1}$ we used  \eqref{MiddInt1} and \eqref{MiddInt2}, whereas we bounded $\int_0^{2g_1}$ and $\int_{1-2g_1}^1$ by $g_1$ times a constant.) Thus in view of \eqref{sumTij.var.3}, 
\be
E \bigg\{ \sum_{i\neq j} \tilde{T}_{ij}(x) \bigg\}^2   \leq \con\,  x^2 f_X(x)   (g_1 h)^{-1}n^2 \label{sumTij.var.5}
\ee
for all sufficiently large $n$.

Finally, \eqref{sumTij.var.dec}, \eqref{sumTij.var.2} and \eqref{sumTij.var.5} together prove \eqref{sumTij.var.bound}.

\medskip
\noindent 
{\it Step 3. Proof of \eqref{stochastic.order.1'}}. Following the lines of the proof of \eqref{stochastic.order.1} in {\it Step 2}, it can be proved that
\begin{align*}
& (nh)^{-1} \sn K'_h(x-X_i)  (w_0-\hat{w}_X)(U_i)\,X_i \{ m(X_i) -m(x) \}\\
& \qquad  \qquad = g_1^2 (nh)^{-1}  \sn K'_h(x-X_i) X_i \{m(X_i)-m(x)\} \frac{B_n(U_i)}{\varphi(U_i)} + O_P(n^{-1/2}) + o_P  ( g_1^2  ),
\end{align*}
where
\begin{align*}
 & g_1^2 (nh)^{-1} \sn K'_h(x-X_i) X_i \{m(X_i)-m(x)\} \frac{B_n(U_i)}{\varphi(U_i)} \\
	 & = h^{-1}g_1^2 E[K'_h(x-X)X\{m(X)-m(x)\}] E \{ B_n(U)/\varphi(U)\}  +O_P \{ g_1^2(nh)^{-1/2}  \} ,
\end{align*}
with $E \{B_n(U)/\varphi(U)\}= \frac{1}{2}E\{\varphi''(U)/\varphi(U) \} \mu_{L,2}+O(g_1)$ and
\begin{align*}
	& E[K'_h(x-X)X\{m(X)-m(x)\}]  \\
	& = \int K'(t)(x-ht)\{m(x-ht)-m(x)\}f_X(x-ht)\, dt \\
	& = - \bigg\{ x m'(x) f_X(x) \int t K'(t)\, dt \bigg\} \, h + O(h^2) = x m'(x) f_X(x)\mu_{K,0} \, h +O(h^2).
\end{align*}
The last two displays jointly complete the proof of \eqref{stochastic.order.1'}. 
\end{proof}

\begin{lemma} \label{lm.1}
{\rm
Let $w_0$ and $\hat{w}_Y$ be as in (7.2). If Conditions (2.2) and (B1)--(B6) are satisfied, then for every $x\in [a, b] \subseteq I_X$,
\begin{align}
	& n^{-1}\sn K_h(X_i-x)( \hat{w}_Y -w_0)(U_i)\, Y_i  \nn \\
	& \qquad \qquad \qquad = - \tfrac{1}{2}m(x)f_X(x) E  \{\psi''(U)/\psi(U) \} \mu_{L,2}  \, g_2^2  + o_P(g_2^2) + O_P(n^{-1/2}). 
	 \label{stochastic.order.2}
\end{align}
}
\end{lemma}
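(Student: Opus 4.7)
The plan is to mimic the proof of Lemma~\ref{lm.0} line by line, with the substitutions $\varphi\mapsto\psi$, $g_1\mapsto g_2$, and $\hat{w}_X\mapsto \hat{w}_Y$, while carefully tracking a sign change. First, from (7.2), the analogue of (7.8), and $\hat{\mu}_Y^+ = \mu_Y^+ + O_P(n^{-1/2})$, one obtains the expansion
\begin{align*}
\hat{w}_Y(u) - w_0(u) = -\,\frac{\hat{\psi}_{0,\LL}^+(u)-\psi_0^+(u)}{\psi_0^+(u)} + O_P\{\delta_{0n}^2(g_2)\}
\end{align*}
uniformly in $u\in[0,1]$. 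The extra minus sign compared to \eqref{lm.0.1} is exactly what propagates to the minus in the claimed leading term.

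Next, following Step~1 of the proof of Lemma~\ref{lm.0} verbatim, I would derive the stochastic expansion
\begin{align*}
\hat{\psi}_{0,\LL}^+(u)-\psi_0^+(u) = \mu_Y^+ \tilde{B}_n(u)\, g_2^2 + \tilde{\Gamma}_2(u) + o_P(g_2^2)
\end{align*}
uniformly over $u\in[0,1]$, where $\tilde{B}_n$ has the same form as $B_n$ in \eqref{Bn.definition} but with $\psi''$ in place of $\varphi''$, and $\tilde{\Gamma}_2(u) = \mathbf{e}_1\T\{\mathbf{S}_n^*(u)\}^{-1} n^{-1}\sum_{j=1}^n L_{g_2}(U_j-u)\mathbf{v}_j(u)\psi(U_j)(Y_j-\mu_Y^+)$ is the linearised noise term, coming from the representation $\tilde{Y}_j-\psi_0(U_j)=\psi(U_j)(Y_j-\mu_Y^+)$ that follows from Condition (B2) via $E(Y\mid U)=\mu_Y^+$. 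No new ingredient beyond those behind \eqref{Gamma1.equiv}--\eqref{LL.remainder} is required.

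Substituting these two expansions into the left-hand side of \eqref{stochastic.order.2} splits the sum into a leading bias piece and a stochastic remainder. For the bias piece, the independence of $X$ and $U$ under Condition (B2) together with $E(Y\mid X)=m(X)$ yields
\begin{align*}
g_2^2\, n^{-1}\sum_{i=1}^n K_h(X_i-x) Y_i\, \frac{\tilde{B}_n(U_i)}{\psi(U_i)} = \tfrac{1}{2}\, m(x) f_X(x)\, E\{\psi''(U)/\psi(U)\}\,\mu_{L,2}\, g_2^2 + o_P(g_2^2),
\end{align*}
by the same calculation that produced \eqref{predictor.bias.1} (the only replacement being $E\{K_h(X-x)X\}\mapsto E\{K_h(X-x)m(X)\}=m(x)f_X(x)\mu_{K,0}+O(h^2)$); combined with the minus sign from the first display this is exactly the stated main term. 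For the stochastic piece, the substitution of $\tilde{\Gamma}_2(U_i)/\psi_0(U_i)$ produces a U-statistic-type double sum analogous to $n^{-2}\sum_{i\neq j}T_{ij}(x)$ in \eqref{predictor.bias.2}, but with $X_i$ replaced by $Y_i$ and $|X_j|-\mu_0^+$ replaced by $Y_j-\mu_Y^+$; the Hoeffding-type decomposition carried out in \eqref{sumTij.var.dec}--\eqref{sumTij.var.5} then yields the bound $O_P(n^{-1/2})+o_P(g_2^2)$ under Condition (B6).

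The main obstacle will be exactly this variance bound on the double sum. The presence of $Y_i=m(X_i)+\sigma(X_i)\varepsilon_i$ introduces an additional source of randomness relative to Lemma~\ref{lm.0}, but by conditioning first on $\{(X_i,U_i)\}_{i=1}^n$ and then invoking the boundedness/moment assumptions on $m$ and $\sigma$ from Conditions (B3)--(B5), one can reduce the variance computation to the argument already carried out there, modulo absolute constants involving $\|m\|_\infty$, $\|\sigma\|_\infty$ and the moments of $\varepsilon$.
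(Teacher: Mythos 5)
Your proposal is correct and follows the same route as the paper: expand $\hat{w}_Y - w_0$ via the stochastic expansion of $\hat{\psi}^+_{0,\LL} - \psi_0^+$ (a direct analogue of \eqref{LL.stochastic.expansion}), identify the leading bias contribution using $E\{K_h(X-x)Y\} = m(x)f_X(x) + O(h^2)$ and $E\{\tilde B_n(U)/\psi(U)\} = \tfrac12 E\{\psi''(U)/\psi(U)\}\mu_{L,2} + O(g_2)$, and bound the remaining U-statistic term by the Hoeffding-type variance argument of \eqref{sumTij.var.dec}--\eqref{sumTij.var.5}, yielding $O_P(n^{-1/2}) + o_P(g_2^2)$. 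The sign you flag is exactly what produces the minus sign in the stated leading term.

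One small correction: since $\hat{\psi}_{0,\LL}^+$ is the local linear estimator of $\psi_0^+(u) = E(|\tilde Y|\mid U=u) = m_0^+\,\psi(u)$ (cf.\ (7.1) and the convention $\mu_0^+ = E|X|$ used in Lemma~\ref{lm.0}), its linearised noise term should be built from $|\tilde Y_i| - E(|\tilde Y_i|\mid U_i) = \psi(U_i)(|Y_i| - m_0^+)$ rather than from $\tilde Y_j - \psi_0(U_j) = \psi(U_j)(Y_j - \mu_Y^+)$ as you wrote. The paper's $\Lambda_2(u)$ indeed uses $|Y_i| - m_0^+$. This changes nothing in the rate or the leading constant, because $E(|Y_j| - m_0^+) = 0$ still annihilates the relevant conditional expectations in the Hoeffding decomposition, but you should carry the absolute values to match the actual definition of the plus-superscripted estimators. (Also, the weight vector inside your $\tilde\Gamma_2$ should be indexed by $g_2$, i.e., $\mathbf{w}\{g_2^{-1}(U_j - u)\}$, not the $g_1$-indexed $\mathbf{v}_j(u)$ from the proof of Lemma~\ref{lm.0}; this is clearly a slip of notation.)

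Your remark about conditioning on $\{(X_i,U_i)\}_{i=1}^n$ before invoking moment conditions on $\varepsilon$ to control the additional randomness in $Y$ is a sensible way to reduce the variance computation to the one in Lemma~\ref{lm.0}; the paper does not spell this out, stating only that the computation is ``similar,'' so you are supplying a detail they leave implicit rather than departing from their method.
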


\begin{proof}[Proof of Lemma~\ref{lm.1}] 
Recall that under Condition (B6), $\delta^2_{0n}(g_2)=o(g_2^2)$. Arguments similar to those employed to derive \eqref{lm.0.1} can be used to show that
\begin{align}
& w_0 (u)-\hat w_Y(u)  = \frac{ \hat \psi^+_{0,\LL}(u) - \psi^+_0(u)}{\psi_0^+(u)} +  o_P(g_2^2) + O_P(n^{-1/2})     \label{lm.1.1}
\end{align}
uniformly over $u\in [0,1]$, where $\psi_0^+=m_0^+ \, \psi$ is as in (7.1). Similarly to \eqref{LL.stochastic.expansion}, the local linear estimator $\hat{\psi}^+_{0,\LL}$ of $\psi^+_0$ (see definition below (3.3)) admits the following stochastic expansion:
\begin{align}
 \hat \psi^+_{0,\LL}(u) - \psi^+_0(u)= m_0^+ \tilde{B}_{n}(u) g_2^2 + \Lambda_2(u)   + o_P(g_2^2)   \label{ll.exp.2}
\end{align}
uniformly over $u\in [0,1]$, where $\tilde{B}_n$ is as in \eqref{Bn.definition} with $\varphi$ replaced by $\psi$ and
\begin{align*}
	\Lambda_2(u) \equiv  \mathbf{e}_1\T \{ \mathbf{S}_n^*(u) \}^{-1}n^{-1} \sn  L_{g_2}(U_i-u ) \mathbf{w}\{ g_2^{-1}(U_i-u) \} \psi(U_i)( |Y_i| - m^+_0) .
\end{align*}

In view of \eqref{lm.1.1} and \eqref{ll.exp.2}, the arguments that we used earlier to derive \eqref{predictor.bias.dec}, \eqref{predictor.bias.1} and \eqref{predictor.bias.2} can be used to prove that 
\begin{align}
	& n^{-1} \sn K_h(X_i-x)\{w_0 -\hat w_Y(U_i)\} Y_i  \nn  \\ 
	  & = (m_0^+ n^2)^{-1} \sum_{i\neq j} K_h(X_i-x)L_{g_2}(U_j-U_i ) Y_i( | Y_j| - m^+_0) \frac{ \psi(U_j)}{\psi(U_i)} \mathbf{e}_1\T \{ \mathbf{S}_n^*(U_i) \}^{-1} \mathbf{w}\{ g_2^{-1}(U_j-U_i) \} \nn  \\ 
	 & \quad  +   E \{ K_h(X-x)Y \}  E  \{ \tilde{B}_{n}(U)/\psi(U) \} \, g_2^2 +  o_P(g_2^2) + O_P(n^{-1/2})  . \label{lm.1.2}
\end{align}
From there, calculations similar to those used in the proof of  Lemma~\ref{lm.0} show that the first term on the right-hand side of \eqref{lm.1.2} is of order $O_P(n^{-1/2}) + o_P(g_2^2)$, and thus is negligible. Similarly to \eqref{v.exp.1}, $E \{ K_h(X-x)Y \} =   m(x) f_X(x) + O(h^2)$. Further, by \eqref{Bn.definition} with $\varphi$ replaced by $\psi$ we have $E  \{ \tilde{B}_{n}(U)/\psi(U) \}=\frac{1}{2} E\{\psi''(U)/\psi(U)\} \mu_{L,2}+ O(g_2)$. Putting the above calculations together completes the proof of \eqref{stochastic.order.2}.
\end{proof}